\newcommand{\cA}{\mathcal A}
\newcommand{\cH}{\mathcal H}
\newcommand{\cI}{\mathcal I}
\newcommand{\bC}{\mathbb C}
\newcommand{\bB}{\mathbb B}
\newcommand{\cB}{\mathcal B}
\newcommand{\cK}{\mathcal K}
\newcommand{\cE}{\mathcal E}
\newcommand{\bD}{\mathbb D}
\newcommand{\bT}{\mathbb T}
\newcommand{\cS}{\mathcal S}
\newcommand{\bN}{\mathbb N}
\newcommand{\cL}{\mathcal L}
\newcommand{\cM}{\mathcal M}
\DeclareMathOperator{\Mult}{Mult}
\newcommand{\ol}{\overline }
\renewcommand{\phi}{\varphi}
\newtheorem{lemma}{Lemma}[section]
\newtheorem{theorem}[lemma]{Theorem}
\newtheorem{proposition}[lemma]{Proposition}
\newtheorem{corollary}[lemma]{Corollary}
\theoremstyle{definition}
\newtheorem{remark}[lemma]{Remark}
\newtheorem{question}{Question}
\newtheorem{example}[lemma]{Example}
\renewcommand{\MR}[1]{} 
\begin{document}

\author[R. Clou\^atre]{Rapha\"el Clou\^atre}
\address{Department of Mathematics, University of Manitoba, Winnipeg, Manitoba, Canada R3T 2N2}
\email{raphael.clouatre@umanitoba.ca\vspace{-2ex}}
\thanks{The first author was partially supported by an FQRNT postdoctoral fellowship and an NSERC Discovery Grant.}

\author[M. Hartz]{Michael Hartz}
\address{Department of Mathematics, Washington University in St. Louis, One Brookings Drive,
St. Louis, MO 63130, USA}
\email{mphartz@wustl.edu}
\thanks{The second author was partially supported by an Ontario Trillium Scholarship and a Feodor Lynen Fellowship.}

\title[Multiplier algebras of complete Nevanlinna-Pick spaces]{Multiplier algebras of complete Nevanlinna-Pick spaces: dilations, boundary representations and hyperrigidity}
\date{\today}
\subjclass[2010]{Primary 47L55; Secondary 46E22, 47A13}
\keywords{Multiplier algebra, Nevanlinna-Pick space, dilation, coextension, boundary representation,
hyperrigidity, homogeneous ideals}

\begin{abstract}
We study reproducing kernel Hilbert spaces on the unit ball with the complete Nevanlinna-Pick property through the representation theory of their algebras of multipliers. We give a complete description of the representations in terms of the reproducing kernels. While representations always dilate to $*$-representations
  of the ambient $C^*$-algebra, we show that in our setting we automatically obtain coextensions.
  In fact, we show that in many cases, this phenomenon characterizes
  the complete Nevanlinna-Pick property.
  We also deduce operator theoretic dilation results which are in the spirit of
  work of Agler and several other authors.
  Moreover, we identify all boundary representations,
  compute the $C^*$-envelopes and determine hyperrigidity for certain analogues of the disc algebra.
  Finally, we extend these results to spaces of functions
  on homogeneous subvarieties of the ball.
\end{abstract}

\maketitle
 \section{Introduction}\label{S:intro}

In this work, we study representations of multiplier algebras of reproducing kernel
Hilbert spaces with the complete Nevanlinna-Pick property.
Our motivating example is the classical disc algebra $A(\bD)$,
which consists of those analytic functions on the open unit disc $\bD\subset \bC$ which extend to continuous functions on $\ol{\bD}$.
This algebra plays a pivotal role in the theory of contractions on Hilbert space,
see \cite{nagy2010} for a classical treatment. To make this role explicit, we must consider $A(\bD)$ in relation to the Hardy space $H^2(\bD)$, which is the Hilbert space of analytic functions on $\bD$ whose Taylor coefficients at the origin
are square summable. Alternatively, the Hardy space may be described as the reproducing kernel
Hilbert space on the unit disc with kernel
\[
k(z,w)=\frac{1}{1-z \ol{w}} \quad (z,w\in\bD).
\]
Then, for $\phi\in A(\bD)$ we denote by $M_\phi$ the corresponding multiplication operator on $H^2(\bD)$. This identification allows us to regard $A(\bD)$ as an algebra of bounded linear operators on $H^2(\bD)$.

The (matrix-valued version of) von Neumann's inequality (see, for example, \cite[Corollary 3.12]{paulsen2002})
shows that unital (completely)
contractive representations of $A(\bD)$
are in one-to-one correspondence with contractions on a Hilbert space. Specifically,
every contraction $T \in B(\cH)$ gives rise to a unital completely contractive
representation
\[
A(\bD)\to B(\cH), \quad
  p \mapsto p(T) \quad (p \in \bC[z]).
\]
Abstract dilation theory in the form of Arveson's dilation theorem \cite{arveson1969}  (see also
\cite[Corollary 7.7]{paulsen2002}) shows that this
representation dilates to a $*$-representation $\pi$ of $C^*(A(\bD)) \subset B(H^2(\bD))$, in the sense that
\[
  p(T)=P_{\cH}\pi(M_p)|_{\cH} \quad (p \in \bC[z]).
\]
The $C^*$-algebra $C^*(A(\bD))$ is known as the Toeplitz algebra, whose representations are well understood.
Indeed, $M_z$ is the unilateral shift, hence $\pi(M_z)$ is an isometry.
The Wold decomposition therefore implies that
$\pi(M_z)$ is unitarily equivalent to $M_z^{(\alpha)}\oplus U$ for some cardinal $\alpha$ and some unitary operator $U$.
In particular, we see that every contraction $T$ dilates to an operator of the form $M_z^{(\alpha)}\oplus U$.

However, a stronger statement is true.
Namely, one version of Sz.-Nagy's dilation theorem \cite[Theorem 4.1]{nagy2010}, combined with the Wold decomposition,
asserts that every contraction $T$ does not merely dilate, but in fact coextends
to an operator of the form $M_z^{(\alpha)} \oplus U$. That is, the dilation has the additional property that $\cH$ is invariant
under $(M_z^{(\alpha)} \oplus U)^*$. In representation theoretic terms, this means
that every unital completely contractive representation $\rho: A(\bD) \to B(\cH)$ coextends
to a $*$-representation $\pi$ of $C^*(A(\bD))$, in the sense that
\begin{equation*}
  \rho(M_p)^* = \pi(M_p)^* \big|_{\cH} \quad (p \in \bC[z]). 
\end{equation*}
Thus, when viewing $A(\bD)$ as a subalgebra of $B(H^2(\bD))$, we conclude that its representations have a finer property than what is usually expected from general dilation theory.

The multivariate counterpart to this discussion takes place in the so-called Drury-Arveson space $H^2_d$ on the open unit ball $\bB_d\subset \bC^d$ \cite{drury1978,muller1993,arveson1998}, see also the survey \cite{Shalit13}.
This is the reproducing kernel Hilbert space on $\bB_d$ with kernel
\[
k(z,w)=\frac{1}{1-\langle z,w\rangle_{\bC^d}} \quad (z,w\in \bB_d).
\]
Let $\cA_d$ be the norm closed subalgebra of $B(H^2_d)$ generated by the multiplication operators $M_{z_1},\ldots,M_{z_d}$.
This algebra plays the role of the disc algebra in the study of \emph{commuting row contractions},
that is, tuples $T = (T_1,\ldots,T_d)$ of commuting operators such that $\sum_{j=1}^d T_j T_j^* \le I$.
As before, Arveson's dilation theorem shows that every unital
completely contractive representation of $\cA_d$ dilates to a $*$-representation $\pi$ of
the Toeplitz algebra $C^*(\cA_d) \subset B(H^2_d)$ (and one can then use the known representation theory
of the Toeplitz algebra to deduce an operator theoretic dilation result, see \cite[Proposition 8.3]{arveson1998}).
Once again, however, something stronger is true. A dilation theorem due to M\"uller-Vasilescu \cite{muller1993}
and Arveson \cite{arveson1998} for commuting row contractions implies that unital completely contractive representations
of $\cA_d$ in fact coextend to $*$-representations of $C^*(\cA_d)$.

Motivated by these examples, we ask:
\begin{question}\label{Q:intro1}
  Let $\cA$ be a unital algebra of multipliers of a reproducing kernel Hilbert space, and let $\rho$ be a unital
  completely contractive representation of $\cA$. Does $\rho$ coextend to a $*$-representation
  of $C^*(\cA)$?
\end{question}

The discussion above shows that this question always has a positive answer for $A(\bD) \subset B(H^2(\bD))$
and, more generally, for $\cA_d \subset B(H^2_d)$.

We saw above that the existence of an isometric dilation for a single contraction $T\in B(\cH)$
could be deduced from the inclusions
\[
A(\bD)\subset C^*(A(\bD)) \subset B(H^2(\bD)).
\]
A different version of Sz.-Nagy's dilation \cite[Theorem I.4.2]{nagy2010} can be obtained by embedding $A(\bD)$ inside a different $C^*$-algebra.
By the maximum modulus principle, if we denote by $\bT$ the unit circle, then we may regard $A( \bD)$ as a subalgebra of $C(\bT)$, via restriction to $\bT$.
In fact, the $C^*$-algebra $C(\bT)$ is the smallest $C^*$-algebra which contains
a copy of $A(\bD)$ in a precise sense:
it is the $C^*$-envelope of $A(\bD)$.
An application of Arveson's dilation theorem in this setting yields a $*$-representation $\pi$ of $C(\bT)$ such that
\[
  p(T)=P_{\cH}\pi(p)|_{\cH} \quad (p \in \bC[z]).
\]
Since the identity function $z$ is unitary in $C(\bT)$, we thus obtain a unitary dilation of $T$.
It is then natural to wonder whether a similar approach is possible for commuting row contractions
and the algebra $\cA_d$.
Unfortunately this is not the case, as the $C^*$-algebra $C^*(\cA_d)$ considered above is the
$C^*$-envelope of $\cA_d$. In fact, a stronger statement is true: the identity representation
is a boundary representation for $\cA_d$ \cite[Lemma 7.13]{arveson1998} (we will review
these notions in Subsection \ref{S:prelimuep}).

This motivates our second main question:

\begin{question} \label{Q:intro2}
Given an algebra of multipliers $\cA$, what is its $C^*$-envelope? What are its boundary representations?
 \end{question}

In particular, we will study Questions \ref{Q:intro1} and \ref{Q:intro2}
for algebras of multipliers of reproducing kernel Hilbert spaces with the complete Nevanlinna-Pick property.
These spaces mirror some of the fine structure of the classical Hardy space. We will
recall the precise definition in Subsection \ref{ss:kernels}. For now, we only mention that typical examples
are the Hardy space itself, the Drury-Arveson space and also the Dirichlet space.

In the context of Question \ref{Q:intro1}, we will show in Sections \ref{S:incl} and \ref{S:coextue}
that the ``automatic coextension property'',
which we witnessed for $A(\bD)$ and $\cA_d$ above, holds for many algebras of multipliers
of complete Nevanlinna-Pick spaces.
In fact, we will see in Section \ref{sec:NP_nec} that in many cases of interest,
this phenomenon characterizes those spaces with the complete Nevanlinna-Pick property.
As for Question \ref{Q:intro2}, we show in Section \ref{S:AH} that
the fact that the $C^*$-envelope of $\cA_d$ is $C^*(\cA_d) \subset B(H^2_d)$ is the typical behaviour
for many complete Nevanlinna-Pick spaces, and that
the Hardy space is an exception in that regard.

We now proceed to state our main results more precisely and to explain how the paper is organized.
Section \ref{S:prelimRKHS} introduces the necessary background material and preliminary results about reproducing kernel Hilbert spaces.
In particular, we recall the definition of unitarily invariant complete Nevanlinna-Pick spaces on the unit ball.
To such a space $\cH$, we associate an algebra of multipliers $A(\cH)$, which is the norm closure
of the polynomial multipliers and serves as a
generalization of the algebras $A(\bD)$ and $\cA_d$ mentioned above.
Most of our results deal with these algebras. In Section \ref{sec:prelim_dilations},
we recall the necessary background material about dilations.

In Section \ref{S:incl}, we study a very concrete type of representations,
namely inclusions of multiplier algebras of two reproducing kernel Hilbert spaces on the same set.
In particular, we show the following result (Theorem \ref{T:equiv}).

\begin{theorem}\label{T:main1}
Let $\cH_1$ and $\cH_2$ be two reproducing kernel Hilbert spaces on the same set $X$
with reproducing kernels $k_1,k_2$, respectively. Suppose that $\cH_1$
is an irreducible complete Nevanlinna-Pick space. Then the following assertions are equivalent.
\begin{enumerate}[label=\normalfont{(\roman*)}]
  \item The inclusion $\Mult(\cH_1) \hookrightarrow \Mult(\cH_2)$ is completely contractive.
  \item The inclusion $\Mult(\cH_1)\hookrightarrow \Mult(\cH_2)$ coextends
    to a $*$-representation of $B(\cH_1)$.
  \item $\Mult(\cH_1) \subset \Mult(\cH_2)$ and there exists an auxiliary
    Hilbert space $\cE$ and an isometry $V: \cH_2 \to \cH_1 \otimes \cE$ such that
    $V^*(M_{\phi}^{\cH_1}\otimes I_{\cE})=M_{\phi}^{\cH_2}V^*$
    for every $\varphi \in \Mult(\cH_1)$.
  \item $k_2/k_1$ is positive semi-definite on $X$.
\end{enumerate}
\end{theorem}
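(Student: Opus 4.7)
The plan is to establish the cycle of implications $(\mathrm{iv}) \Rightarrow (\mathrm{iii}) \Rightarrow (\mathrm{ii}) \Rightarrow (\mathrm{i}) \Rightarrow (\mathrm{iv})$. The complete Nevanlinna--Pick hypothesis on $\cH_1$ will only be needed for the final implication; the others are formal consequences of reproducing kernel and dilation theory.

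For $(\mathrm{iv}) \Rightarrow (\mathrm{iii})$, set $q := k_2/k_1$, a well-defined positive semi-definite kernel since irreducibility forces $k_1$ to be nowhere zero on $X$. The Kolmogorov factorization yields a Hilbert space $\cE$ together with vectors $\{\xi_y\}_{y \in X} \subset \cE$ satisfying $\langle \xi_y,\xi_x\rangle_\cE = q(x,y)$. I would define $V\colon \cH_2 \to \cH_1 \otimes \cE$ on the dense span of reproducing kernels by $V k_2(\cdot,y) = k_1(\cdot,y) \otimes \xi_y$; a direct inner product computation using the identity $k_2 = q \cdot k_1$ shows that $V$ is isometric. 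A check on reproducing kernels (where both sides act as scalar multiplication by $\ol{\phi(y)}$) then verifies the intertwining $(M_\phi^{\cH_1\,*} \otimes I_\cE) V = V M_\phi^{\cH_2\,*}$ for every $\phi \in \Mult(\cH_1)$. The inclusion $\Mult(\cH_1) \subset \Mult(\cH_2)$ comes as a byproduct, since the operator $V^*(M_\phi^{\cH_1} \otimes I_\cE) V$ on $\cH_2$ has adjoint acting on each $k_2(\cdot,y)$ by the eigenvalue $\ol{\phi(y)}$ and therefore coincides with multiplication by $\phi$.

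For $(\mathrm{iii}) \Rightarrow (\mathrm{ii})$, the amplification $\pi\colon B(\cH_1) \to B(\cH_1 \otimes \cE)$ defined by $T \mapsto T \otimes I_\cE$ is a $*$-representation, and the intertwining relation says exactly that $V(\cH_2)$ is coinvariant under $\pi(\Mult(\cH_1))$, exhibiting $\pi$ as the required coextension. The step $(\mathrm{ii}) \Rightarrow (\mathrm{i})$ is automatic: the compression of any unital $*$-representation to a coinvariant subspace is unital completely contractive.

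The heart of the theorem, and the step where the complete Nevanlinna--Pick property of $\cH_1$ is essential, is $(\mathrm{i}) \Rightarrow (\mathrm{iv})$. By the McCullough--Quiggin / Agler--McCarthy characterization of irreducible complete Nevanlinna--Pick kernels recalled in the preliminaries, one may write
\begin{equation*}
  1 - \frac{1}{k_1(x,y)} = \sum_{i} b_i(x)\,\ol{b_i(y)}
\end{equation*}
for a (possibly countable) family of functions $b_i$ on $X$. Multiplying through by $k_1$ gives $k_1(x,y)\bigl(1 - \sum_i b_i(x)\ol{b_i(y)}\bigr) = 1$, so the row $B = (b_1,b_2,\ldots)$ is a contractive $\ell^2$-valued row multiplier of $\cH_1$. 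Assumption (i) then transfers this property to $\cH_2$; the main technical point is passing from complete contractivity on finite matrices to row contractivity for a potentially infinite family $\{b_i\}$, which I would handle by truncating $B$ to its first $n$ entries, applying complete contractivity to the resulting $1 \times n$ block, and taking a weak limit of the corresponding Pick kernels. The upshot is that
\begin{equation*}
  k_2(x,y)\Bigl(1 - \sum_i b_i(x)\,\ol{b_i(y)}\Bigr) \;=\; \frac{k_2(x,y)}{k_1(x,y)}
\end{equation*}
is positive semi-definite on $X$, which is (iv).
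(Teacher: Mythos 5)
Your cycle (iv)\,$\Rightarrow$\,(iii)\,$\Rightarrow$\,(ii)\,$\Rightarrow$\,(i)\,$\Rightarrow$\,(iv) is exactly the paper's, and the first three implications coincide with the paper's argument in detail: the Kolmogorov factorization of $k_2/k_1$ and the isometry $V k_2(\cdot,y) = k_1(\cdot,y)\otimes \xi_y$ is precisely the proof of Theorem \ref{T:douglas}, the amplification $T \mapsto T \otimes I_{\cE}$ is the paper's (iii)\,$\Rightarrow$\,(ii), and (ii)\,$\Rightarrow$\,(i) is trivial in both treatments. Where you differ is (i)\,$\Rightarrow$\,(iv): the paper reduces to a normalized kernel and then simply cites \cite[Corollary 3.5]{hartz2015isom}, whereas you supply a self-contained argument via the realization $1 - 1/k_1(x,y) = \sum_i b_i(x)\ol{b_i(y)}$, the observation that the row $B=(b_1,b_2,\dots)$ is then a contractive row multiplier of $\cH_1$ (since $k_1\cdot(1-\sum_i b_i\ol{b_i})=1$), and a truncation-plus-limit argument to transfer row contractivity to $\cH_2$ from complete contractivity of the inclusion. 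That argument is sound (sub-rows of a contractive row multiplier are contractive, rectangular blocks are handled by padding, and a pointwise limit of positive semi-definite kernels is positive semi-definite), and it makes explicit the content of the external result the paper invokes.

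The one genuine issue is at the very start of that step: the representation $1 - 1/k_1 = \sum_i b_i\ol{b_i}$ is \emph{not} available for an arbitrary irreducible complete Nevanlinna-Pick kernel; it requires $k_1$ to be normalized at a point. For a rescaled kernel $\delta(x)k(x,y)\ol{\delta(y)}$ with non-constant $\delta$ (e.g.\ $e^{z}\,(1-z\ol w)^{-1}\,e^{\ol w}$ on $\bD$), the function $1-1/k_1$ fails to be positive semi-definite, so as written your factorization step does not apply. The repair is the paper's first sentence in this implication: both hypothesis (i) and conclusion (iv) are invariant under rescaling $k_1 \mapsto \delta(x)k_1(x,y)\ol{\delta(y)}$ (rescaling does not change the multiplier algebra or multiplier norms, and $k_2/k_1$ changes by the Schur factor $\delta(x)^{-1}\ol{\delta(y)}^{-1}$, which preserves and reflects positivity), so one may assume $k_1$ is normalized at a point before applying the McCullough--Quiggin/Agler--McCarthy representation. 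With that one-line reduction inserted, your proof is complete and is essentially a fleshed-out version of the paper's.
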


This result is in fact a combination of previously known theorems of
Douglas-Misra-Sarkar \cite{douglas2012dilation},
Kumari-Sarkar-Sarkar \cite{KSS+16} and a result from \cite{hartz2015isom},
but it appears to be
new in this form. Moreover, it shows that Question \ref{Q:intro1} has a positive answer for these particular
representations.
In fact, we show that the validity of the implication (i) $\Rightarrow$ (iv) characterizes those
reproducing kernel Hilbert spaces with the complete Nevanlinna-Pick property (Theorem \ref{T:NPchar}),
which gives the first indication that this property indeed plays a key role.

In Section \ref{S:coextue}, we consider representations of the algebras $A(\cH)$ associated to
unitarily invariant complete Nevanlinna-Pick spaces on the unit ball.
We describe all unital completely contractive representations of these algebras
and give a positive answer to Question \ref{Q:intro1} in this case.
To achieve this, we will make use of a general machinery for studying coextensions which
has been developed over the years by Agler \cite{agler1982}, Agler-McCarthy \cite{AM00a} and Ambrozie-Engli\v{s}-M\"uller \cite{ambrozie2002}.
The main result of that section is summarized in the following theorem, which is a combination of
Theorem
\ref{T:A_H_representation} and Corollary \ref{cor:automatic_coextension}.

\begin{theorem}\label{T:main3}
  Let $\cH$ be a unitarily invariant complete Nevanlinna-Pick space on the unit ball
  with reproducing kernel $k$. Let $T=(T_1,\ldots,T_d)$ be a commuting tuple of operators on a Hilbert space $\cE$.
Then, the following assertions are equivalent.
      
\begin{enumerate}[label=\normalfont{(\roman*)}]
\item The tuple $T$ satisfies $1/k(T,T^*)\geq 0$.

\item  There exists a unital completely contractive homomorphism 
\[
\rho: A(\cH) \to B(\cE)
\]
such that $\rho(M_z) = T$. 
 \end{enumerate}
 In this case, the map $\rho$ is necessarily unique and coextends to a $*$-representation of $C^*(A(\cH))$.
 In particular, every unital completely contractive representation of $A(\cH)$ coextends
 to a $*$-representation of $C^*(A(\cH))$.
\end{theorem}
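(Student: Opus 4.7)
My plan is to prove the two implications separately, extracting uniqueness and the coextension statement in the process; the Agler--Ambrozie--Engli\v{s}--M\"uller machinery will be invoked only for the harder direction. For (ii) $\Rightarrow$ (i), I would argue via Kadison--Schwarz. First, extend $\rho$ to a unital completely positive map $\tilde\rho: C^*(A(\cH)) \to B(\cE)$ by Arveson's extension theorem. Because $\cH$ is a unitarily invariant complete Nevanlinna-Pick space on the ball, $1/k$ admits a decomposition $1/k(z,w) = 1 - \sum_i u_i(z) \overline{u_i(w)}$ with the $u_i$ polynomial multipliers and suitable convergence. On $\cH$ itself one has the identity $I - \sum_i M_{u_i} M_{u_i}^* \ge 0$; in fact this operator is the rank-one projection onto the constants. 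Applying $\tilde\rho$ and using the Schwarz-type inequality $\tilde\rho(M_{u_i} M_{u_i}^*) \ge \tilde\rho(M_{u_i}) \tilde\rho(M_{u_i})^* = u_i(T) u_i(T)^*$ (valid because $M_{u_i} \in A(\cH)$) then yields $0 \le \tilde\rho(I - \sum_i M_{u_i} M_{u_i}^*) \le I - \sum_i u_i(T) u_i(T)^* = 1/k(T,T^*)$, which is (i).

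For (i) $\Rightarrow$ (ii) I would invoke the coextension theorem of Agler--Ambrozie--Engli\v{s}--M\"uller in its complete Nevanlinna-Pick version: the hypothesis $1/k(T,T^*) \ge 0$ furnishes an auxiliary Hilbert space $\mathcal{F}$ (built from the positive operator $1/k(T,T^*)^{1/2}$) together with an isometry $V: \cE \to \cH \otimes \mathcal{F}$ satisfying $(M_{z_j}^{\cH} \otimes I_{\mathcal{F}})^* V = V T_j^*$ for each $j$. Letting $\pi: C^*(A(\cH)) \to B(\cH \otimes \mathcal{F})$ be the amplification $a \mapsto a \otimes I_{\mathcal{F}}$, the intertwining relation is exactly the statement that the range of $V$ is $\pi(a)^*$-invariant for every $a \in A(\cH)$. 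The formula $\rho(a) := V^* \pi(a) V$ therefore defines a unital completely contractive homomorphism with $\rho(M_{z_j}) = T_j$, and by construction $\rho$ coextends to $\pi$. This simultaneously establishes (ii) and the coextension statement of the theorem.

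Uniqueness of $\rho$ is immediate, since polynomial multipliers are dense in $A(\cH)$ by definition, so any unital homomorphism with $\rho(M_z) = T$ is determined on polynomials by $\rho(p) = p(T)$ and then on all of $A(\cH)$ by continuity. The final assertion --- that every unital completely contractive representation $\sigma$ of $A(\cH)$ coextends --- then follows by combining the two directions: set $T := \sigma(M_z)$, deduce $1/k(T,T^*) \ge 0$ from (ii) $\Rightarrow$ (i), and apply (i) $\Rightarrow$ (ii) to obtain a coextending representation which by uniqueness must equal $\sigma$. The main obstacle is the (i) $\Rightarrow$ (ii) direction, specifically the explicit construction of the intertwining isometry $V$: the complete Nevanlinna-Pick expansion of $1/k$ is precisely what lets the coextension live inside $\cH \otimes \mathcal{F}$ rather than in some larger ambient space, and once $V$ is in hand, multiplicativity, complete contractivity, and the coextension property all flow from the intertwining relation.
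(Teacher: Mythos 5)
Your treatment of (ii) $\Rightarrow$ (i), of uniqueness, and of the final deduction (apply (ii) $\Rightarrow$ (i) to $\sigma(M_z)$ and then (i) $\Rightarrow$ (ii)) is correct. The Schwarz-inequality argument is a legitimate alternative to the paper's route (the paper instead observes that $1/k(T,T^*)\geq 0$ means the row $(\psi_{k,\alpha}(T))_\alpha$ is a row contraction, a condition preserved by completely contractive maps); just note that since $\tilde\rho$ need not be SOT-continuous you should run the estimate over finite partial sums, i.e.\ $\sum_{i\in F}u_i(T)u_i(T)^*\leq \tilde\rho\bigl(\sum_{i\in F}M_{u_i}M_{u_i}^*\bigr)\leq I$ for every finite $F$, which is exactly the paper's definition of $1/k(T,T^*)\geq 0$.

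The genuine gap is in (i) $\Rightarrow$ (ii). The Ambrozie--Engli\v{s}--M\"uller theorem you invoke does \emph{not} hold under the sole hypothesis $1/k(T,T^*)\geq 0$: it requires in addition that $\sigma(T)\subset\bB_d$ (or that $1/k$ be a polynomial, or a purity condition), and without that there is in general no isometry $V:\cE\to\cH\otimes\mathcal F$ with $(M_{z_j}^*\otimes I)V=VT_j^*$. A concrete counterexample is a spherical unitary $U$ acting against $\cH=H^2_d$: then $1/k(U,U^*)=I-\sum_j U_jU_j^*=0\geq 0$, yet such an intertwining isometry cannot exist, since $\sum_{|\alpha|=n}\binom{n}{\alpha}M_z^\alpha(M_z^*)^\alpha\to 0$ strongly while the corresponding sums for $U$ equal $I$ for every $n$; for the same reason the coextension cannot be taken of the amplification form $a\mapsto a\otimes I_{\mathcal F}$, so "by construction $\rho$ coextends to $\pi$" also overreaches. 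Your construction of $V$ from $1/k(T,T^*)^{1/2}$ produces an isometry only for pure tuples. The paper closes exactly this gap: from (i) and Lemma \ref{L:spectrum} one only gets $\sigma(T)\subset\ol{\bB_d}$, so one applies the AEM theorem to $rT$ for $0<r<1$ (where $\sigma(rT)\subset\bB_d$ and $1/k(rT,rT^*)\geq 0$ because the $b_n$ are non-negative), obtains unital completely positive maps $\psi_r$ on $C^*(A(\cH))$ with $\psi_r(M_{z_j})=rT_j$ and $\psi_r(M_{z_j}M_{z_j}^*)=r^2T_jT_j^*$, takes a BW-cluster point $\psi$, and then uses Agler's hereditary lemma (Lemma \ref{L:hered_coinvariant}) on a Stinespring dilation of $\psi$ to conclude that $\cE$ is coinvariant, which yields both the homomorphism $\rho$ and its $C^*(A(\cH))$-coextension by a general $*$-representation. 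Without this approximation and limiting step your argument fails precisely for non-pure tuples, which are the tuples responsible for the spherical unitary summand in Theorem \ref{T:1/k_dilation}.
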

The precise definition of $1/k(T,T^*)$ can be found in Section \ref{S:coextue}.
For now, we simply mention that if $\cH = H^2_d$, then 
\[
1/k(T,T^*) = I - \sum_{j=1}^d T_j T_j^*
\]
so $1/k(T,T^*) \ge 0$ if and only if $T$ is a row contraction.

As a consequence, we establish the following dilation
theorem (Theorem \ref{T:1/k_dilation}),
which is very much in the spirit of \cite{agler1982,AM00a,ambrozie2002} and in addition
contains an automatic coextension statement.
It extends the M\"uller-Vasilescu and Arveson dilation theorem \cite{muller1993,arveson1998}.
Recall that a spherical unitary is a tuple
of commuting normal operators whose joint spectrum is contained in the unit sphere.

\begin{theorem}
  Let $\cH$ be a unitarily invariant complete Nevanlinna-Pick space on $\bB_d$
  with reproducing kernel
\[
k(z,w)=\sum_{n=0}^\infty a_n \langle z,w \rangle^n,
\]
where $a_0 = 1$ and $a_n > 0$ for all $n \in \mathbb N$. Suppose that $\lim_{n \to \infty} a_n / a_{n+1} = 1$.
  Let $T = (T_1,\ldots,T_d)$ be a commuting
  tuple of operators on a Hilbert space. Denote by $M_z = (M_{z_1},\ldots,M_{z_d})$ the tuple of operators of multiplication by the coordinate functions. Then the following assertions
  are equivalent.
  \begin{enumerate}[label=\normalfont{(\roman*)}]
\item The tuple $T$ satisfies $1/k(T,T^*) \ge 0$.
    \item
  The tuple $T$ coextends
  to $M_z^{(\kappa)} \oplus U$ for some cardinal $\kappa$
  and a spherical unitary $U$.
\item
  The tuple $T$ dilates
  to $M_z^{(\kappa)} \oplus U$ for some cardinal $\kappa$
  and a spherical unitary $U$.
  \end{enumerate}
\end{theorem}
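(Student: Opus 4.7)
The plan is to prove the three equivalences via the cycle (i) $\Rightarrow$ (ii) $\Rightarrow$ (iii) $\Rightarrow$ (i). The implication (ii) $\Rightarrow$ (iii) is immediate, since every coextension is in particular a dilation.

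For (i) $\Rightarrow$ (ii), I would first apply Theorem \ref{T:main3} to extract from $1/k(T,T^*) \ge 0$ a unital completely contractive homomorphism $\rho: A(\cH) \to B(\cE)$ with $\rho(M_z) = T$, together with a coextension to a $*$-representation $\pi$ of $C^*(A(\cH))$ on a Hilbert space $\cH_\pi \supseteq \cE$. The point of the hypothesis $\lim_n a_n/a_{n+1} = 1$ is then to force a rigid structure on $*$-representations of $C^*(A(\cH))$: this ratio condition implies that the tuple $M_z$ is essentially normal, so that the compact operators $\cK(\cH)$ lie in $C^*(A(\cH))$ and the quotient $C^*(A(\cH))/\cK(\cH)$ is a commutative $C^*$-algebra whose maximal ideal space is homeomorphic to the unit sphere $\partial \bB_d$. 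Combining the uniqueness up to ampliation of the irreducible representation of $\cK(\cH)$ with the spectral theorem for the quotient, $\pi$ decomposes as $\pi_1 \oplus \pi_2$, with $\pi_1(M_z) \cong M_z^{(\kappa)}$ for some cardinal $\kappa$ and $\pi_2(M_z) = U$ a spherical unitary. Transporting the coextension property of $\pi$ back to the tuple $M_z$ yields assertion (ii).

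For (iii) $\Rightarrow$ (i), suppose $T$ is a dilation of $S = M_z^{(\kappa)} \oplus U$ on $\cH_S \supseteq \cE$, so that $\cE$ is semi-invariant for $S$ in Sarason's sense. Compression then produces a unital homomorphism $p \mapsto p(T) = P_\cE p(S)|_\cE$ on polynomials, and it suffices to show that this extends to a unital completely contractive representation of $A(\cH)$ on $\cE$, since Theorem \ref{T:main3} will then deliver (i). The $M_z^{(\kappa)}$ summand is handled directly, as the representation $p \mapsto p(M_z)^{(\kappa)}$ is an ampliation of the canonical completely isometric inclusion $A(\cH) \hookrightarrow B(\cH)$. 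For the spherical unitary summand, I would invoke (again using $\lim a_n/a_{n+1} = 1$) that $A(\cH)$ embeds completely contractively into $C(\partial \bB_d)$, so that the evaluation $p \mapsto p(U)$ factors as $A(\cH) \to C(\partial \bB_d) \to B(\cH_U)$ with the second map a $*$-representation given by the joint functional calculus for $U$. Taking direct sums yields a unital completely contractive representation $\sigma: A(\cH) \to B(\cH_S)$ with $\sigma(M_z) = S$, and compression to the semi-invariant subspace $\cE$ preserves complete contractivity.

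The main obstacle is the structural analysis of $C^*(A(\cH))$ invoked in both directions: that $\cK(\cH) \subseteq C^*(A(\cH))$ with quotient $C(\partial \bB_d)$, and correspondingly that $A(\cH) \hookrightarrow C(\partial \bB_d)$ completely contractively, under the ratio hypothesis $\lim a_n/a_{n+1} = 1$. This generalizes the corresponding well-known facts for the Drury--Arveson algebra $\cA_d$, and I expect it to follow from standard essential normality computations for the weighted shifts $M_{z_j}$ in the orthonormal basis diagonalizing the unitarily invariant kernel, or by direct citation of the relevant literature on weighted Bergman-type spaces.
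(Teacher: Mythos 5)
Your proposal is correct, and for the two substantial implications it follows the paper's strategy closely. For (i) $\Rightarrow$ (ii) your argument is exactly the paper's: apply Theorem \ref{T:A_H_representation} (equivalently Theorem \ref{T:main3}) to get a $C^*(A(\cH))$-coextension $\pi$, then use the Guo--Hu--Xu structure theorem (Theorem \ref{T:essentiallynormalan}, which is precisely where the hypothesis $\lim_n a_n/a_{n+1}=1$ enters) together with Lemma \ref{lem:rep_split} to split $\pi=\pi_1\oplus\pi_2$ with $\pi_1(M_z)\cong M_z^{(\kappa)}$ and $\pi_2(M_z)$ a spherical unitary, and transport coinvariance of $\cE$ back to the tuple. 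For (iii) $\Rightarrow$ (i) you deviate slightly: the paper uses the short exact sequence to lift the spherical unitary to a $*$-representation of $C^*(A(\cH))$ sending $M_z$ to $M_z^{(\kappa)}\oplus U$ and then compresses, whereas you build a unital completely contractive representation of $A(\cH)$ directly, as the direct sum of the ampliated identity representation and the composition $A(\cH)\to C(\partial\bB_d)\to B(\cH_U)$, and then compress to the semi-invariant subspace before invoking Theorem \ref{T:A_H_representation}. This variant is fine and in fact a bit more economical, since it does not need the Toeplitz extension for that direction; note, however, that the complete contractivity of the restriction map $A(\cH)\to C(\partial\bB_d)$ is elementary (the multiplier norm dominates the supremum norm on the ball, and one uses the maximum principle), and does not require the regularity hypothesis as you suggest --- the paper records exactly this observation in the remark following Proposition \ref{P:UEP_bigspace}. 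The structural facts you flag as the ``main obstacle'' are indeed the content of Theorem \ref{T:essentiallynormalan}, cited from Guo--Hu--Xu, so your plan is complete.
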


The regularity condition on the coefficients $a_n$ is discussed in Subsection \ref{ss:regular}. It is satisfied
for many spaces of interest including the Hardy space, the Drury-Arveson space and the Dirichlet space.

In Section \ref{S:AH}, we study Question \ref{Q:intro2} for the algebras $A(\cH)$. More precisely, we
identify all the boundary representations (Theorem \ref{T:bdryan}), compute
the $C^*$-envelope (Corollary \ref{C:envelope}), and determine
hyperrigidity (Theorem \ref{T:hyperrigid}) for the algebras $A(\cH)$, where $\cH$ is a unitarily invariant
complete Nevanlinna-Pick space which satisfies an additional regularity assumption.
(We will review these notions in Subsection \ref{S:prelimuep}.)
These results make use of earlier results by Guo-Hu-Xu \cite{GHX04} and can be summarized as follows.

\begin{theorem}\label{T:main4}
Let $\cH$ be a unitarily invariant complete Nevanlinna-Pick space on the unit ball with reproducing kernel 
\[
k(z,w)=\sum_{n=0}^\infty a_n \langle z,w \rangle^n,
\]
where $a_0 = 1$ and $a_n > 0$ for all $n \in \mathbb N$. Suppose that $\lim_{n \to \infty} a_n / a_{n+1} = 1$ and that $\cH$ is not the Hardy space $H^2(\bD)$.
\begin{enumerate}[label=\normalfont{(\alph*)}]
  \item If $\sum_{n=0}^\infty a_n = \infty$, 
then the boundary representations for $A(\cH)$ are precisely the characters of evaluation at a point on the sphere, along with the identity representation of $C^*(A(\cH))$. Furthermore, the algebra $A(\cH)$ is hyperrigid.
    \item  If $\sum_{n=0}^\infty a_n < \infty$, then
the identity representation of $C^*(A(\cH))$
      is the only boundary representation  for $A(\cH)$. In particular, the algebra $A(\cH)$ is not hyperrigid.
  \end{enumerate}
  In particular, the $C^*$-envelope of $A(\cH)$ is $C^*(A(\cH)) \subset B(\cH)$ in each case.
\end{theorem}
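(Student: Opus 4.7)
The approach is to combine the representation-theoretic characterization of Theorem \ref{T:main3} with the structural results on $C^*(A(\cH))$ due to Guo-Hu-Xu \cite{GHX04}. By Theorem \ref{T:main3}, the characters of $A(\cH)$ correspond to scalar tuples $\lambda$ with $1/k(\lambda, \bar\lambda) \ge 0$, a set that equals $\overline{\bB_d}$ in both cases; only the sphere characters $\rho_\zeta$ ($\zeta \in \partial\bB_d$) and the identity representation on $\cH$ can be boundary representations (interior characters are ruled out by standard kernel-function arguments). The regularity condition $a_n/a_{n+1} \to 1$, together with \cite{GHX04}, guarantees that $\cK(\cH) \subseteq C^*(A(\cH))$ and that the symbol map induces an isomorphism $C^*(A(\cH))/\cK(\cH) \cong C(\partial \bB_d)$. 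Consequently, the irreducible $*$-representations of $C^*(A(\cH))$ are exactly the identity representation and the sphere characters $\rho_\zeta$, so identifying the boundary representations reduces to verifying the unique extension property (UEP) for each of these.

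For case (b), the assumption $\sum a_n < \infty$ places $k_\zeta$ in $\cH$ with $\|k_\zeta\|^2 = \sum a_n$ for every $\zeta \in \partial \bB_d$. The Berezin-type state
\[
\sigma_\zeta(a) := \frac{\langle a\, k_\zeta, k_\zeta\rangle}{\|k_\zeta\|^2}
\]
on $C^*(A(\cH))$ restricts to the character $\rho_\zeta$ on $A(\cH)$, thanks to $M_\phi^* k_\zeta = \overline{\phi(\zeta)} k_\zeta$. However, $\sigma_\zeta$ is non-multiplicative: picking a polynomial $\phi$ that vanishes at $\zeta$ without being identically zero, $M_\phi k_\zeta \ne 0$ in $\cH$ and so $\sigma_\zeta(M_\phi^* M_\phi) > 0 = |\rho_\zeta(\phi)|^2$, violating Kadison-Schwarz equality. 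Thus $\rho_\zeta$ admits two distinct u.c.p.\ extensions to $C^*(A(\cH))$---the character extension and $\sigma_\zeta$---so UEP fails and $\rho_\zeta$ is not a boundary representation. By Arveson's hyperrigidity theorem, the existence of these irreducible $*$-representations without UEP shows that $A(\cH)$ is not hyperrigid. The identity representation remains a boundary representation, its UEP following from the multiplicative-domain argument applied to the generating subalgebra $A(\cH)$.

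For case (a), $\sum a_n = \infty$ forces $k_\zeta \notin \cH$, rendering the Berezin construction unavailable. Here I would argue that any state $\sigma$ on $C^*(A(\cH))$ extending a sphere character $\rho_\zeta$ must annihilate $\cK(\cH)$, hence factor through $C(\partial \bB_d)$ as the point evaluation $\delta_\zeta$ (since $A(\cH)$ separates points of $\partial \bB_d$) and therefore be multiplicative. To establish the vanishing on compacts, decompose $\sigma$ into its normal and singular parts relative to the identity representation, the normal part being of the form $a \mapsto \mathrm{tr}(\rho_0 a)$ for a positive trace-class operator $\rho_0$. A peaking function $\phi_\zeta \in A(\cH)$---for instance the normalization of $(1 + \langle z, \zeta\rangle)/2$, which has modulus strictly below $1$ on $\overline{\bB_d} \setminus \{\zeta\}$---combined with Kadison-Schwarz forces $\sigma((M_{\phi_\zeta} - 1)^*(M_{\phi_\zeta} - 1)) = 0$, which means any unit vector in the range of $\rho_0$ is a joint eigenvector of $(M_{z_1}, \ldots, M_{z_d})$ associated to the point $\zeta$; but such an eigenvector would have to be proportional to $k_\zeta$, which is not in $\cH$. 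Hence $\rho_0 = 0$, $\sigma$ is singular on $\cK(\cH)$, and UEP holds for every $\rho_\zeta$. The identity representation retains UEP as before, so Arveson's hyperrigidity theorem delivers hyperrigidity of $A(\cH)$. In both cases the identity representation is a boundary representation, so the Shilov ideal is zero and $C^*(A(\cH)) \subset B(\cH)$ is the $C^*$-envelope. The principal difficulty is the case (a) UEP argument: promoting the heuristic ``no joint eigenvector'' observation into a rigorous elimination of the normal component of $\sigma$ requires the fine structural input of \cite{GHX04}, which is where the regularity condition $a_n/a_{n+1} \to 1$ is essential.
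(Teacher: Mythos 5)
Your reduction to the Guo--Hu--Xu exact sequence and your treatment of case (b) for the sphere characters are essentially the paper's argument (the paper distinguishes the two extensions of $\rho_\zeta|_{A(\cH)}$ by their behaviour on the compacts rather than by multiplicativity, but your version works). The rest has genuine gaps. First, the unique extension property of the identity representation is never actually proved: ``the multiplicative-domain argument applied to the generating subalgebra'' is not an argument, since a u.c.p.\ extension of the identity of $A(\cH)$ is not known a priori to be multiplicative on anything beyond $A(\cH)$ itself. Tellingly, you never use the hypothesis that $\cH$ is not the Hardy space, yet for $\cH = H^2(\bD)$ the identity representation of the Toeplitz algebra is \emph{not} a boundary representation for $A(\bD)$ (its $C^*$-envelope is $C(\bT)$), so any proof must use that hypothesis. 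The paper does this via Arveson's boundary theorem: the quotient map onto $C(\partial\bB_d)$ fails to be isometric on $A(\cH)$ because $\|M_{z_1^n}\|^2 = 1/a_n$ while $\|z_1^n\|_\infty = 1$, and in the remaining case $a_n \equiv 1$ one has $\cH = H^2_d$ with $\|M_{z_1 z_2}\|^2 = 1/2 > \|z_1 z_2\|_\infty^2$ for $d \ge 2$. Since the $C^*$-envelope statement rests entirely on this point, the gap is central.

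Second, your case (a) argument for the sphere characters hinges on a multiplier-norm-one peaking function, and your candidate does not supply one: the multiplier norm of $(1+\langle z,\zeta\rangle)/2$ generally exceeds $1$ (already for the Dirichlet space, where $\|M_z\| \ge \sqrt{2}$), and dividing by that norm destroys the peak value at $\zeta$, so the Kadison--Schwarz chain $\sigma(M_{\phi_\zeta}) = 1$, $\sigma(M_{\phi_\zeta}^* M_{\phi_\zeta}) \le \|M_{\phi_\zeta}\|^2 = 1$ collapses. The existence of such a peaking multiplier is genuinely delicate: it must \emph{fail} when $\sum a_n < \infty$ (if $\|M_\phi\|\le 1$, $\phi(\zeta)=1$ and $\phi \ne 1$ on $\bB_d$, then the vector state at $k_\zeta$ forces $(\phi - 1)k_\zeta = 0$, i.e.\ $\phi \equiv 1$), so this is precisely where $\sum a_n = \infty$ has to enter, and you have not supplied that input; your attempt to insert it later via ``$k_\zeta \notin \cH$'' is misdirected, since what the Cauchy--Schwarz step would actually yield is $M_{\phi_\zeta} h = h$, hence $(\phi_\zeta - 1)h = 0$ and $h = 0$ outright, not a joint eigenvector of the coordinate adjoints -- which shows the proposed mechanism cannot be locating the dichotomy correctly. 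The paper avoids all of this by proving the unique extension property for \emph{arbitrary} spherical unitaries (Proposition \ref{P:UEP_bigspace}): when $\sum a_n = \infty$ one has $\sum b_n = 1$, so $\psi_k(U)$ is a spherical unitary and the Richter--Sundberg based Theorem \ref{T:sphunitary_uep} applies. Third, hyperrigidity in case (a) does not follow from knowing the boundary representations -- that implication is exactly Arveson's open hyperrigidity conjecture. One needs the unique extension property for \emph{every} unital $*$-representation: the paper splits an arbitrary representation via Lemma \ref{lem:rep_split} into a multiple of the identity plus a part annihilating the compacts, uses Proposition \ref{P:UEP_bigspace} for general spherical unitaries (not just characters), and invokes stability of the unique extension property under direct sums. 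Your proposal verifies the property only for the one-dimensional representations $\rho_\zeta$, which is insufficient.
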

It is worth pointing out that all the $C^*$-algebras $C^*(A(\cH))$ are in fact unitarily equivalent
(Proposition \ref{P:toeplitzisom}). We now briefly address the situation where $\cH$ is the Hardy space. In that case, the algebra $A(\cH)$ coincides with the disc algebra $A(\bD)$, the boundary representations of which are well known to be the characters of evaluation at a point in $\bT$. In particular, the $C^*$-envelope of $A(\bD)$ is $C(\bT)$, and $A(\bD)$ is not hyperrigid inside $C^*(A(\bD)) \subset B(H^2(\bD))$.

In Sections \ref{S:ess_norm} and \ref{S:ideals}, we extend the results mentioned above to quotients
of $A(\cH)$ by homogeneous polynomial ideals. In this setting, Arveson's famous essential
normality conjecture plays an important role,
and we review some relevant background on this topic in Section \ref{S:ess_norm}.
Section \ref{S:ideals} contains the desired generalizations.
The results on boundary representations and hyperrigidity therein are related to 
some of the recent contributions of Kennedy and Shalit \cite{KS2015} on the Drury-Arveson space.
Moreover, they fit in well with the recent interest in operator algebras
associated to geometric varieties \cite{DRS2011,DRS2015,hartz2015isom}.

In the final Section \ref{sec:NP_nec}, we show that the automatic coextension property observed in Theorem \ref{T:main3}
is in fact equivalent to the complete Nevanlinna-Pick property for a familiar class
of reproducing kernel Hilbert spaces on the unit ball. The main result
of this section is the following theorem (Theorem \ref{thm:coext_char_regular}),
whose proof uses some of the machinery
on homogeneous ideals developed in Section \ref{S:ideals}.

\begin{theorem}
  Let $\cH$ be a unitarily invariant space on the unit ball
  with reproducing kernel
\[
k(z,w)=\sum_{n=0}^\infty a_n \langle z,w \rangle^n,
\]
where $a_0 = 1$ and $a_n > 0$ for all $n \in \mathbb N$. Suppose that $\lim_{n \to \infty} a_n / a_{n+1} = 1$.
  If every unital completely
  contractive representation of $A(\cH)$ coextends to a $*$-representation of $C^*(A(\cH))$, then
  $\cH$ is a complete Nevanlinna-Pick space.
\end{theorem}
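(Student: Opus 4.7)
I will argue by contrapositive. Suppose $\cH$ is not a complete Nevanlinna-Pick space; I construct a unital completely contractive representation of $A(\cH)$ that does not coextend. For unitarily invariant ball kernels, $\cH$ is CNP if and only if the Taylor coefficients $b_n$ in the expansion $1 - 1/k(z,w) = \sum_{n \geq 1} b_n \langle z,w\rangle^n$ are all non-negative. Matching coefficients in the identity $k \cdot (1/k) = 1$ yields $b_1 = a_1 > 0$, so the failure of CNP produces a minimal index $n_0 \geq 2$ with $b_{n_0} < 0$.

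The test representation I use is the restriction of the multiplier action to the $M_z$-invariant subspace $\cN := \{f \in \cH : f(0) = 0\}$, namely $\tilde\rho : A(\cH) \to B(\cN)$, $\phi \mapsto M_\phi|_\cN$; in the terminology of Section~\ref{S:ideals}, this is the representation attached to the augmentation ideal $(z_1, \ldots, z_d)$. Writing $T := \tilde\rho(M_z)$, the identity $M_{z_j}^* 1 = 0$ forces $T^{\alpha*} = P_\cN M_{z^\alpha}^*|_\cN$, whence
\[
T^\alpha T^{\alpha*} \xi \;=\; M_{z^\alpha} M_{z^\alpha}^* \xi \;-\; \langle \xi, z^\alpha\rangle\, z^\alpha \qquad (\xi \in \cN).
\]
Summing against the coefficients of $1/k$ and combining with the classical identity $1/k(M_z, M_z^*) = P_0$ on $\cH$ (the rank-one projection onto the constants) and the norm formula $\|z^\beta\|^2 = \beta!/(|\beta|!\, a_{|\beta|})$, one computes
\[
1/k(T,T^*)\, z^\beta \;=\; \frac{b_{|\beta|}}{a_{|\beta|}}\, z^\beta \qquad (|\beta| \geq 1),
\]
so that positivity of $1/k(T,T^*)$ on $\cN$ is equivalent to the CNP condition $b_n \geq 0$ for every $n \geq 1$.

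Suppose now that $\tilde\rho$ coextends to a $*$-representation $\pi : C^*(A(\cH)) \to B(\cK)$, and set $S := \pi(M_z)$. The coextension relation $T^{\alpha*} = S^{\alpha*}|_\cN$ gives the compression identity $T^\alpha T^{\alpha*} = P_\cN S^\alpha S^{\alpha*} P_\cN|_\cN$, and since $T^{\alpha*} z^\beta = S^{\alpha*} z^\beta$ vanishes for $|\alpha| \geq |\beta|$, one obtains the finite-sum quadratic form identity
\[
\Bigl\langle 1/k(T,T^*)\, z^\beta,\, z^\beta\Bigr\rangle \;=\; \sum_{n=0}^{|\beta|-1} c_n \sum_{|\alpha|=n} \frac{n!}{\alpha!}\, \|S^{\alpha*} z^\beta\|^2.
\]
The main claim is that the right-hand side is non-negative; combined with $a_{|\beta|} > 0$ and the eigenvalue formula above, this forces $b_n \geq 0$ for every $n \geq 1$, contradicting $b_{n_0} < 0$.

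The main technical obstacle is precisely this non-negativity claim, which amounts to making sense of the positivity of the formal operator $1/k(S,S^*)$ despite the possible non-convergence in operator norm of the series defining it. I plan to handle this via the standard structure theory of $*$-representations of the Toeplitz-type algebra $C^*(A(\cH))$, available under the regularity hypothesis $\lim_{n \to \infty} a_n/a_{n+1} = 1$ (which already underlies the results of Sections~\ref{S:coextue}--\ref{S:AH}): any such $\pi$ decomposes as $\pi_1 \oplus \pi_2$, where $\pi_1$ is a multiple of the identity representation on $\cH$ and $\pi_2$ factors through the commutative quotient $C^*(A(\cH))/K(\cH) \cong C(\partial \bB_d)$. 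For the $\pi_1$-summand, the partial sums of $\sum c_n (S_1 S_1^*)^{(n)}$ converge strongly on polynomial vectors to the positive operator $\pi_1(P_0)$; for the $\pi_2$-summand, joint functional calculus on the sphere produces a non-negative scalar operator, since $1/k(z,z) \geq 0$ on $\partial \bB_d$. Decomposing each $\|S^{\alpha*} z^\beta\|^2$ against these two summands and reassembling then yields the required positivity and closes the argument.
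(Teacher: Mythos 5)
Your skeleton runs parallel to the paper's: both arguments rest on a compression of $M_z$ whose formal $1/k$-operator has eigenvalue $b_m/a_m$ on homogeneous polynomials of degree $m$ (your eigenvalue formula is the analogue of Lemma \ref{lem:1/k_coext_NP_char}(b)), both feed a coextension into this, and both split the coextending $*$-representation as $\pi_1 \oplus \pi_2$ (a multiple of the identity plus a part killing the compacts, via regularity). The genuine gap is the final positivity claim, and specifically your treatment of the $\pi_2$-summand. Writing $\xi_2$ for the component of $z^\beta$ in the spherical-unitary summand and using $\sum_{|\alpha|=n}\binom{n}{\alpha}U^\alpha (U^*)^\alpha = I$, the quantity you must control there is the \emph{truncated} scalar $\bigl(1-\sum_{n=1}^{|\beta|-1} b_n\bigr)\|\xi_2\|^2$. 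Your proposed justification --- functional calculus together with ``$1/k(z,z)\ge 0$ on $\partial\bB_d$'' --- does not deliver this: without the Nevanlinna-Pick property the series $\sum_n b_n$ need not converge at all, and in any case the truncated sums can exceed $1$; for the Bergman kernel $b_1=2$, so $1-\sum_{n=1}^{1}b_n=-1<0$. Likewise your $\pi_1$-argument (``strong convergence on polynomial vectors'') does not apply as stated, because the relevant vector is the $\pi_1$-component of $z^\beta$ inside the abstract coextension space, which is not a priori a polynomial vector, and the sum to be bounded is truncated at $n=|\beta|-1$, not infinite.

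What closes this gap --- and is exactly the paper's key step --- is that the $\pi_2$-component is in fact zero. Since your subspace of functions vanishing at $0$ is coinvariant and $(T^*)^\alpha z^\beta=0$ for $|\alpha|\ge|\beta|$, you get $S^{\alpha *}z^\beta=0$ for all $|\alpha|\ge |\beta|$; applying $\sum_{|\alpha|=|\beta|}\binom{|\beta|}{\alpha}U^\alpha(U^*)^\alpha=I$ to $\xi_2$ forces $\xi_2=0$, and the same vanishing shows the $\pi_1$-component lies in $(\text{polynomials of degree}\le |\beta|-1)\otimes\cE$, where the finite positivity identity (the relevant $1/k$-operator is the projection onto the constants, Lemma \ref{lem:1/k_coext_NP_char}(a)) applies and yields the inequality. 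The paper implements this globally rather than vector-by-vector: it first compresses to $\cH\ominus\langle z_1,\ldots,z_d\rangle^{N+1}$ so that all operators are nilpotent, shows the spherical-unitary leg $V_2$ of the coextension isometry vanishes by precisely this nilpotency-versus-$\sum_j U_jU_j^*=I$ argument, deduces that the compressed tuple coextends to $S^{\cI}\otimes I_\cE$, and then invokes Lemma \ref{lem:1/k_coext_NP_char}. Until you supply the vanishing of the $\pi_2$-component (or an equivalent device), the positivity claim at the heart of your proof is unproven, and the mechanism you propose for it is false in general.
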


\textbf{Acknowledgements.}
This project was initiated during the workshop ``Hilbert Modules and Complex Geometry'' at Mathematisches
Forschungsinstitut Oberwolfach in April 2014. The authors are grateful to the organizers of the workshop
and to the institute. The second-named author would like to thank John McCarthy for valuable discussions.

\section{Preliminaries on reproducing kernel Hilbert spaces}\label{S:prelimRKHS}

\subsection{Kernels and Nevanlinna-Pick spaces}
\label{ss:kernels}
Let $\cH$ be a reproducing kernel Hilbert space on a set $X$. Background material on reproducing kernel Hilbert spaces can be found in \cite{PR16}
and \cite{AM2002}. Let $k$ denote the reproducing kernel of $X$, which is positive semi-definite in the sense
that for any finite set of points $x_1,\ldots,x_n \in X$, the $n \times n$ matrix $[k(x_i,x_j)]_{i,j=1}^n$
is positive semi-definite.
The central object of our investigations is not the space $\cH$ itself as much as its \emph{multiplier algebra}
\begin{equation*}
  \Mult(\cH) = \{\varphi: X \to \bC: \varphi f \in \cH \text{ for all } f \in \cH \}.
\end{equation*}
An application of the closed graph theorem shows that for $\varphi \in \Mult(\cH)$,
the associated multiplication operator $M^{\cH}_\varphi$
is bounded on $\cH$.
Unless otherwise noted, we will always assume that $\cH$ has no common zeros, or equivalently,
that $k(w,w) \neq 0$ for all $w \in X$.
In this case, the assignment
$\varphi \mapsto M^{\cH}_\varphi$ is injective, so we may regard $\Mult(\cH)$ as a subalgebra
of $B(\cH)$ in this way.
When the space $\cH$ is clear from context, we simply write $M_\varphi$ instead of $M_\varphi^{\cH}$.

Most of the reproducing kernel Hilbert spaces we are interested in will be assumed to be \emph{irreducible} in the sense that $k(z,w) \neq 0$ for all $z,w \in X$, and $k(\cdot,w)$ and $k(\cdot,v)$ are linearly independent
if $v \neq w$ (see \cite[Section 7.1]{AM2002}). Moreover,
it will sometimes be convenient to assume that the reproducing kernel $k$
is \emph{normalized at $x_0 \in X$}, which means that $k(x,x_0) = 1$ for all $x \in X$.
A reproducing kernel Hilbert space $\cH$ is said to be a \emph{complete Nevanlinna-Pick space}
 if for all positive integers $r,n$, every collection of points
$z_1,\ldots,z_n$ in $X$ and every choice of $r\times r$ complex matrices $W_1,\ldots,W_n$, the non-negativity of the block matrix
\begin{equation*}
  \Big[ k(z_i,z_j) (I - W_i W_j^*) \Big]_{i,j=1}^n
\end{equation*}
is sufficient for the existence of a matrix-valued multiplier $\Phi$ \cite[Section 2.8]{AM2002}
with $\|\Phi\|\leq 1$
 such that
\begin{equation*}
  \Phi(z_i) = W_i, \quad i=1,\ldots,n.
\end{equation*}
Classical examples of complete Nevanlinna-Pick spaces include the Hardy space $H^2(\bD)$ and the Drury-Arveson space $H^2_d$ mentioned in the introduction. 
In general, irreducible complete Nevanlinna-Pick spaces are characterized in terms
of positivity of certain matrices by a
theorem of McCullough and Quiggin \cite{quiggin1993,mccullough1994}. A related characterization
is due to Agler and McCarthy \cite{AM2000}.
We refer the reader to \cite[Chapter 7]{AM2002} for a comprehensive account on the characterization
of complete Nevanlinna-Pick spaces.

\subsection{Unitarily invariant spaces}\label{SS:uispaces}
Although we will spend some time working at the level of generality described above, some of our finer results are obtained for a more concrete subclass of reproducing kernel Hilbert spaces, which we now introduce. Let $\bB_d\subset \bC^d$ denote the open unit ball. A \emph{unitarily invariant space} on $\bB_d$ is a reproducing kernel Hilbert space with a kernel of the form
 \begin{equation*}
   k(z,w) = \sum_{n=0}^\infty a_n \langle z,w \rangle^n
\end{equation*}
for some sequence of strictly positive coefficients $\{a_n\}_n$ such that $a_0 = 1$.
Since $a_0  =1$, the kernel $k$ is normalized at $0$.
It is well known that the monomials  $\{z^\alpha\}_{\alpha\in \bN^d\setminus \{0\}}$
form an orthogonal basis for $\cH$ and that
\[
\|z^\alpha\|^2_{\cH}=\frac{1}{a_{|\alpha|}}\frac{\alpha!}{|\alpha|!}.
\]
We use the usual multi-index notation: given $\alpha\in \bN^d\setminus \{0\}$ we have
\[
\alpha=(\alpha_1,\ldots,\alpha_d), \quad |\alpha|=\alpha_1+\ldots+\alpha_d, \quad \alpha!=\alpha_1!\cdots \alpha_d!
\]
and
\[
z^\alpha=z_1^{\alpha_1}\cdots z_d^{\alpha_d}.
\]
See \cite[Section 4]{greene2002} for more detail. 
Here we also feel compelled to point out that in view of \cite{AM2000}, the case where $d$ is an infinite cardinal number is important, however in this paper we will always assume that $d$ is a finite positive integer.

By a \emph{unitarily invariant complete Nevanlinna-Pick space} on the ball, we mean
a unitarily invariant space $\cH$ on $\bB_d$ with kernel $k(z,w) = \sum_{n=0}^\infty a_n \langle z,w \rangle^n$ having the following additional properties:
\begin{enumerate}
  \item \label{it:roc} the power series $\sum_{n=0}^\infty a_n t^n$
  has radius of convergence $1$, and
  \item \label{it:cnp} $\cH$ is an irreducible complete Nevanlinna-Pick space.
\end{enumerate}

The radius of
convergence of the power series appearing in condition \eqref{it:roc} is at least $1$, as $k$ is defined on $\bB_d \times \bB_d$. Demanding
that it be exactly $1$ guarantees that the functions in $\cH$ do not all extend to analytic functions on a ball of radius greater than $1$.
This condition is closely related to the notion of algebraic consistency, see \cite[Section 5]{hartz2015isom} and in particular Lemma 5.3 therein.
Finally, in relation to property \eqref{it:cnp}, the characterization of the complete Nevanlinna-Pick property takes on a particularly simple form for unitarily invariant spaces, and we record it below for convenience. It is essentially \cite[Lemma 7.33]{AM2002}. The precise statement can also be found in \cite[Lemma 2.3]{hartz2015isom}.

\begin{lemma}
  \label{L:NP-char}
  Let $\cH$ be a unitarily invariant space on $\bB_d$ with reproducing kernel
  \begin{equation*}
    k(z,w) = \sum_{n=0}^\infty a_n \langle z,w \rangle^n.
  \end{equation*}
  Then $\cH$ is an irreducible complete Nevanlinna-Pick space if and only if 
  \[
  1-\frac{1}{k(z,w)}=\sum_{n=1}^\infty b_n \langle z,w\rangle^n  \quad (z,w\in \bB_d)
  \]
  for some sequence $\{b_n\}_{n}$ of non-negative numbers.
\end{lemma}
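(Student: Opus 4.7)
The plan is to combine the McCullough-Quiggin characterization of complete Nevanlinna-Pick kernels with a Taylor expansion argument exploiting the rotational invariance of $k$. The crucial simplification in this setting is that since $a_0 = 1$, the kernel $k$ is automatically normalized at the origin, so no renormalization step is required.

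First I would invoke the McCullough-Quiggin theorem: a normalized irreducible kernel $k$ is a complete Nevanlinna-Pick kernel if and only if $1 - 1/k(z,w)$ is positive semi-definite on $\bB_d \times \bB_d$. In our setting, irreducibility is easy: $k(z,0) = 1 \neq 0$, and the assumption $a_1 > 0$ ensures via first-order Taylor comparison that $k(\cdot,w)$ determines $w$, hence $k(\cdot,w)$ and $k(\cdot,v)$ are linearly independent when $v \neq w$. Because $k(z,w)$ depends only on $\langle z,w\rangle$, so does $1/k(z,w)$, and we may expand
\[
1 - \frac{1}{k(z,w)} = \sum_{n \geq 1} b_n \langle z, w\rangle^n
\]
for some real coefficients $b_n$. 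The lemma thus reduces to the claim that this kernel is positive semi-definite on $\bB_d \times \bB_d$ if and only if $b_n \geq 0$ for every $n \geq 1$.

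The easy direction is immediate: each $\langle z,w\rangle^n$ is positive semi-definite, being the $n$-fold Schur product of the positive semi-definite kernel $\langle z,w\rangle = \sum_i z_i \bar w_i$, and a non-negative linear combination of positive semi-definite kernels is positive semi-definite. For the converse, the plan is to extract individual $b_n$ using circular symmetry. Fix $N \geq 1$ and a sufficiently small $r \in (0,1)$, and set $z_j := r \omega^{j-1} e_1$ for $j = 1, \ldots, N$, where $\omega := e^{2\pi i /N}$. The matrix $[g(\langle z_i, z_j\rangle)]_{i,j=1}^N$, where $g(t) := \sum_{n \geq 1} b_n t^n$, is then circulant, and diagonalization in the Fourier basis yields the eigenvalues
\[
\lambda_\ell = N\sum_{\substack{n \geq 1 \\ n \equiv \ell \,(\mathrm{mod}\,N)}} b_n\, r^{2n}, \qquad \ell = 0, 1, \ldots, N-1.
\]
For $\ell \in \{1, \ldots, N-1\}$ the leading term as $r \to 0^+$ is $Nb_\ell r^{2\ell}$, so positive semi-definiteness of the Gram matrix forces $b_\ell \geq 0$. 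Taking $N$ arbitrarily large yields $b_n \geq 0$ for all $n \geq 1$.

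The main obstacle is precisely this last step; the rest amounts to bookkeeping. The circulant/Fourier computation is one clean route to isolating the coefficients $b_n$. A conceptually similar alternative is to expand $\langle z,w\rangle^n = \sum_{|\alpha|=n}\binom{n}{\alpha}z^\alpha \bar w^\alpha$ and interpret the resulting diagonal coefficient matrix $b_{|\alpha|}\binom{|\alpha|}{\alpha}$ as the Gram matrix of Taylor-coefficient functionals in the associated reproducing kernel Hilbert space, which is automatically positive semi-definite. Either approach supplies the non-trivial implication and completes the proof.
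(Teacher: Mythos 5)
The paper offers no proof of this lemma, deferring to \cite[Lemma 7.33]{AM2002} and \cite[Lemma 2.3]{hartz2015isom}; your argument is correct and is essentially the standard argument behind those citations: reduce via the McCullough--Quiggin criterion to positive semi-definiteness of $1-1/k$, and then extract the coefficients using rotational symmetry. Your circulant computation is right, and the $r \to 0^+$ limit does isolate $b_\ell$, since for $r^2$ inside the radius of convergence of $\sum_n b_n t^n$ the tail $\sum_{m\ge 1} b_{\ell+mN} r^{2mN}$ tends to $0$. Two small points should be tightened. First, in the converse direction, irreducibility in the paper's sense requires $k(z,w)\neq 0$ for \emph{all} $z,w\in\bB_d$, which $k(z,0)=1$ does not give; it does follow from $b_n\ge 0$: letting $t\uparrow 1$ in $1-1/k(te_1,te_1)=\sum_n b_n t^{2n}\le 1$ gives $\sum_n b_n\le 1$, hence $\bigl|\sum_n b_n\langle z,w\rangle^n\bigr|<1$ on $\bB_d\times\bB_d$, so $k=\bigl(1-\sum_n b_n\langle z,w\rangle^n\bigr)^{-1}$ cannot vanish. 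Second, in the forward direction you use the expansion $1-1/k=\sum_n b_n\langle z,w\rangle^n$ on all of $\bB_d$, not merely near the origin; here the irreducibility hypothesis supplies this, since $k$ non-vanishing means the function $t\mapsto 1-1/\sum_n a_n t^n$ is analytic on the whole unit disc (every $t\in\bD$ arises as $\langle z,w\rangle$ for some $z,w\in\bB_d$) and therefore agrees with its Taylor series there. With these two remarks inserted, the proof is complete and matches the route taken in the cited references.
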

Observe that the sequence $\{b_n\}_n$ above is in fact uniquely determined by the reproducing
kernel $k$.

\subsection{Bounded and unbounded kernels}\label{SS:an}
When working with unitarily invariant spaces, we will find it convenient to distinguish two cases, depending
on the summability of the sequence $\{a_n\}_n$, or equivalently, on the boundedness of the reproducing kernel.
\begin{itemize}[wide]

\item   
The first case corresponds to $\sum_{n=0}^\infty a_n = \infty$. Then, for every $\zeta$ in the unit sphere, we have $k(w,w) \to \infty$ as $w \to \zeta$.
  As a consequence,
  the space $\cH$ contains a function which is unbounded near $\zeta$, and the open unit ball is the natural domain of definition for the functions in $\cH$. Indeed, this can be made rigorous by saying
  that $\cH$ is an algebraically consistent space on $\bB_d$ (see \cite[Lemma 5.3]{hartz2015isom}),
  but we will not require this more precise statement.

\item The second case corresponds to $\sum_{n=0}^\infty a_n < \infty$. Then, the reproducing kernel $k$ extends
  to a continuous function $\widehat{k}$ on $\ol{\bB_d} \times \ol{\bB_d}$.
  It follows that all functions in $\cH$ extend to continuous functions
  on $\ol{\bB_d}$, and thus $\cH$ can be viewed as a reproducing kernel Hilbert space
  on $\ol{\bB_d}$ whose kernel is $\widehat{k}$.
  Indeed, if $\widehat \cH$ denotes the reproducing kernel Hilbert
  space on $\ol{\bB_d}$ with kernel $\widehat{k}$, then $\widehat \cH \big|_{\bB_d} = \cH$.
  Moreover, since $\widehat{k}$ is continuous on $\ol{\bB_d} \times \ol{\bB_d}$, the functions
  in $\widehat \cH$ are continuous on $\ol{\bB_d}$, so the coisometric
  restriction map $\widehat \cH \to \cH$ (see \cite[Section I.5]{aronszajn1950})
  is injective, and hence unitary.
  In this setting,
  the natural domain of definition for the functions in $\cH$ is the closed unit ball
  (see \cite[Lemma 5.3]{hartz2015isom} for a more precise version of this statement).
\end{itemize}

\subsection{Examples}
\label{ss:exa}
The class of unitarily invariant complete Nevanlinna-Pick spaces on the ball
contains many widely studied spaces.
For instance, for  $s \le 0$ consider the kernel
  \begin{equation*}
    \sum_{n=0}^\infty (n+1)^s \langle z,w \rangle^n
  \end{equation*}
  on the unit ball.
  Then, the associated reproducing kernel Hilbert space $\cH_s(\bB_d)$
  is a unitarily invariant complete Nevanlinna-Pick space (see \cite[Corollary 7.41]{AM2002}). If $-1 \le s \le 0$, the space $\cH_s(\bB_d)$ falls into
  the first category discussed in Subsection \ref{SS:an}, and if $s < -1$ it belongs to the second category.

  If $d =1$, then these are spaces on the unit disc $\bD$, which we simply denote by $\cH_s$.
  The scale of spaces $\{\cH_s: s \le 0\}$
  contains in particular the Hardy space ($s=0$) and the Dirichlet space
  $(s= -1)$, the latter of  which is the source of many deep problems and the target of intense current research;
 see for instance the book \cite{EKM+14} and the references therein.
 For every $d \in \bN$, the space $\cH_0(\bB_d)$ is the Drury-Arveson space $H^2_d$,
 which was mentioned in the introduction.

 A related class of examples are the Besov-Sobolev spaces $\cK_\sigma$ on $\bB_d$ with reproducing kernel
 \begin{equation*}
   \frac{1}{(1 - \langle z,w \rangle)^\sigma},
 \end{equation*}
 where $\sigma \in (0,1]$. It is well known that $\cK_\sigma$ and $\cH_{\sigma-1}$ coincide as vector spaces and that their norms are  equivalent.

\subsection{The algebras $A(\cH)$}
If $\cH$ is a unitarily invariant space on the ball such that
the coordinate functions $z_1,\ldots,z_d$ are multipliers, then we define
\begin{equation*}
  A(\cH) = \ol{\bC[z_1,\ldots,z_d]}^{|| \cdot ||} \subset \Mult(\cH).
\end{equation*}
 That is, the operator algebra $A(\cH)$ is the norm closure of the polynomials inside the multiplier algebra $\Mult(\cH)$.
 If $\cH$ is a unitarily invariant complete Nevanlinna-Pick space, then the coordinate
 functions are automatically multipliers (see the discussion
 at the beginning of \cite[Section 4]{greene2002}), so that $A(\cH)$ is always
 defined in this case.
 Algebras of the type $A(\cH)$
 will be one of our main objects of interest throughout the paper.
 
 Note that if $\cH$ is the Hardy space $H^2(\bD)$, then $A(\cH)$ is the classical disc algebra $A(\bD)$. If $\cH $ is the Drury-Arveson
space $H^2_d$, then $A(\cH)$ is the algebra $\cA_d$ discussed in the introduction, which plays a central role in multivariable operator
theory. Indeed, spurred on by Popescu's \cite{popescu1991} and Arveson's \cite{arveson1998} seminal works,  this algebra and its non-commutative counterpart have received constant and considerable attention over the years (see for instance \cite{DP1998,DRS2011,CD2016duality,CD2016abscont}). 

\subsection{Regular spaces and Toeplitz algebras}
\label{ss:regular}

As mentioned in the introduction, we will sometimes require an additional property of a unitarily invariant space on $\bB_d$ with reproducing kernel
\[
k(z,w)=\sum_{n=0}^\infty a_n \langle z,w \rangle^n,
\]
namely that
\[
\lim_{n \to \infty} a_n / a_{n+1} = 1.
\]
We will call such a space \emph{regular}.
This assumption is fairly common in the study of unitarily invariant spaces (see for example \cite[Section 4]{greene2002} or \cite[Proposition 8.5]{hartz2015isom}). All the classical spaces considered in Subsection \ref{ss:exa}
enjoy this property.
Since we usually assume that the power series $\sum_{n=0}^\infty a_n t^n$ has radius of convergence $1$,
the quantity $\lim_{n \to \infty} a_n / a_{n+1}$, if it exists, is necessarily $1$.
Thus, we regard this extra condition as a regularity assumption on the kernel, hence the terminology.
If $\cH$ is regular, then the polynomials are automatically multipliers on $\cH$ (see \cite[Corollary 4.4 (1)]{GHX04}).
In our context, the usefulness of the regularity assumption stems from the following
result, which is \cite[Proposition 4.3 and Theorem 4.6]{GHX04}. Throughout, we denote the ideal of compact operators on a Hilbert space $\cE$ by $\cK(\cE)$ and the unit sphere (the boundary of the unit ball) by $\partial \bB_d$.

\begin{theorem}[Guo-Hu-Xu]
  \label{T:essentiallynormalan}
Let $\cH$ be a regular unitarily invariant space on $\bB_d$. Then, the operator $I-\sum_{j=1}^d M_{z_j}M^*_{z_j}$ is compact, and there is a short exact sequence
\begin{equation*}
  0 \longrightarrow \cK(\cH) \longrightarrow C^*(A(\cH)) \longrightarrow C(\partial \bB_d) \longrightarrow 0,
\end{equation*}
where the first map is the inclusion and the second map sends $M_{z_j}$ to the coordinate
function $z_j$ for each $1 \le j \le d$.
\end{theorem}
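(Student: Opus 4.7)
The plan is to proceed in four steps, all built on the explicit orthogonal basis of monomials $\{z^\alpha\}$ for $\cH$. Using $M_{z_j} z^\beta = z^{\beta+e_j}$ together with $\|z^\alpha\|^2 = \alpha!/(a_{|\alpha|}|\alpha|!)$, one first computes
\[
M_{z_j}^* z^\alpha = \frac{\alpha_j}{|\alpha|} \cdot \frac{a_{|\alpha|-1}}{a_{|\alpha|}} \, z^{\alpha - e_j} \qquad (\alpha_j \ge 1),
\]
and telescoping the sum over $j$ gives $\sum_j M_{z_j} M_{z_j}^* z^\alpha = (a_{|\alpha|-1}/a_{|\alpha|}) z^\alpha$ for $|\alpha| \ge 1$, with eigenvalue $0$ on the constants. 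The regularity assumption $a_n/a_{n+1} \to 1$ forces the eigenvalues of the diagonal operator $D := I - \sum_j M_{z_j} M_{z_j}^*$ to tend to $0$, so $D$ is compact, establishing the first assertion.

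Next, the same style of computation shows that each commutator $[M_{z_j}^*, M_{z_k}]$ acts on the monomial basis either diagonally ($j=k$) or as a weighted shift $z^\alpha \mapsto c_\alpha z^{\alpha + e_k - e_j}$, and in both cases the nonzero weights have the form $\alpha_j \bigl( \frac{1}{|\alpha|+1} \frac{a_{|\alpha|}}{a_{|\alpha|+1}} - \frac{1}{|\alpha|}\frac{a_{|\alpha|-1}}{a_{|\alpha|}} \bigr)$ up to bounded normalization factors; combining $|1/(n+1) - 1/n| = O(1/n^2)$ with $a_n/a_{n+1} \to 1$ shows these weights tend to $0$ as $|\alpha| \to \infty$, so every cross commutator is compact. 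To see $\cK(\cH) \subset C^*(A(\cH))$, observe that the constant function $1 \in \cH$ is cyclic for $\{M_{z_1},\ldots,M_{z_d}\}$, and that any projection $P$ commuting with $C^*(A(\cH))$ satisfies $M_{z_j}^* P1 = 0$ for all $j$, forcing $P1 \in \bC \cdot 1$ and hence $P \in \{0,I\}$. Therefore $C^*(A(\cH))$ acts irreducibly on $\cH$, and since it contains the nonzero compact operator $D$, the standard theorem that an irreducible $C^*$-algebra intersecting the compacts nontrivially must contain them all yields the desired inclusion.

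Finally, the quotient $C^*(A(\cH))/\cK(\cH)$ is commutative by essential commutativity, generated by the normal images $\pi(M_{z_j})$ subject to the relation $\sum_j \pi(M_{z_j}) \pi(M_{z_j})^* = I$. Gelfand duality identifies the quotient with $C(\Sigma)$ for some compact $\Sigma \subset \partial \bB_d$, via the map $\pi(M_{z_j}) \mapsto z_j|_\Sigma$, and it remains only to show $\Sigma = \partial \bB_d$. For this, fix $\zeta \in \partial \bB_d$ and note that evaluation at $\zeta$ is a contractive character of $A(\cH)$: for $w \in \bB_d$, the reproducing kernel $k_w$ is an eigenvector of $M_p^*$ with eigenvalue $\overline{p(w)}$, so $|p(w)| \le \|M_p\|$, and letting $w \to \zeta$ yields $|p(\zeta)| \le \|M_p\|$ by continuity of the polynomial $p$. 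Arveson's extension theorem produces a unital completely positive extension $C^*(A(\cH)) \to \bC$, which is automatically multiplicative since its range is one-dimensional; being a character of a $C^*$-algebra, it annihilates $\cK(\cH)$ and hence descends to the quotient, yielding a point of $\Sigma$ corresponding to $\zeta$. Thus $\Sigma = \partial \bB_d$ and the short exact sequence is established. The main technical obstacle is the commutator bookkeeping in the essential commutativity step: obtaining compactness of $[M_{z_j}^*, M_{z_k}]$ for $j \neq k$ requires combining the $O(1/n^2)$ cancellation coming from $|1/(n+1) - 1/n|$ with the uncontrolled rate of convergence in $a_n/a_{n+1} \to 1$, and care must be taken that the prefactor $\sqrt{\alpha_j(\alpha_k+1)}$ does not overwhelm this cancellation; the remaining steps are relatively standard once the commutator estimates are in place.
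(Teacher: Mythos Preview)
The paper does not prove this theorem; it is quoted as a known result from Guo--Hu--Xu \cite[Proposition 4.3 and Theorem 4.6]{GHX04}. Your outline is therefore an independent argument, and the first three steps are essentially correct. The diagonalisation of $I-\sum_j M_{z_j}M_{z_j}^*$ is right, and for the cross-commutators the bound $\sqrt{\alpha_j(\alpha_k+1)}\le (|\alpha|+1)/2$ together with
\[
\Bigl|\tfrac{r_n}{n+1}-\tfrac{r_{n-1}}{n}\Bigr|\le \tfrac{r_n}{n(n+1)}+\tfrac{|r_n-r_{n-1}|}{n},\qquad r_n:=a_n/a_{n+1},
\]
does give weights tending to zero (note $r_n\to 1$ forces $r_n-r_{n-1}\to 0$), so the prefactor does not overwhelm the cancellation. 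The irreducibility argument is fine.

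The final step, however, contains a genuine error. A unital completely positive map $C^*(A(\cH))\to\bC$ is merely a state, and states are \emph{not} automatically multiplicative; the one-dimensionality of the target is irrelevant here. Concretely, regularity allows $\sum_n a_n<\infty$ (take $a_n=(n+1)^{-2}$), in which case $k_\zeta\in\cH$ for $\zeta\in\partial\bB_d$ and the vector state $T\mapsto \langle T k_\zeta,k_\zeta\rangle/\|k_\zeta\|^2$ is a u.c.p.\ extension of evaluation at $\zeta$ that does not annihilate $\cK(\cH)$, hence is not a character. So your construction does not produce a point of $\Sigma$. A clean fix uses the unitary invariance of $\cH$: for each $U\in U(d)$, composition with $U$ is a unitary on $\cH$ conjugating the tuple $M_z$ to $U M_z$, so the joint essential spectrum $\Sigma\subset\partial\bB_d$ is $U(d)$-invariant. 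Since $\cH$ is infinite dimensional, the quotient $C^*(A(\cH))/\cK(\cH)$ is nonzero, hence $\Sigma\ne\emptyset$, and transitivity of the $U(d)$-action on $\partial\bB_d$ forces $\Sigma=\partial\bB_d$.
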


\section{Preliminaries on dilations}
\label{sec:prelim_dilations}

In this section, we recall a few basic notions from dilation theory. For background material,
the reader is referred to the book \cite{paulsen2002}.

\subsection{Dilations of operator tuples}
Let $T = (T_1,\ldots,T_d)$ be a tuple of commuting operators on a Hilbert space $\cH$,
let $\cL$ be a Hilbert space which contains $\cH$ as a closed subspace and let $S = (S_1,\ldots,S_d)$ be a tuple
of commuting operators on $\cL$. We say that \emph{$S$ is a dilation of $T$}, or that \emph{$T$ dilates to $S$},
if
\begin{equation*}
  p(T) = P_{\cH} p(S) \big|_{\cH} \quad \text{ for all } p \in \bC[z_1,\ldots,z_d].
\end{equation*}
By a theorem of Sarason (see, for example \cite[Theorem 7.6]{paulsen2002}), this happens
if and only if $\cL$ admits a decomposition $\cL = \cL_{-} \oplus \cH \oplus \cL_{+}$ with respect to which we can write\begin{equation*}
  S_k =
  \begin{bmatrix}
    \ast & 0 & 0 \\
    \ast & T_k & 0 \\
    \ast & \ast & \ast
  \end{bmatrix} \quad (k=1,\ldots,d).
\end{equation*}

We say that \emph{$S$ is an extension of $T$}, or that \emph{$T$ extends to $S$},
if the space $\cL_+$ above is absent. Likewise, we say that \emph{$S$ is a coextension of $T$},
or that \emph{$T$ coextends to $S$}, if the space $\cL_-$ is absent, that is, if
\begin{equation*}
  S_k =
  \begin{bmatrix}
    T_k & 0 \\
    \ast & \ast
  \end{bmatrix} \quad (k=1,\ldots,d).
\end{equation*}
Clearly $S^*$ is an extension of $T^*$ if and only if $S$ is a coextension of $T$.

\subsection{Dilations of representations}
While dilations of operators are our main motivation, it will be beneficial to adopt a more
operator algebraic point of view. Let $\cA$ be a unital operator algebra. 
A \emph{representation of $\cA$} is a unital homomorphism
\begin{equation*}
  \rho: \cA \to B(\cH)
\end{equation*}
for some Hilbert space $\cH$.
Let $\cL \supset \cH$ be a larger Hilbert space, let $\cB \supset \cA$ be a larger operator
algebra and let
$\sigma: \cB \to B(\cL)$ be another representation. We say that \emph{$\rho$ dilates to $\sigma$}
if
\begin{equation*}
 \rho(a) =  P_{\cH} \sigma(a) \big|_{\cH} \quad \text{ for all } a \in \cA.
\end{equation*}
By the aforementioned result of Sarason, this happens if and only if $\cL$ admits a
decomposition $\cL = \cL_{-} \oplus \cH \oplus \cL_+$ with respect to which we can write
\begin{equation*}
    \sigma(a) =
    \begin{bmatrix}
      \ast & 0 & 0 \\
      \ast & \rho(a) & 0\\
      \ast & \ast & \ast
    \end{bmatrix} \quad \text{ for all } a \in \cA.
\end{equation*}
We say that \emph{$\rho$ extends to $\sigma$} if $\cL_+$ is absent,
and \emph{$\rho$ coextends to $\sigma$} if $\cL_-$ is absent.

Of particular interest is the case when $\cB$ is a unital $C^*$-algebra which contains $\cA$.
In this case, we say that \emph{$\rho$ admits a $\cB$-dilation} if $\rho$ dilates to a $*$-representation
of $\cB$. Similarly, we say that \emph{$\rho$ admits a $\cB$-coextension}
if $\rho$ coextends to a $*$-representation of $\cB$.

Let $n\in \bN$ and denote by $M_n(\cA)$ the algebra of $n\times n$ matrices with entries from $\cA$.
A linear map $\psi: \cA \to B(\cH)$ induces another linear map
$\psi^{(n)}: M_n(\cA) \to M_n(B(\cH))$, defined by applying $\psi$ entry-wise. Then, $\psi$ is said to be \emph{completely contractive} (respectively, \emph{completely isometric})
if the induced map $\psi^{(n)}$
is contractive (respectively, isometric) for every $n \in \bN$. 
Using these ideas, Arveson \cite{arveson1969} characterized abstractly when a representation of $\cA$
admits a $C^*(\cA)$-dilation (see also \cite[Corollary 7.7]{paulsen2002}).

\begin{theorem}[Arveson]\label{T:arv}
  Let $\cB$ be a unital $C^*$-algebra, let $\cA \subset \cB$
  be a unital subalgebra of $\cB$ and let $\rho: \cA \to B(\cH)$ be a representation
  of $\cA$. Then $\rho$ admits a $\cB$-dilation if and only if it is completely contractive.
\end{theorem}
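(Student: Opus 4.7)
The ``only if'' direction is immediate: if $\rho$ dilates to a $*$-representation $\pi : \cB \to B(\cL)$, then every amplification $\rho^{(n)}$ is a compression of the completely isometric map $\pi^{(n)}$, hence completely contractive. My attention would be on the ``if'' direction.

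The plan follows a classical three-step strategy: encode $\rho$ as a unital completely positive (ucp) map on an operator system containing $\cA$, extend this ucp map to the ambient $C^*$-algebra via Arveson's extension theorem, and then invoke Stinespring's dilation theorem. The first step is where the complete contractivity of $\rho$ is used, and it is the main technical obstacle. Concretely, I would consider the operator system
\[
\widetilde{\cS} = \left\{ \begin{pmatrix} \lambda 1_{\cB} & a \\ b^* & \mu 1_{\cB} \end{pmatrix} : a, b \in \cA,\ \lambda, \mu \in \bC \right\} \subset M_2(\cB),
\]
and define $\Phi : \widetilde{\cS} \to M_2(B(\cH))$ by placing $\rho(a)$ in the $(1,2)$-entry, $\rho(b)^*$ in the $(2,1)$-entry, and leaving scalars on the diagonal. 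The crux of this step, often called the Paulsen trick, is to verify that $\Phi$ is well-defined, unital, and completely positive by applying at every matrix level the standard characterization that an operator $T$ is a contraction if and only if $\begin{pmatrix} I & T \\ T^* & I \end{pmatrix} \ge 0$.

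Once $\Phi$ is known to be ucp, Arveson's Hahn--Banach-type extension theorem for ucp maps into $B(\cH)$ produces a ucp extension $\widetilde{\Phi} : M_2(\cB) \to M_2(B(\cH))$, and Stinespring's dilation theorem yields a $*$-representation $\widetilde{\pi}$ of $M_2(\cB)$ and an isometry $\widetilde{V}$ with $\widetilde{\Phi}(\cdot) = \widetilde{V}^* \widetilde{\pi}(\cdot) \widetilde{V}$. Using $M_2(\cB) \cong \cB \otimes M_2$ and the fact that $\widetilde{\pi}(\cB \otimes 1)$ commutes with $\widetilde{\pi}(1 \otimes M_2)$, a standard argument with matrix units factors $\widetilde{\pi}$ as $\pi \otimes \mathrm{id}_{M_2}$ for a $*$-representation $\pi : \cB \to B(\cL)$, and reading off the $(1,2)$-corner gives an isometry $V : \cH \to \cL$ satisfying $\rho(a) = V^* \pi(a) V$ for all $a \in \cA$.

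To finish, I would upgrade this single-element compression to a genuine dilation. Because $\rho$ is multiplicative on $\cA$, expanding $\rho(a_1 a_2)$ in two different ways yields
\[
V^* \pi(a_1)(I - V V^*) \pi(a_2) V = 0 \qquad (a_1, a_2 \in \cA),
\]
which forces $V\cH$ to be a semi-invariant subspace for $\pi(\cA)$. Sarason's criterion (Theorem~7.6 of \cite{paulsen2002}) then promotes the first-order identity to the full dilation formula $\rho(p) = P_{V\cH} \pi(p)|_{V\cH}$ for every polynomial $p$ in elements of $\cA$, so that $\pi$ is the desired $\cB$-dilation of $\rho$.
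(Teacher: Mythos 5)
Your proposal is correct, but it follows a different (also classical) route than the paper. The paper, citing \cite[Corollary 7.7]{paulsen2002}, proceeds by extending the unital completely contractive homomorphism $\rho$ directly to a unital completely positive map $\widetilde\rho$ on the operator system $\cA + \cA^* \subset \cB$ via $a + b^* \mapsto \rho(a) + \rho(b)^*$ (Paulsen's Proposition 3.5 and its matricial version), then applying Arveson's extension theorem on $\cB$ and Stinespring's dilation theorem; since the paper's notion of dilation only requires $\rho(a) = P_{\cH}\pi(a)|_{\cH}$ for all $a \in \cA$, and $\cA$ is an algebra (so products of elements of $\cA$ are again in $\cA$), the compression identity from Stinespring already finishes the proof, with semi-invariance of $\cH$ coming for free from Sarason afterwards. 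You instead invoke Paulsen's $2\times 2$ off-diagonal trick, which is the more general machine: it needs neither unitality nor multiplicativity of $\rho$ at the positivity stage and is the natural route if one wants the extension/dilation theorem for completely contractive maps on mere subspaces, but it costs you the extra work of disentangling the $M_2$ tensor factor from the Stinespring representation, and it makes your concluding Sarason step necessary only because you first obtain a single-compression identity rather than multiplicativity-aware data; in the paper's formulation that last step is redundant, though your verification of semi-invariance via $V^*\pi(a_1)(I - VV^*)\pi(a_2)V = 0$ is correct. One small caution: establishing complete positivity of $\Phi$ on the $2\times 2$ operator system requires slightly more than quoting the criterion $\begin{pmatrix} I & T \\ T^* & I \end{pmatrix} \ge 0$ entrywise at each matrix level; one must handle positive elements of $M_n(\widetilde{\cS})$ whose scalar diagonal blocks are not invertible by an $\varepsilon$-perturbation and the equivalence with $\|A^{-1/2} B C^{-1/2}\| \le 1$ (this is exactly Paulsen's off-diagonalization lemma), so your plan is sound but that step carries the real technical content.
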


From an operator theoretic point of view, it may seem more natural to
rephrase results about coextensions in terms of extensions
of the adjoints. In our case, however,
the operator algebraic formulation makes it more convenient to work with the coextensions directly.

\subsection{Coextensions of unital completely positive maps}

An \emph{operator system} is a unital
self-adjoint subspace of a unital $C^*$-algebra. A linear map $\psi$ between operator systems
is said to be \emph{positive} if it maps positive elements to positive elements, and \emph{completely
positive} if the induced map $\psi^{(n)}$ is positive for every $n \in \bN$. 

We will occasionally require a slightly more detailed description of the dilation in Theorem \ref{T:arv}. If $\cA$ is a unital subalgebra of a unital $C^*$-algebra $\cB$ and $\rho: \cA \to B(\cH)$
is a unital completely contractive representation, then $\rho$ extends uniquely
to a unital completely positive linear map $\widetilde \rho: \cA + \cA^* \to B(\cH)$ \cite[Proposition 3.5]{paulsen2002}.
Arveson's extension theorem \cite[Theorem 7.5]{paulsen2002} implies that $\widetilde \rho$ extends
to a unital completely positive linear map  on $\cB$,
and Stinespring's dilation theorem \cite[Theorem 4.1]{paulsen2002} then yields the desired $\cB$-dilation of $\rho$.

To obtain coextensions as opposed to mere dilations, we will use a device of Agler, which has
its roots in his work on hereditary polynomials \cite{agler1982}. In our context, the relevant
statement is the following lemma (see \cite[Theorem 1.5]{agler1982} and its proof).

\begin{lemma}[Agler]
  \label{L:hered_coinvariant}
  Let $\cB$ be a unital $C^*$-algebra, let
  $\psi: \cB \to B(\cH)$ be a unital completely positive linear map and let $\pi:
  \cB \to B(\cL)$ be a $*$-representation such that
  \[
  \psi(T)=P_{\cH}\pi(T)|_{\cH}
  \]
  for every $T\in \cB$.  For each $T \in \cB$,
  the following assertions are equivalent:
  \begin{enumerate}[label=\normalfont{(\roman*)}]
    \item $\psi(T T^*) = \psi(T) \psi(T)^*$,
    \item $\cH$ is coinvariant for $\pi(T)$.
  \end{enumerate}
\end{lemma}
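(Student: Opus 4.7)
The plan is to carry out a direct $2\times 2$ block matrix computation relative to the decomposition $\cL = \cH \oplus \cH^\perp$. Since $\psi(S) = P_\cH \pi(S)|_\cH$ for every $S \in \cB$, the upper-left corner of the block matrix of $\pi(S)$ is always $\psi(S)$. In particular, writing
\[
\pi(T) = \begin{pmatrix} A & B \\ C & D \end{pmatrix},
\]
one has $A = \psi(T)$. Coinvariance of $\cH$ under $\pi(T)$ is exactly the statement that $\pi(T)^*$ leaves $\cH$ invariant, i.e.\ that $B = 0$; this is the geometric content of (ii) that we aim to match up with the algebraic content of (i).

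Next I would compute $\pi(T)\pi(T)^*$ in block form and compress to $\cH$. Since $\pi$ is a $*$-representation, $\pi(TT^*) = \pi(T)\pi(T)^*$, so taking the $(1,1)$ corner yields
\[
\psi(TT^*) \;=\; P_\cH \pi(T)\pi(T)^*\big|_\cH \;=\; AA^* + BB^* \;=\; \psi(T)\psi(T)^* + BB^*.
\]
Thus the quantity $\psi(TT^*) - \psi(T)\psi(T)^*$ equals $BB^*$, which is always a positive operator. This identity is the heart of the lemma.

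From this identity both implications follow. The condition (i) says $BB^* = 0$, which forces $B = 0$, giving (ii). Conversely, if (ii) holds, then $B = 0$ by definition, and the same identity gives (i). No further machinery is needed, and in particular the full strength of Arveson's or Stinespring's theorems is not invoked here; we only use that $\pi$ is a $*$-homomorphism with the prescribed compression property.

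The only genuine subtlety to watch is the direction of the coinvariance convention. Since "$\cH$ coinvariant for $\pi(T)$" means $\pi(T)^*\cH \subseteq \cH$, one must verify that this is equivalent to $B = 0$ rather than $C = 0$; this is immediate from $\pi(T)^* = \bigl(\begin{smallmatrix} A^* & C^* \\ B^* & D^* \end{smallmatrix}\bigr)$, whose action on $\cH \oplus 0$ stays in $\cH \oplus 0$ precisely when $B^* = 0$. Apart from this bookkeeping, the argument is essentially a one-line matrix computation, so I do not anticipate a serious obstacle.
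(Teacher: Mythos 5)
Your argument is correct and complete: the identity $\psi(TT^*)-\psi(T)\psi(T)^*=BB^*$ obtained from the $2\times 2$ block decomposition $\cL=\cH\oplus\cH^{\perp}$, together with the observation that coinvariance of $\cH$ under $\pi(T)$ means exactly $B=0$, gives both implications at once. The paper itself offers no proof of this lemma (it is quoted from Agler's work), and your block-matrix computation is precisely the standard argument underlying the cited result, so there is nothing to add.
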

It should be noted that given a unital completely positive map $\psi$ as in the lemma above, there always exists a $*$-representation $\pi$ with 
  \[
  \psi(T)=P_{\cH}\pi(T)|_{\cH} \quad (T\in \cB).
  \]
 Indeed, this is the content of Stinespring's dilation theorem.

\subsection{Representations of $C^*$-algebras containing the compact operators}

We will frequently make use of the
following standard fact about representations of $C^*$-algebras containing
the compact operators.
\begin{lemma}
  \label{lem:rep_split}
  Let $\mathcal B\subset B(\cH)$ be a unital $C^*$-algebra which contains the ideal $\cK$ of compact operators on $\cH$, and let
  $\pi$ be a unital $*$-representation of $\cB$. Then $\pi$ splits
  as a direct sum $\pi = \pi_1 \oplus \pi_2$, where $\pi_1$ is unitarily equivalent to
  a multiple of the identity representation and $\pi_2$ annihilates $\cK$.
\end{lemma}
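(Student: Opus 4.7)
The plan is to exploit the fact that $\cK$ is an essential ideal in $\cB$ and that its representation theory is completely understood: every non-degenerate $*$-representation of $\cK$ is unitarily equivalent to a multiple of the identity.

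First, I would restrict $\pi$ to $\cK$ and split the representation space $\cL$ of $\pi$ according to the essential subspace of this restriction. That is, set
\[
  \cL_1 = \overline{\mathrm{span}}\{\pi(K)\xi : K \in \cK,\ \xi \in \cL\}, \qquad \cL_2 = \cL_1^{\perp}.
\]
Because $\cK$ is a two-sided ideal of $\cB$, the subspace $\cL_1$ is invariant under $\pi(\cB)$: for any $T \in \cB$, $K \in \cK$ and $\xi \in \cL$ one has $\pi(T)\pi(K)\xi = \pi(TK)\xi$ with $TK \in \cK$. Since $\pi$ is a $*$-representation, $\cL_1$ is also reducing, and thus $\cL_2$ is reducing as well. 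Hence $\pi = \pi_1 \oplus \pi_2$ with $\pi_i$ acting on $\cL_i$.

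Next I would verify the two required properties. By construction $\pi_2(K)\eta \in \cL_1$ for every $K \in \cK$ and $\eta \in \cL_2$, while simultaneously $\pi_2(K)\eta \in \cL_2$ by reducibility, so $\pi_2(K) = 0$ for all $K \in \cK$; that is, $\pi_2$ annihilates $\cK$. On the other hand, $\pi_1|_\cK$ is non-degenerate by the definition of $\cL_1$. Invoking the classical structure theorem for non-degenerate $*$-representations of the algebra of compact operators, there is a cardinal $\alpha$ and a unitary $U : \cL_1 \to \cH \otimes \mathbb{C}^{\alpha}$ such that
\[
  U \pi_1(K) U^* = K \otimes I_{\mathbb{C}^{\alpha}} \quad (K \in \cK).
\]

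Finally, to promote this from $\cK$ to all of $\cB$, I would use the uniqueness of the extension of a non-degenerate representation from an essential ideal to the ambient $C^*$-algebra. Concretely, both $T \mapsto U \pi_1(T) U^*$ and $T \mapsto T \otimes I_{\mathbb{C}^{\alpha}}$ are $*$-representations of $\cB$ that agree on $\cK$; since $\cK$ acts non-degenerately and the extension of a non-degenerate ideal representation is determined by the formula $\widetilde{\pi}(T)\pi(K)\xi = \pi(TK)\xi$, the two extensions must coincide. Thus $\pi_1$ is unitarily equivalent to $\mathrm{id}_\cB^{(\alpha)}$, completing the proof. The only mildly delicate step is this last uniqueness argument, but it is a routine consequence of non-degeneracy and the $C^*$-identity, so there is no serious obstacle.
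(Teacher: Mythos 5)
Your argument is correct and is precisely the standard proof the paper invokes by citation (the discussion preceding Theorem I.3.4 in Arveson's book plus the structure of non-degenerate representations of $\cK$): split the representation space along the essential subspace of $\pi|_{\cK}$, note the complement kills $\cK$, and extend the identification of $\pi_1|_{\cK}$ with a multiple of the identity to all of $\cB$ via the formula $\widetilde\pi(T)\pi(K)\xi = \pi(TK)\xi$. The only cosmetic remark is that essentiality of $\cK$ in $\cB$ plays no role; all you use is that $\cK$ is an ideal of $\cB$ and that $\pi_1|_{\cK}$ is non-degenerate, which your uniqueness-of-extension step already makes clear.
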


\begin{proof}
  This follows from the discussion preceding Theorem I.3.4 in \cite{Arveson76}
  and the fact that every non-degenerate representation of $\cK$ is unitarily equivalent to a multiple of the identity
  representation.
\end{proof}

\subsection{Boundary representations and the $C^*$-envelope}
\label{S:prelimuep}
Let $\cA \subset B(\cH)$ be a unital operator algebra and $\psi:\cA\to B(\cE)$ be a unital completely contractive linear map.
As explained above, there exists a unital completely positive linear map $\pi:C^*(\cA)\to B(\cE)$ which agrees with $\psi$ on $\cA$. We say that $\psi$ has the \emph{unique extension property} if there is a unique such extension,
and this extension is a $*$-homomorphism.
An irreducible $*$-representation $\pi$ of $C^*(\cA)$ is said to be \emph{boundary representation for $\cA$} if $\pi|_{\cA}$ has the unique extension property. This idea was introduced by Arveson \cite{arveson1969} in analogy with the classical Choquet boundary of a uniform algebra.
It follows from \cite[Theorem 2.1.2]{arveson1969} that this notion does not depend on the concrete
embedding of $\cA$ into $B(\cH)$, and is thus a completely isometric invariant of $\cA$.
The algebra $\cA$ is said to be \emph{hyperrigid} \cite{Arveson11} if the restriction $\pi \big|_{\cA}$ has the unique extension property for every unital $*$-representation $\pi$ of $C^*(\cA)$.

Any operator algebra $\cA$ has sufficiently many
boundary representations in the sense that there exists a set of boundary representations whose
direct sum is completely isometric on $\cA$.
This was first proved by Arveson \cite{arveson2008} in the separable case,
and later established in full generality by Davidson and Kennedy \cite{DK2015}.

In general, there are many $C^*$-algebras which are generated by a completely isometric copy
of a unital operator algebra $\cA$, and accordingly it is natural to look for the smallest one in the following precise sense. The \emph{$C^*$-envelope of $\cA$} is a $C^*$-algebra
$C^*_e(\cA)$ together with a unital completely isometric linear map $\iota: \cA \to C^*_e(\cA)$ with the following two properties:
\begin{enumerate}
\item $C^*_e(\cA) = C^*(\iota(\cA))$,

\item whenever $j:\cA \to \cB$ is another unital completely isometric linear map and  $\cB=C^*(j(\cA))$, then
there exists a surjective $*$-homomorphism $\pi: \cB \to C^*_e(\cA)$ such that $\pi \circ j = \iota$.
\end{enumerate}
It follows from \cite[Theorem 2.1.2]{arveson1969}  
that if $\iota$ is a direct sum of boundary representations for $\cA$ which is completely
isometric on $\cA$, then
$C^*(\iota(\cA))$ is the $C^*$-envelope of $\cA$ (see also \cite{ArvesonUEP}).
In particular, if the identity representation of $C^*(\cA)$ on $B(\cH)$ is a boundary
representation for $\cA$, then $C^*_e(\cA) = C^*(\cA)$.
Even before it was known that there are sufficiently many boundary representations, these
ideas were used by Dritschel and McCullough to construct the $C^*$-envelope \cite{dritschel2005},
but the existence of such an object was proved yet earlier using different methods in \cite{hamana1979}.

\subsection{A sufficient condition for the unique extension property}
While we know that boundary representations are always plentiful as discussed above, determining when a given unital completely contractive representation has the unique extension property remains a difficult task in general. There are some instances however where this is possible, and we proceed to describe one that will be useful for us. For that purpose, recall that a commuting family $T = (T_1,T_2,\ldots)$
of operators on a Hilbert space is said to be a \emph{row contraction} if
\begin{equation*}
  \sum_{n} T_n T_n^* \le I,
\end{equation*}
where the series converges in the strong operator topology. Moreover,
a commuting tuple $U=(U_1,U_2,\ldots)$ of operators on a Hilbert space is said to be a
\emph{spherical unitary} if each $U_n$ is normal and
\begin{equation*}
  \sum_{n} U_n U_n^* = I,
\end{equation*}
where the series converges in the strong operator topology as well.
The following result follows from a theorem of Richter and Sundberg \cite{RS10} and is a generalization of \cite[Corollary 2.2]{KS2015}; the reader may also want to compare with \cite[Theorem 3.1.2]{arveson1969}. 

\begin{theorem}
  \label{T:sphunitary_uep}
  Let $T= (T_1,T_2,\ldots)$ be a commuting row contraction and let $\cA_T$ be
  the unital norm closed operator algebra generated by $\{T_n: n \ge 1\}$. Let $\rho: \cA_T \to B(\cH)$
  be a unital completely contractive linear map such that $(\rho(T_1),\rho(T_2),\ldots)$
  is a spherical unitary. Then $\rho$ has the unique extension property.
\end{theorem}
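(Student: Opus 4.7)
The strategy is to show that every unital completely positive extension of $\rho$ to $C^*(\cA_T)$ is automatically a $*$-homomorphism; since a $*$-homomorphism is determined by its values on the generators $\{T_n\}$, uniqueness is then immediate. Given such an extension $\pi: C^*(\cA_T) \to B(\cH)$ (whose existence is guaranteed by Arveson's extension theorem), Stinespring's dilation theorem produces a $*$-representation $\sigma: C^*(\cA_T) \to B(\cK)$ and an isometry $V: \cH \to \cK$ such that $\pi(x) = V^* \sigma(x) V$. The aim is to show that $V \cH$ is reducing for each $\sigma(T_n)$, which forces $\pi = \sigma|_{V\cH}$ to be a $*$-homomorphism.

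The first step is to upgrade the Schwarz inequalities $\pi(T_n T_n^*) \geq \pi(T_n) \pi(T_n)^*$ to equalities. Row contractivity gives $\pi\bigl(\sum_{n=1}^N T_n T_n^*\bigr) \le I$ for every $N$, while summing the Schwarz inequalities and using $\pi(T_n) = \rho(T_n)$ yields
\[
\sum_{n=1}^N \rho(T_n) \rho(T_n)^* \leq \pi\Bigl(\sum_{n=1}^N T_n T_n^*\Bigr) \leq I.
\]
The spherical unitary hypothesis forces the leftmost expression to converge strongly to $I$, hence so does the middle one. A squeeze argument based on the non-negativity of each summand $\pi(T_n T_n^*) - \pi(T_n)\pi(T_n)^*$ then delivers the desired equality $\pi(T_n T_n^*) = \pi(T_n) \pi(T_n)^*$ for every $n$.

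Applying Lemma \ref{L:hered_coinvariant} to these equalities shows that $V\cH$ is coinvariant for each $\sigma(T_n)$, and so with respect to the decomposition $\cK = V\cH \oplus (V\cH)^\perp$ the operator $\sigma(T_n)$ is block lower triangular with top-left corner $\rho(T_n)$. In particular, $(\sigma(T_n))_n$ is itself a commuting row contraction on $\cK$ whose compression to the coinvariant subspace $V\cH$ is the spherical unitary $(\rho(T_n))_n$. The theorem of Richter and Sundberg \cite{RS10} then forces $V\cH$ to actually reduce each $\sigma(T_n)$, so that $\pi = \sigma|_{V\cH}$ is a $*$-homomorphism.

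The main obstacle is this last step: using the spherical unitary hypothesis to promote coinvariance to reducibility. This is where the input of Richter and Sundberg does its real work, capturing the extremal nature of spherical unitaries among commuting row contractions. Everything else is a mostly routine combination of the Schwarz inequality with the structural content of Agler's lemma.
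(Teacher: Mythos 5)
Your proposal is correct and follows essentially the same route as the paper: a ucp extension plus a Stinespring dilation, the Schwarz inequality combined with the row-contraction/spherical-unitary squeeze to get $\pi(T_nT_n^*)=\pi(T_n)\pi(T_n)^*$, Agler's Lemma \ref{L:hered_coinvariant} for coinvariance, and the Richter--Sundberg theorem to upgrade coinvariance to reducibility, whence the extension is a $*$-homomorphism and unique. The only cosmetic difference is that the paper notes explicitly that the Richter--Sundberg result, stated for finite tuples, extends verbatim to infinite tuples.
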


\begin{proof}
  Let $\widehat \rho:C^*(\cA_T)\to B(\cH)$ be a unital completely positive extension of $\rho$, and let
  $\pi: C^*(\cA_T) \to B(\cL)$ be a Stinespring dilation of $\widehat \rho$.
  We will show that $\cH$ is reducing for $\pi(C^*(\cA_T))$, which implies
  that $\widehat \rho$ is itself a $*$-homomorphism and thus the unique
  completely positive extension of $\rho$.

First, we may use the Schwarz inequality for completely positive maps to obtain that $\rho(T_n)\rho(T_n)^*\leq \widehat{\rho}(T_nT_n^*)$ for every $n$. Thus we have
  \begin{equation*}
    I = \sum_n  \rho(T_n)\rho(T_n)^* \le \sum_n \widehat \rho(T_n T_n^*)
    \le I,
  \end{equation*}
  where the last inequality is due to the fact that $T$ is a row contraction while
  $\widehat \rho$ is unital and completely positive.
  Thus, equality holds throughout, so $\widehat \rho(T_n) \widehat \rho(T_n)^* = \widehat \rho(T_n T_n^*)$
  for all $n$. Lemma \ref{L:hered_coinvariant} now implies
  that $\cH$ is coinvariant for each $\pi(T_n)$,
  hence 
  \[
  \pi(T_n)^*|_\cH=\rho(T_n)^*
  \]
  for each $n$. We see that the commuting tuple $(\pi(T_1),\pi(T_2),\ldots)$ is a row contraction, and $(\pi(T_1)^*,\pi(T_2)^*,\ldots)$
  is an extension of the spherical unitary $(\rho(T_1)^*,\rho(T_2)^*,\ldots)$.
  It follows from \cite[Theorem 2.2]{RS10} that $\cH$ is also
  invariant for $\pi(T_n)$ for each $n $ (although that theorem is only
  stated for finite tuples of operators, it is clear that the proof
  extends verbatim to the case of infinite tuples). Therefore,
  $\cH$ reduces each $\pi(T_n)$, and hence reduces $\pi(C^*(\cA_T))$.
\end{proof}

We remark that the proof establishes in fact the following more general statement. Let $T = (T_1,T_2,\ldots)$
be a commuting row contraction and let $\cS_T$ denote the operator system generated by $T$. If $\rho: \cS_T \to
B(\cH)$ is a
unital completely positive map such that $(\rho(T_1),\rho(T_2),\ldots)$ is a
spherical unitary, then $\rho$ has the unique extension property.

\section{Completely contractive inclusions of multiplier algebras}\label{S:incl}

The aim of the next two sections is to address Question \ref{Q:intro1} for algebras of multipliers
of reproducing kernel Hilbert spaces.
Let $\cH_1$ and $\cH_2$ be two reproducing kernel Hilbert spaces
on the same set $X$ with respective kernels $k_1$ and $k_2$. In this section,
we aim to determine when $\Mult(\cH_1)$ is contained  completely contractively  in $\Mult(\cH_2)$. When this occurs,
the inclusion may be regarded as a particular representation of $\Mult(\cH_1)$.

There is a basic sufficient condition for the inclusion to be completely contractive, namely that $k_2/k_1$ be positive semi-definite on $X$. Indeed, this is an easy consequence of the Schur product theorem (see, for instance, the implication (ii) $\Rightarrow$ (i)
in \cite[Corollary 3.5]{hartz2015isom}). Interestingly, this positivity condition on the kernels is also sufficient for the inclusion  to admit a special kind of $B(\cH_1)$-coextension. This non-trivial fact is known, see \cite[Theorem 2]{douglas2012dilation} and
\cite[Corollary 6.2]{KSS+16}. Since the setting of these papers is slightly different, we provide a sketch of the proof of a statement adapted to our purposes.

\begin{theorem}\label{T:douglas}
Let $\cH_1$ and $\cH_2$ be reproducing kernel Hilbert spaces on the same set $X$ with respective kernels $k_1, k_2$.
Assume that $k_1$ is non-vanishing and that $k_2/k_1$ is positive semi-definite.
Then $\Mult(\cH_1) \subset \Mult(\cH_2)$ and
there exists a Hilbert space $\cE$ along with an isometric map $V:\cH_2\to \cH_1\otimes \cE$ with the property that
\[
V^*(M_{\phi}^{\cH_1}\otimes I_{\cE})=M_{\phi}^{\cH_2}V^*,
\]
for every $\phi \in \Mult(\cH_1)$. 
\end{theorem}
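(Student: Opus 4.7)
The plan is to construct the auxiliary Hilbert space $\cE$ from the kernel $\ell = k_2/k_1$ and to exhibit $V$ explicitly via its action on reproducing kernels. By the Moore--Aronszajn theorem, positivity of $\ell$ yields a reproducing kernel Hilbert space $\cF$ on $X$ with kernel $\ell$. I will take $\cE = \cF$ and use the feature map $\gamma : X \to \cE$ defined by $\gamma(x) = \ell(\cdot, x)$, which satisfies $\langle \gamma(x), \gamma(y)\rangle_{\cE} = \ell(y,x)$.

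Before turning to $V$, I will dispatch the inclusion $\Mult(\cH_1)\subset \Mult(\cH_2)$. Given $\phi \in \Mult(\cH_1)$, the matrix $\bigl[(\|M_\phi^{\cH_1}\|^2 - \phi(x_i)\overline{\phi(x_j)})\,k_1(x_i,x_j)\bigr]$ is positive semi-definite at any finite sample of points, and multiplying entrywise by the positive semi-definite matrix $[\ell(x_i,x_j)]$ (Schur product theorem) yields positivity of the same expression with $k_1$ replaced by $k_2 = k_1 \ell$. Hence $\phi \in \Mult(\cH_2)$ with $\|M_\phi^{\cH_2}\| \leq \|M_\phi^{\cH_1}\|$, and the same argument applied at the level of matrix-valued multipliers gives the complete contractivity of the inclusion.

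Next, I will define $V: \cH_2 \to \cH_1 \otimes \cE$ on the dense span of kernel functions by
\[
V\,k_2(\cdot, x) \;=\; k_1(\cdot, x) \otimes \gamma(x) \qquad (x \in X).
\]
A direct computation on inner products gives
\[
\langle V k_2(\cdot,x), V k_2(\cdot,y)\rangle \;=\; k_1(y,x)\,\ell(y,x) \;=\; k_2(y,x) \;=\; \langle k_2(\cdot,x), k_2(\cdot,y)\rangle,
\]
so $V$ extends to an isometry on $\cH_2$. For the intertwining property, it suffices to verify the equivalent identity $(M_\phi^{\cH_1}\otimes I_{\cE})^* V = V (M_\phi^{\cH_2})^*$ on kernels, which follows from the standard eigenvalue equation $(M_\phi^{\cH_j})^* k_j(\cdot,x) = \overline{\phi(x)}\,k_j(\cdot,x)$, since both sides reduce to $\overline{\phi(x)}\,k_1(\cdot,x) \otimes \gamma(x)$.

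The only subtle point will be bookkeeping: one must ensure that $V$ is densely defined (the linear span of the kernel functions is dense in $\cH_2$ because $\cH_2$ has no common zeros, which is automatic from $k_1$ non-vanishing and $\ell \geq 0$ — otherwise trivial cases need to be handled separately) and that the intertwining identity, once verified on the dense set of kernel functions, extends by continuity to all of $\cH_2$. Neither step presents a genuine obstacle; the whole argument is essentially the classical lurking isometry construction tailored to the factorization $k_2 = k_1 \cdot \ell$.
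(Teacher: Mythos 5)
Your proof is correct, and at its core it is the same lurking-isometry argument as the paper's: you define $V$ on kernel functions by $V k_2(\cdot,x) = k_1(\cdot,x)\otimes \gamma(x)$, where $\gamma$ is a feature map realizing the factorization $k_2 = k_1\,\ell$, and taking $\cE$ to be the reproducing kernel Hilbert space with kernel $\ell$ and $\gamma(x)=\ell(\cdot,x)$ is simply a concrete instance of the factorization theorem the paper invokes. Where you diverge is in how the containment $\Mult(\cH_1)\subset\Mult(\cH_2)$ is obtained. You prove it up front via the Schur product theorem (the standard argument, which the paper itself mentions just before the theorem), and you genuinely need it, because your verification of $(M_\phi^{\cH_1}\otimes I_{\cE})^*V = V(M_\phi^{\cH_2})^*$ uses the eigenvalue identity for $(M_\phi^{\cH_2})^*$ and hence presupposes that $M_\phi^{\cH_2}$ is a bounded operator. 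The paper instead extracts the containment from the dilation itself: the single computation $(M_\phi^{\cH_1}\otimes I_{\cE})^*Vk_2(\cdot,x) = \ol{\phi(x)}\,Vk_2(\cdot,x)$ shows the range of $V$ is coinvariant, so the bounded operator $V^*(M_\phi^{\cH_1}\otimes I_{\cE})^*V$ has every $k_2(\cdot,x)$ as an eigenvector with eigenvalue $\ol{\phi(x)}$, which simultaneously yields $\phi\in\Mult(\cH_2)$ with the norm bound and the intertwining relation. Both routes are valid; yours does a little extra work, while the paper's is more economical and makes visible that the multiplier containment is a consequence of the coextension rather than an input to it. One small correction: the density of the span of the kernel functions in $\cH_2$ is automatic in any reproducing kernel Hilbert space (a function orthogonal to all $k_2(\cdot,x)$ vanishes at every point and is therefore zero), and it has nothing to do with $\cH_2$ having no common zeros; your caveat there is unnecessary, though harmless.
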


\begin{proof}
  Since $k_2/k_1$ is positive, a well-known factorization theorem for positive semi-definite kernels (see \cite[Theorem 2.53]{AM2002})
  implies that there exists a Hilbert space $\cE$ and a function $f: X \to \cE$ such that
  \begin{equation*}
    k_2(x,y) = k_1(x,y) \langle f(y),f(x) \rangle
  \end{equation*}
  for $x,y \in X$. Therefore, there exists
  an isometry $V: \cH_2 \to \cH_1 \otimes \cE$ which satisfies
  \begin{equation*}
    V k_2(\cdot,x) = k_1(\cdot,x) \otimes f(x)
  \end{equation*}
  for all $x \in X$. A straightforward computation then shows that
for $\varphi \in \Mult(\cH_1)$ we have
  \begin{equation*}
    (M_\varphi^{\cH_1} \otimes 1_{\cE})^* V k_2(\cdot,x) = V \ol{\varphi(x)} k_2(\cdot,x)
  \end{equation*}
  for all $x \in X$. In particular, the range of $V$ is coinvariant for $M^{\cH_1}_\varphi \otimes I_{\cE}$, and
  the bounded operator $V^* (M_\varphi^{\cH_1} \otimes 1_{\cE})^* V$ on $\cH_2$ has $k_2(\cdot,x)$
  as an eigenvector with eigenvalue $\ol{\varphi(x)}$. Thus, we conclude that
  $\varphi \in \Mult(\cH_2)$ and that
  $V^* (M_\varphi^{\cH_1} \otimes 1_{\cE}) = M_\varphi^{\cH_2} V^*$.
\end{proof}

In the presence of the complete Nevanlinna-Pick property,
the last result fits in a chain of equivalences.
In particular, we obtain a positive answer to Question \ref{Q:intro1} for a special class
of representations of multiplier algebras of complete Nevanlinna-Pick spaces.

\begin{theorem}\label{T:equiv}
Let $\cH_1$ and $\cH_2$ be two reproducing kernel Hilbert spaces on the same set $X$
with reproducing kernels $k_1,k_2$, respectively. Suppose that $\cH_1$
is an irreducible complete Nevanlinna-Pick space. Then the following assertions are equivalent.
\begin{enumerate}[label=\normalfont{(\roman*)}]
  \item The inclusion $\Mult(\cH_1) \hookrightarrow \Mult(\cH_2)$ is completely contractive.
  \item The inclusion $\Mult(\cH_1)\hookrightarrow \Mult(\cH_2)$ admits a $B(\cH_1)$-coextension.
  \item $\Mult(\cH_1) \subset \Mult(\cH_2)$ and there exists a
    Hilbert space $\cE$ and an isometry $V: \cH_2 \to \cH_1 \otimes \cE$ such that
    $V^*(M_{\phi}^{\cH_1}\otimes I_{\cE})=M_{\phi}^{\cH_2}V^*$
    for every $\varphi \in \Mult(\cH_1)$.
  \item $k_2/k_1$ is positive semi-definite on $X$.
\end{enumerate}
\end{theorem}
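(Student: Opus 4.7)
My plan is to prove the cycle of implications $\mathrm{(iv)} \Rightarrow \mathrm{(iii)} \Rightarrow \mathrm{(ii)} \Rightarrow \mathrm{(i)} \Rightarrow \mathrm{(iv)}$. The first three are essentially formal; the last, which is the heart of the matter, uses the Agler-McCarthy realization of irreducible complete Nevanlinna-Pick kernels, together with a positive defect operator on $\cH_2$.

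The implication (iv) $\Rightarrow$ (iii) is exactly Theorem \ref{T:douglas}. For (iii) $\Rightarrow$ (ii), I consider the amplified $*$-representation $\pi \colon B(\cH_1) \to B(\cH_1 \otimes \cE)$ given by $\pi(A) = A \otimes I_{\cE}$; the intertwining identity $V^*(M_\varphi^{\cH_1} \otimes I_{\cE}) = M_\varphi^{\cH_2} V^*$ says precisely that the range of $V$ is coinvariant for each $\pi(M_\varphi^{\cH_1})$ and that the compression to this subspace coincides with $M_\varphi^{\cH_2}$, so $\pi$ is a $B(\cH_1)$-coextension of the inclusion. For (ii) $\Rightarrow$ (i), any $*$-representation is completely contractive and a coextension is in particular a completely contractive dilation.

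The substantive step is (i) $\Rightarrow$ (iv). By the Agler-McCarthy realization theorem applied to the irreducible complete Nevanlinna-Pick space $\cH_1$, after normalizing $k_1$ at a base point one can write
\[
k_1(x,y) = \frac{1}{1 - \sum_{i \in I} b_i(x) \overline{b_i(y)}}
\]
for some family $\{b_i\}_{i \in I}$ of multipliers of $\cH_1$ with $\sum_i |b_i(x)|^2 < 1$ on $X$ and $\sum_i M_{b_i}^{\cH_1}(M_{b_i}^{\cH_1})^* \le I$ in the strong operator topology. Since the inclusion $\Mult(\cH_1) \hookrightarrow \Mult(\cH_2)$ is completely contractive, each finite subrow $[M_{b_{i_1}}^{\cH_2}, \ldots, M_{b_{i_n}}^{\cH_2}]$ is contractive as a $1 \times n$ matrix-valued multiplier; passing to the SOT limit of the increasing net of finite partial sums yields $\sum_i M_{b_i}^{\cH_2}(M_{b_i}^{\cH_2})^* \le I$ on $\cH_2$. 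Set $D^2 := I - \sum_i M_{b_i}^{\cH_2}(M_{b_i}^{\cH_2})^* \ge 0$. Using $(M_{b_i}^{\cH_2})^* k_2(\cdot, y) = \overline{b_i(y)} k_2(\cdot, y)$, a direct computation gives
\[
\langle D^2 k_2(\cdot, y), k_2(\cdot, x) \rangle_{\cH_2} = k_2(x,y) \Big( 1 - \sum_i b_i(x) \overline{b_i(y)} \Big) = \frac{k_2(x,y)}{k_1(x,y)}.
\]
Since the left-hand side is the sesquilinear form of a positive operator evaluated on reproducing kernels, $k_2/k_1$ is positive semi-definite on $X$, establishing (iv).

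The main obstacle I anticipate is the transfer of the row-contraction from $\cH_1$ to $\cH_2$ when the index set $I$ is infinite: complete contractivity (rather than mere contractivity) of the inclusion is essential here, and one then needs a monotone convergence argument for the positive partial sums bounded above by $I$. A minor bookkeeping point is that the Agler-McCarthy normalization changes $k_1$ by a factor of the form $f(x)\overline{f(y)}$; since this factor is itself a rank-one positive semi-definite kernel, it is harmless for the PSD conclusion on the ratio $k_2/k_1$ via the Schur product theorem.
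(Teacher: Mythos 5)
Your proposal is correct, and its overall architecture coincides with the paper's: the cycle (iv) $\Rightarrow$ (iii) $\Rightarrow$ (ii) $\Rightarrow$ (i) $\Rightarrow$ (iv), with (iv) $\Rightarrow$ (iii) being Theorem \ref{T:douglas}, (iii) $\Rightarrow$ (ii) via the amplification $T \mapsto T \otimes I_{\cE}$ and the coinvariance of $V\cH_2$, and (ii) $\Rightarrow$ (i) trivial. The one genuine difference is the substantive implication (i) $\Rightarrow$ (iv): the paper disposes of it by rescaling $k_1$ to be normalized at a point and then citing the implication (i) $\Rightarrow$ (ii) of \cite[Corollary 3.5]{hartz2015isom} and its proof, whereas you give a self-contained argument via the Agler--McCarthy realization $k_1(x,y) = (1 - \sum_i b_i(x)\ol{b_i(y)})^{-1}$, the transfer of the contractive row $(M_{b_i})_i$ from $\cH_1$ to $\cH_2$ by complete contractivity of the inclusion (applied to finite subrows, with an SOT monotone-limit argument), and the positivity of the defect $D^2 = I - \sum_i M_{b_i}^{\cH_2}(M_{b_i}^{\cH_2})^*$ tested against kernel functions; this is essentially the argument that underlies the cited corollary, so you have in effect unpacked the black box rather than taken a different route. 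Your handling of the two delicate points is sound: the rescaling leaves $\Mult(\cH_1)$ and all matrix multiplier norms unchanged, so hypothesis (i) transfers to the normalized kernel, and the discrepancy $k_2/k_1 = \delta(x)\ol{\delta(y)}\,(k_2/\widetilde{k}_1)$ is repaired by the Schur product theorem since $\delta(x)\ol{\delta(y)}$ is a rank-one positive kernel (irreducibility guarantees $k_1(\cdot,x_0)$ is non-vanishing, so the rescaling is legitimate). The payoff of your version is self-containedness; the cost is reliance on the full strength of the McCullough--Quiggin/Agler--McCarthy characterization (\cite[Theorem 7.31]{AM2002}, \cite{AM2000}), which the paper's citation also implicitly uses, so nothing is lost in generality.
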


\begin{proof}
   Theorem \ref{T:douglas} shows that (iv) implies (iii).
   
   Suppose that (iii) holds and let
  \begin{equation*}
    \pi: B(\cH_1) \to B(\cH_1 \otimes \cE), \quad T \mapsto T \otimes I_{\cE}.
  \end{equation*}
  Then $\pi$ is a $*$-representation, and if we identify the space $\cH_2$ with
 $V\cH_2\subset \cH_1 \otimes \cE$, it becomes coinvariant
  under $\pi(M_\varphi^{\cH_1})$ and
  \[
    (M_{\phi}^{\cH_2})^*= (M_\varphi^{\cH_1} \otimes I_{\cE})^* |_{\cH_2} =
  \pi(M_{\phi}^{\cH_1})^*|_{\cH_2}
  \]
  for $\varphi \in \Mult(\cH_1)$. Thus, (ii) holds.

(ii) trivially implies (i). 

Finally,  we show that (i) implies (iv). Observe that both the hypothesis and the conclusion are invariant
  under rescaling of the kernel $k_1$, so we may assume that
  $k_1$ is normalized at a point (see \cite[Section 2.6]{AM2002}). In this setting, the statement  follows from the implication (i) $\Rightarrow$ (ii) of \cite[Corollary 3.5]{hartz2015isom}
  and its proof.
\end{proof}

For a given algebra of functions $\cM$ on a set $X$, there may be many different reproducing
kernel Hilbert spaces $\cH$ on $X$ whose multiplier algebra is $\cM$. For instance, the algebra
$H^\infty(\bD)$ of bounded analytic functions on $\bD$ is the multiplier algebra of the Hardy space, of the Bergman space and indeed of
many more spaces on the unit disc. If one of these spaces is an irreducible complete
Nevanlinna-Pick space $\cH_1$, then the
implication (i) $\Rightarrow$ (iii) of Theorem \ref{T:equiv} shows that
among all those spaces, $\cH_1$ is a particularly good choice
from the point of view of dilation theory. Indeed, for every $\varphi \in \cM$ and each of the spaces $\cH$,
the operator $M_\varphi^{\cH}$ coextends to $M_\varphi^{\cH_1} \otimes I_{\cE}$.
Therefore, given $\varphi \in \cM$, the collection of operators $T$
which can be modelled by $M_\varphi^{\cH}$, in the sense that $T$ coextends
to $M_\varphi^{\cH} \otimes I_{\cE}$, is maximal if $\cH = \cH_1$.

We also observe that in the proof of Theorem \ref{T:equiv}, the Nevanlinna-Pick property of $\cH_1$
 was only used to show the implication (i) $\Rightarrow$ (iv), while the implications
(iv) $\Rightarrow$ (iii), (iii)  $\Rightarrow$ (ii) and (ii)  $\Rightarrow$ (i) hold without it.
Moreover,
in many cases of interest, the equivalence of (iii) and (iv) also holds without
the Nevanlinna-Pick assumption on $\cH_1$,
see \cite[Theorem 2]{douglas2012dilation} and \cite[Corollary 6.2]{KSS+16}.
On the other hand, the following example shows that without the Nevanlinna-Pick assumption on $\cH_1$, the implications (i) $\Rightarrow$ (ii) (and therefore also (i) $\Rightarrow$ (iii) and (i) $\Rightarrow$ (iv))
may fail.

\begin{example}
  \label{E:Hardy_Bergman}
  Let $\cH_1 = A^2$ be the Bergman space on the unit disc whose reproducing kernel is
  \[
  k_1(z,w)= \frac{1}{(1-z \ol{w})^2}.
  \]
  Let $\cH_2 = H^2$ be the classical Hardy space on the unit disc.
  Then, both $\Mult(A^2)$ and $\Mult(H^2)$ coincide with the algebra $H^\infty(\bD)$. In particular,
the inclusion
  \begin{equation*}
  \Mult(A^2)\hookrightarrow \Mult(H^2) 
  \end{equation*}
  is completely contractive. Let $\pi: B(A^2) \to B(\cL)$ be a $*$-homomorphism which dilates the inclusion map
  (there always exists such a dilation by Theorem \ref{T:arv}).

  We claim that $H^2$ cannot be coinvariant for $\pi( \Mult(A^2))$, so that the inclusion does not admit a $C^*(\Mult(A^2))$-coextension. To this end,
  observe that the Bergman shift $B = M_z^{A^2}$ satisfies
  \begin{equation*}
    B^2 (B^*)^2 - 2 B B^* + I \ge 0.
  \end{equation*}
This relation is preserved by unital $*$-homomorphisms as well as by compressions to coinvariant subspaces. Let $S=M_z^{H^2}$ denote the Hardy shift. Since $S=P_{H^2}\pi(B)|_{H^2}$, if $H^2$ were coinvariant for $\pi(\Mult(A^2))$ we would have 
  \begin{equation*}
    S^2 (S^*)^2 - 2 S S^* + I  \ge 0,
  \end{equation*}
  which is easily verified to be false.
  Incidentally, this argument also shows that $S$ does not coextend to $B \otimes I_{\cE}$
  for some Hilbert space $\cE$.
 Finally, we note that
  \begin{equation*}
   k_2(z,w)/k_1(z,w) = 1 - z \ol{w},
 \end{equation*}
 which is not positive semi-definite on $\bD$, in accordance with Theorem \ref{T:douglas}.

 On the other hand, if we interchange the roles of Bergman space and Hardy space and set
 $\cH_1 = H^2$ and $\cH_2 = A^2$, then $\cH_1$ is a complete Nevanlinna-Pick space, the quotient  
 \begin{equation*}
   k_2 (z,w) / k_1(z,w) = \frac{1}{1 - z \ol{w}} = k_1(z,w)
 \end{equation*}
  is positive semi-definite, and thus
 the inclusion $ \Mult(H^2) \to \Mult(A^2)$ admits a $C^*(\Mult(H^2))$-coextension by Theorem \ref{T:equiv}.
 \qed
\end{example}

This example shows that the complete Nevanlinna-Pick property cannot simply be removed from the statement of Theorem \ref{T:equiv}. In fact, the next result shows that the validity of the implication
(i) $\Rightarrow$ (iv) characterizes complete Nevanlinna-Pick spaces.

\begin{theorem}\label{T:NPchar}
  Let $\cH_1$ be an irreducible reproducing kernel Hilbert space on a set $X$ with kernel $k_1$.
  Then the following assertions are equivalent.
  \begin{enumerate}[label=\normalfont{(\roman*)}]
    \item $\cH_1$ is a complete Nevanlinna-Pick space.
    \item Whenever $\cH_2$ is another reproducing kernel Hilbert space on $X$ with kernel $k_2$ such that the inclusion $\Mult(\cH_1)\hookrightarrow \Mult(\cH_2)$    is completely contractive, then the function $k_2 / k_1$ is positive semi-definite on $X$.
  \end{enumerate}
\end{theorem}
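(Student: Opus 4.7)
The implication (i) $\Rightarrow$ (ii) is immediate: assuming $\cH_1$ has the complete Nevanlinna-Pick property, the implication (i) $\Rightarrow$ (iv) of Theorem \ref{T:equiv} applied to the given $\cH_2$ yields that $k_2/k_1$ is positive semi-definite on $X$.

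For the converse (ii) $\Rightarrow$ (i), I would argue by contrapositive. Assume $\cH_1$ is irreducible but fails the complete Nevanlinna-Pick property. After rescaling we may assume that $k_1$ is normalized at some $x_0 \in X$, i.e.\ $k_1(\cdot, x_0) \equiv 1$. The McCullough-Quiggin / Agler-McCarthy characterization recalled in Section \ref{S:prelimRKHS} then gives that the kernel $F(z,w) := 1 - 1/k_1(z,w)$ is not positive semi-definite on $X$, so there are finitely many points $z_1, \ldots, z_n \in X$ such that the matrix $[F(z_i, z_j)]_{i,j=1}^n$ has a strictly negative eigenvalue. Equivalently, one obtains matrices $W_1, \ldots, W_n \in M_r(\bC)$ with $[k_1(z_i, z_j)(I - W_i W_j^*)]_{i,j=1}^n \geq 0$ but for which no contractive matrix-valued multiplier $\Phi$ of $\cH_1$ satisfies $\Phi(z_i) = W_i$.

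The task is to convert this finite obstruction into a reproducing kernel Hilbert space $\cH_2$ on $X$ with kernel $k_2$ such that the inclusion $\Mult(\cH_1) \hookrightarrow \Mult(\cH_2)$ is completely contractive while $k_2/k_1$ fails to be positive semi-definite. Paralleling Example \ref{E:Hardy_Bergman}, I expect $\cH_2$ to be a ``smaller'' space whose multiplier algebra contains $\Mult(\cH_1)$ and in which the unattainable Pick data becomes attainable. A natural route is via Arveson's theorem (Theorem \ref{T:arv}): construct a completely contractive representation $\rho : \Mult(\cH_1) \to B(\cE)$ equipped with joint eigenvectors $v_x \in \cE$ of the adjoints satisfying $\rho(M_\phi)^* v_x = \overline{\phi(x)} v_x$ for each $x \in X$ and each $\phi \in \Mult(\cH_1)$, and such that $\rho$ does \emph{not} admit a $B(\cH_1)$-coextension in the sense of Theorem \ref{T:equiv}(iii). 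Setting $k_2(z,w) := \langle v_w, v_z \rangle$ and defining $\cH_2$ as the closed span of $\{v_x\}_{x \in X}$, the inclusion $\Mult(\cH_1) \hookrightarrow \Mult(\cH_2)$ is automatically completely contractive, as it is a restriction of $\rho$. By the chain of equivalences in Theorem \ref{T:equiv}, the failure of the coextension property is precisely what prevents $k_2/k_1$ from being positive semi-definite.

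The hard part of the plan is producing such a representation $\rho$ from the finite obstruction, since complete contractivity is an infinite-dimensional condition (involving matrix-valued multipliers of all sizes) while the witness of non-CNP is finite. A natural attempt is to realize $\rho$ as the compression of the ambient representation of $\Mult(\cH_1)$ on $\cH_1 \otimes \cK$ to a subspace chosen so that the would-be Pick data $\{W_i\}_{i=1}^n$ is interpolated by a contractive multiplier of the compressed space, yet the subspace is \emph{not} coinvariant for $\Mult(\cH_1) \otimes I_\cK$ (which by Theorem \ref{T:equiv} would force $k_2/k_1$ to be positive). Producing such a subspace is the technical heart of the argument and will rely on using the non-positivity of $[F(z_i, z_j)]$ to obstruct the coinvariance while preserving the completely contractive structure.
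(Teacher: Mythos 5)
Your direction (i) $\Rightarrow$ (ii) is fine and is exactly what the paper does (it is the implication (i) $\Rightarrow$ (iv) of Theorem \ref{T:equiv}). The converse, however, is not proved: you reduce it to constructing a space $\cH_2$ (equivalently, a completely contractive representation $\rho$ with adjoint-eigenvectors but no $B(\cH_1)$-coextension) from the finite non-CNP witness, and then you explicitly defer that construction (``the technical heart of the argument''). That deferred step is the whole theorem; as written, the proposal contains no argument for (ii) $\Rightarrow$ (i). Worse, the specific route you sketch looks self-defeating: if you realize $\rho$ as a compression of $M_\varphi\otimes I_{\cK}$ on $\cH_1\otimes\cK$ to a subspace spanned by joint eigenvectors $v_x$ of the adjoints with eigenvalues $\ol{\varphi(x)}$, the natural such eigenvectors are of the form $k_1(\cdot,x)\otimes e_x$, and any subspace spanned by these is automatically coinvariant for $M_\varphi\otimes I_{\cK}$, which forces $k_2(z,w)=k_1(z,w)\langle e_w,e_z\rangle$ and hence $k_2/k_1$ positive semi-definite --- precisely what you are trying to avoid. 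So you would need eigenvectors not of kernel-tensor type and a genuinely non-coinvariant compression, and nothing in the proposal indicates how the finite failure of positivity of $1-1/k_1$ produces such an object while preserving complete contractivity on all matrix levels.

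The paper avoids all of this by proving (ii) $\Rightarrow$ (i) directly rather than by contrapositive. After normalizing $k_1$ at a point $x_0$, one sets $k_0=k_1-1$ (the kernel of the subspace $\cH_0$ of functions vanishing at $x_0$) and, for $0<\varepsilon\le 1$, considers the explicit kernels $k_\varepsilon=k_1-(1-\varepsilon)=k_0+\varepsilon$. For any $\Phi$ in the unit ball of $M_r(\Mult(\cH_1))$, both $k_1(I-\Phi\Phi^*)$ and $k_0(I-\Phi\Phi^*)$ are positive semi-definite (the latter because $\cH_0^{(r)}$ is invariant for $\Phi$), and
\[
k_\varepsilon\,(I-\Phi\Phi^*)=\varepsilon\, k_1(I-\Phi\Phi^*)+(1-\varepsilon)\,k_0(I-\Phi\Phi^*),
\]
so the inclusion $\Mult(\cH_1)\hookrightarrow\Mult(\cH_\varepsilon)$ is completely contractive for every $\varepsilon$, with no CNP hypothesis needed. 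Hypothesis (ii) then gives that $k_\varepsilon/k_1=1-(1-\varepsilon)/k_1$ is positive semi-definite, and letting $\varepsilon\to 0$ yields $1-1/k_1\ge 0$, which is the McCullough--Quiggin criterion for the complete Nevanlinna-Pick property. If you want to salvage your approach, you would have to supply a concrete construction playing the role of these $\cH_\varepsilon$; the convex-combination trick above is the missing idea.
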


\begin{proof}
 In view of Theorem \ref{T:equiv}, it remains to show that (ii) implies (i). Suppose thus that (ii) holds.
  By rescaling $k_1$ if necessary,
  we may without loss of generality assume that $k_1$ is normalized at a point
  $x_0 \in X$ so that $k_1(\cdot, x_0)=1$ (see \cite[Section 2.6]{AM2002} and note that the
  rescaling procedure preserves the complete Nevanlinna-Pick property).
  Define
    \begin{equation*}
    \cH_0 = \{f \in \cH_1 : f(x_0) = 0 \}.
  \end{equation*}
  Then $\cH_0$ is a closed subspace of $\cH_1$ and, with the Hilbert space structure inherited from
  $\cH_1$, it is a reproducing
  kernel Hilbert space on $X$.
  Since $1 = k_1(\cdot,x_0)$, we see that the reproducing
  kernel of $\cH_0$ is $k_0 = k_1 - 1$.
Some care must be taken here however, since the functions in $\cH_0$ share the common zero $x_0$,
  so that a multiplier on $\cH_0$ is not uniquely determined by its associated multiplication operator.
  Thus, $\cH_0$ does not fit into the framework of Section \ref{S:prelimRKHS}.
  
  To remedy this situation, we define for $0 < \varepsilon \le 1$ the kernel
  \begin{equation*}
    k_\varepsilon = k_1 - (1 - \varepsilon) = k_0 + \varepsilon
  \end{equation*}
  on $X$ and we let $\cH_\varepsilon$ be the reproducing kernel Hilbert space
  with kernel $k_\varepsilon$. Clearly, the kernel $k_\varepsilon$ does not vanish on the diagonal of $X\times X$, so
  that the functions in $\cH_\varepsilon$ have no common zeros. We claim that the inclusion $\Mult(\cH_1) \hookrightarrow \Mult(\cH_\varepsilon)$ is completely contractive. Let $r\in \bN$ and let $\Phi\in M_r(\Mult(\cH_1))$ with norm at most $1$. Then, $\Phi$ can be regarded as a contractive
  $M_r(\bC)$-valued multiplier on $\cH_1$, so that
  \begin{equation*}
    L_1(z,w) = k_1(z,w) (I_{\bC^r} -  \Phi(z) \Phi(w)^*)
  \end{equation*}
  is a positive semi-definite $M_r(\bC)$-valued kernel. It is clear that the restriction of $\Phi$ to the invariant subspace $\cH_0^{(r)}$ also has norm at most $1$, hence
  \begin{equation*}
    L_0(z,w) = k_0(z,w) (I_{\bC^r} - \Phi(z) \Phi(w)^*)
  \end{equation*}
  is also positive semi-definite. Therefore,
  \begin{equation*}
    k_\varepsilon(z,w) ( I_{\bC^r} - \Phi(z) \Phi(w)^*)
    = \varepsilon L_1(z,w) + (1 - \varepsilon) L_0(z,w)
  \end{equation*}
  is positive semi-definite, so that $\Phi$ belongs to the unit ball of $M_r(\Mult(\cH_\varepsilon ))$,
  which proves the claim.

Using (ii) with $\cH_2 = \cH_\varepsilon$ implies that
  \begin{equation*}
    1 - (1 - \varepsilon) \frac{1}{k_1} = \frac{k_\varepsilon}{k_1}
  \end{equation*}
  is positive semi-definite on $X$. Taking the limit as $\varepsilon \to 0$, we
  conclude that $1 - 1/k_1$ is positive semi-definite on $X$, so that $k_1$ is a complete Nevanlinna-Pick kernel
  by \cite[Theorem 7.31]{AM2002}.
\end{proof}

We close this section by mentioning a different interpretation of the condition (ii) in Theorem \ref{T:equiv},
which comes from Agler's work on hereditary polynomials.
Lemma \ref{L:hered_coinvariant} shows that we may reformulate the
condition as follows:
there exists a unital completely positive linear map $\Psi: B(\cH_1) \to B(\cH_2)$ such
 that
    \begin{equation*}
      \Psi(M_\varphi^{\cH_1} (M_\phi^{\cH_1})^*) = M_\varphi^{\cH_2} (M_\phi^{\cH_2})^*
          \end{equation*}
  for all $\varphi\in \Mult(\cH_1)$. In addition to Agler, this condition was also
  considered by Arveson \cite[Definition 6.1]{arveson1998}. In fact, it is possible to prove the implication (iv) $\Rightarrow$ (ii) in Theorem \ref{T:equiv}
 in a way similar to \cite{agler1982}, but this approach
does not yield the information that the $B(\cH_1)$-coextension can be chosen to be unitarily equivalent to a multiple
of the identity representation.

\section{Coextensions for unitarily invariant spaces}\label{S:coextue}
Whereas Section \ref{S:incl} was devoted to completely contractive inclusions of multiplier algebras of reproducing kernel Hilbert spaces, for the rest of the paper we will be interested in general representations of algebras of multipliers. In this section we once more examine Question \ref{Q:intro1}, this time for the algebras $A(\cH)$ associated
to unitarily invariant complete Nevanlinna-Pick spaces on the ball. This class of spaces is more restricted  than that considered in Section \ref{S:incl}, but the results we obtain about it are much stronger.

We will make use of a device which has its roots in the work of Agler \cite{agler1982}.
He showed that if $\cH$ is a reproducing kernel Hilbert space of analytic functions
on $\bD$ with kernel $k$, then under
suitable assumptions,
every operator $T$ which satisfies $\sigma(T) \subset \bD$ and
$1/k(T,T^*) \ge 0$ coextends to a
direct sum of copies of $M_z^{\cH}$ \cite[Theorem 2.3]{agler1982}.
The interpretation of $1/k(T,T^*)$ requires some care in the general case,
but we note that if $k$ is the kernel of the Hardy space $H^2(\bD)$
then $1/k(T,T^*) = I- T T^*$, so the positivity assumption simply means that $T$ is a contraction. These
ideas were extended to spaces on higher dimensional domains $\Omega \subset \bC^d$
by Agler and McCarthy \cite{AM00a}, by Ambrozie, Engli\v{s} and M\"uller \cite{ambrozie2002}
and by Arazy and Engli\v{s} \cite{AE2003}. As alluded to above, one difficulty with this
approach is to make sense of the expression $1/k(T,T^*)$ for general kernels $k$ and tuples
of commuting operators $T$.
In \cite{ambrozie2002}, a good definition is available whenever the Taylor spectrum of $T$ is contained in $\Omega$, or the function $1/k$ is a polynomial. This setting is generalized in \cite{AE2003} using
the notion of a $1/k$-calculus. Fortunately,
for unitarily invariant complete Nevanlinna-Pick spaces, there is
a straightforward way to interpret the condition $1/k(T,T^*) \ge 0$.

Let $\cH$ be a unitarily invariant complete Nevanlinna-Pick space on $\bB_d$ with kernel $k$ (see Subsection \ref{SS:uispaces} for details). Then, for $z,w\in \bB_d$ we have that
\[
k(z,w)=\sum_{n=0}^\infty a_n\langle z,w \rangle^n, \quad 1-\frac{1}{k(z,w)}=\sum_{n=1}^\infty b_n \langle z,w\rangle^n
\]
for some sequences $\{a_n\}_n, \{b_n\}_n$ of non-negative numbers with $a_0=1$ and $ a_n>0$ for every $n\in \bN$
(see Lemma \ref{L:NP-char}).
Alternatively, note that for $z,w\in \bB_d$ we have
\[
1-\frac{1}{k(z,w)}=\sum_{n=1}^\infty b_n  \sum_{|\alpha|=n} \binom{n}{\alpha} z^\alpha \ol{w}^\alpha
\]
where for a multi-index of non-negative integers $\alpha=(\alpha_1,\ldots, \alpha_d)$ we put
\[
\binom{n}{\alpha}=\frac{n!}{\alpha_1!\cdots \alpha_d! }.
\]
Given a commuting tuple of operators $T = (T_1,\ldots,T_d)$ on a Hilbert space,
we write $1/k(T,T^*)\geq 0$ to mean that
\begin{equation*}
  \sum_{n=1}^N b_n \sum_{|\alpha|=n} \binom{n}{\alpha} T^\alpha (T^*)^\alpha \le I
\end{equation*}
for all $N \in \bN$. Equivalently, $1/k(T,T^*)\geq 0$ if and only if
\begin{equation*}
  I - \sum_{n=1}^\infty b_n \sum_{|\alpha|=n} \binom{n}{\alpha} T^\alpha (T^*)^\alpha \ge 0
\end{equation*}
and the series converges in the strong operator topology. We remark that a similar idea to define $1/k(T,T^*) \ge 0$
already appeared in \cite[Section 3]{AE2003}.

\begin{remark}
  \label{rem:AEM}
The meaning that we assign to the inequality $1/k(T,T^*) \ge 0$ is consistent with that of Ambrozie-Engli\v{s}-M\"uller. This is manifest when $1/k$ is a polynomial. The other case that these authors consider is when $\sigma(T) \subset \bB_d$,
  where $\sigma(T)$ denotes the Taylor spectrum of $T$ (the book
  \cite{muller2007} is a standard reference on the topic of the Taylor spectrum). They define an operator $S = 1/k(T,T^*)$ by means of the Taylor functional calculus
  (see the discussion preceding \cite[Lemma 3]{ambrozie2002}). We claim that $S\geq 0$  if and only if $1/k(T,T^*) \ge 0$ in our sense.

First  observe that the series $\sum_{n=1}^\infty b_n \langle z,w \rangle^n$ converges
  uniformly on $\ol{\bB_d} \times \ol{\bB_d}$, as $\sum_{n=1}^\infty b_n \le 1$.
  Continuity of the Taylor functional calculus (see the discussion after  \cite[Lemma 3]{ambrozie2002})
  therefore shows that
  \begin{equation*}
    S = I - \sum_{n=1}^\infty b_n \sum_{|\alpha|=n} \binom{n}{\alpha} T^\alpha (T^*)^\alpha,
  \end{equation*}
  where the series converges in norm. In particular, $S$ is a positive operator if and only if
  \begin{equation*}
    \sum_{n=1}^\infty b_n \sum_{|\alpha| = n} \binom{n}{\alpha} T^\alpha (T^*)^\alpha \le I,
  \end{equation*}
  which is our definition of $1/k(T,T^*) \ge 0$.
  \qed
\end{remark}

We now record a simple but important fact (cf. the proof of \cite[Proposition 3.1]{AE2003}).

\begin{lemma}\label{L:1/k}
Let $\cH$ be a unitarily invariant complete Nevanlinna-Pick space on $\bB_d$ with kernel $k$ and let $M_z =  (M_{z_1},\ldots,M_{z_d})$. Then, $1/k(M_z,M_z^*)\geq 0$.
\end{lemma}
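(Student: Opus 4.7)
The plan is to test the desired inequality against kernel functions, where the action of the shifts is explicit. Let $A_N = \sum_{n=1}^N b_n \sum_{|\alpha|=n} \binom{n}{\alpha} M_z^\alpha (M_z^*)^\alpha$, which is a bounded self-adjoint positive operator on $\cH$. By Lemma \ref{L:NP-char}, $b_n \ge 0$, and to establish $1/k(M_z, M_z^*) \ge 0$ it is enough to show that $A_N \le I$ for every $N \in \bN$. Since $A_N$ is bounded and self-adjoint, and since finite linear combinations of kernel sections $k(\cdot, w)$ form a dense subspace of $\cH$, it suffices to verify $\langle A_N f, f \rangle \le \|f\|^2$ for every such combination.

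First, I would recall the standard identity $M_{z_j}^* k(\cdot, w) = \ol{w_j}\, k(\cdot, w)$, which iterates to $(M_z^*)^\alpha k(\cdot, w) = \ol{w}^\alpha k(\cdot, w)$ for every multi-index $\alpha$. Consequently, $M_z^\alpha (M_z^*)^\alpha k(\cdot, w)$ is the function $z \mapsto z^\alpha \ol{w}^\alpha k(z,w)$, and therefore
\[
(A_N k(\cdot, w))(z) = k(z,w) \sum_{n=1}^N b_n \langle z, w \rangle^n.
\]
Given $f = \sum_{j=1}^m c_j k(\cdot, w_j)$ with $w_j \in \bB_d$, the reproducing property yields
\[
\|f\|^2 - \langle A_N f, f \rangle = \sum_{j,k=1}^m c_j \ol{c_k}\, K_N(w_k, w_j),
\]
where $K_N(z,w) = k(z,w)\bigl(1 - \sum_{n=1}^N b_n \langle z, w \rangle^n\bigr)$. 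The claim thus reduces to the positive semi-definiteness of the kernel $K_N$ on $\bB_d$.

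For the final step, I would use Lemma \ref{L:NP-char} to rewrite
\[
K_N(z,w) = k(z,w) \left( \frac{1}{k(z,w)} + \sum_{n=N+1}^\infty b_n \langle z, w \rangle^n \right) = 1 + k(z,w) \sum_{n=N+1}^\infty b_n \langle z, w \rangle^n.
\]
The constant kernel $1$ is obviously positive semi-definite, while each summand $b_n\, k(z,w) \langle z, w \rangle^n$ is a non-negative scalar multiple of a product of positive semi-definite kernels on $\bB_d$, and hence is itself positive semi-definite by the Schur product theorem. It follows that $K_N$ is positive semi-definite, which completes the argument.

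The only mildly delicate point is the splitting of the series in the identity for $K_N$, which is legitimate because both $\sum_n b_n \langle z, w \rangle^n$ and $k(z,w)$ converge absolutely for $(z,w) \in \bB_d \times \bB_d$; beyond this, the proof is entirely a direct computation in the RKHS combined with the Schur product theorem.
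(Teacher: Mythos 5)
Your proposal is correct and follows essentially the same route as the paper: reduce $A_N \le I$ to positive semi-definiteness of the kernel $k(z,w)\bigl(1-\sum_{n=1}^N b_n\langle z,w\rangle^n\bigr)$ by testing on linear combinations of kernel functions, then observe (via Lemma \ref{L:NP-char}) that this kernel equals the constant kernel $1$ plus the positive semi-definite tail $k(z,w)\sum_{n>N} b_n\langle z,w\rangle^n$. This is exactly the paper's argument, with the domination in the order of kernels written out explicitly.
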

\begin{proof}
  Testing on linear combinations of kernel functions, it is routine to check that for each $N \in \bN$,
  the operator inequality
\[
I-\sum_{n=1}^N b_n \sum_{|\alpha|=n} \binom{n}{\alpha} M_z^\alpha (M_z^*)^\alpha\geq 0
\]
is equivalent to the function
\[
k(z,w) \left( 1 - \sum_{n=1}^N b_n \sum_{|\alpha|=n} \binom{n}{\alpha} z^\alpha \ol{w}^\alpha \right)
\]
being positive semi-definite on $\bB_d\times \bB_d$. But since $b_n \ge 0$ for all $n \in \bN$,
this function dominates
\[
k(z,w) \left( 1 - \sum_{n=1}^\infty b_n \sum_{|\alpha|=n} \binom{n}{\alpha} z^\alpha \ol{w}^\alpha \right)
\]
in the natural order of kernels, and the latter function is identically equal to $1$, and in particular
positive semi-definite.
\end{proof}

The following reformulation of the condition $1/k(T,T^*)\geq 0$ will prove to be convenient. For each multi-index $\alpha\in \bN^d \setminus \{0\}$, consider the polynomial
\begin{equation*}
  \psi_{k,\alpha} = b^{1/2}_{|\alpha|} \binom{|\alpha|}{\alpha}^{1/2} z^\alpha
\end{equation*}
and define the infinite operator tuple
\[
\psi_k(T)=(\psi_{k,\alpha}(T))_{\alpha \in \bN^d \setminus \{0\}}.
\]
It is readily verified that $1/k(T,T^*) \ge 0$ if and only if $\psi_k(T)$ is a row contraction. We emphasize here that this observation crucially uses the fact that the sequence $\{b_n\}_n$ is non-negative, which in turn stems from the complete Nevanlinna-Pick property by Lemma \ref{L:NP-char}. Consequently, we see that if $1/k(T,T^*)\geq 0$ and $\rho$ is a unital completely contractive representation of an operator algebra containing $T_1,\ldots,T_d$, then $1/k(\rho(T),\rho(T)^*)\geq 0$,
where $\rho(T)=(\rho(T_1),\ldots,\rho(T_d))$.

Some basic spectral information is encoded in the inequality $1/k(T,T^*)\geq 0$, as we now show.
As above, we denote by $\sigma(T)\subset \bC^d$ the Taylor spectrum of a commuting tuple of operators $T=(T_1,\ldots,T_d)$ on a Hilbert space.

\begin{lemma}
  \label{L:spectrum}
  Let $\cH$ be a unitarily invariant complete Nevanlinna-Pick
  space on $\bB_d$ with kernel $k$.
  Let $T = (T_1,\ldots,T_d)$ be a commuting tuple of operators on some Hilbert space. If $1/k(T,T^*) \ge 0$, then $\sigma(T) \subset \ol{\bB_d}$.
\end{lemma}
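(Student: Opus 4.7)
The plan is to argue by contradiction: assume $\sigma(T) \not\subset \ol{\bB_d}$ and pick $\lambda \in \sigma(T)$ of maximal norm, so that $\|\lambda\| > 1$ and $\lambda$ lies on the boundary of $\sigma(T)$. A classical fact for Taylor spectra of commuting tuples (see \cite{muller2007}) guarantees the existence of unit vectors $x_k$ in the underlying Hilbert space with $\|(T_j^* - \ol{\lambda_j}) x_k\| \to 0$ for each $j = 1, \ldots, d$. From the hypothesis $1/k(T,T^*) \ge 0$, the goal is then to extract a scalar inequality involving $\|\lambda\|^2$ that will contradict the standing assumption that $\sum a_n t^n$ has radius of convergence exactly $1$.

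Concretely, a routine induction on $|\alpha|$, using the commutativity and boundedness of the $T_j^*$, yields $(T^*)^\alpha x_k - \ol{\lambda}^\alpha x_k \to 0$ and hence $\|(T^*)^\alpha x_k\|^2 \to |\lambda^\alpha|^2$ for every multi-index $\alpha$. Evaluating the partial-sum inequality
\[
\sum_{n=1}^N b_n \sum_{|\alpha|=n} \binom{n}{\alpha} T^\alpha (T^*)^\alpha \le I
\]
on $x_k$, passing to the limit $k\to\infty$, and invoking the multinomial identity $\sum_{|\alpha|=n}\binom{n}{\alpha}|\lambda^\alpha|^2 = \|\lambda\|^{2n}$, one arrives at the scalar bound
\[
\sum_{n=1}^{\infty} b_n \|\lambda\|^{2n} \le 1.
\]

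Setting $r = \|\lambda\|^2 > 1$, I would complete the contradiction by splitting along the bounded/unbounded-kernel dichotomy of Subsection \ref{SS:an}. In the unbounded case, $\sum_{n \ge 1} b_n = 1$; since $r^n > 1$ for all $n \ge 1$ and at least one $b_n$ is strictly positive, $\sum b_n r^n > \sum b_n = 1$, contradicting the displayed inequality. In the bounded case, $\sum b_n < 1$, and the convergence of $\sum b_n r^n$ forces the radius of convergence of $h(t) = \sum b_n t^n$ to be strictly greater than $1$; the non-negativity of the $b_n$ then gives $|h(t)| \le h(|t|) \le h(1+\varepsilon) < 1$ on a slightly larger disc $|t| < 1+\varepsilon$, so $f(t) = 1/(1-h(t))$ extends analytically past the closed unit disc, contradicting the assumption that $f = \sum a_n t^n$ has radius of convergence exactly $1$.

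The main obstacle is this last subcase, where one must use both halves of the complete Nevanlinna-Pick hypothesis (non-negativity of the $b_n$, which powers the Pringsheim-type pointwise bound $|h(t)|\le h(|t|)$) together with the fact that the radius of convergence of $k$ is exactly one, rather than merely at least one. The initial reduction to approximate coeigenvectors for $T^*$ via the boundary inclusion for Taylor spectra is standard, but it is the key device that turns the operator inequality into a scalar one to which analytic-function techniques can be applied.
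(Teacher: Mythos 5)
There is a genuine gap at the very first reduction, namely the claim that a boundary point $\lambda$ of the Taylor spectrum automatically admits joint approximate coeigenvectors, i.e.\ unit vectors $x_k$ with $(T_j^*-\ol{\lambda_j})x_k\to 0$ for all $j$. This is true for $d=1$ (where $\partial\sigma(T)\subset\sigma_{ap}(T)\cap\sigma_{ap}(T^*)^*$), but it fails for commuting tuples. For example, take $d=2$ and $T=(S^*,0)$, where $S$ is the unilateral shift: then $\sigma(T)=\ol{\bD}\times\{0\}$, which is its own topological boundary in $\bC^2$, yet at the point $\lambda=(0,0)$ there are no unit vectors with $T_1^*x_k=Sx_k\to 0$, since $S$ is an isometry. (This tuple even satisfies $1/k(T,T^*)\ge 0$ for the Drury--Arveson kernel, so the hypothesis of the lemma does not rescue the claim at general boundary points.) Choosing $\lambda$ of maximal norm does not by itself repair this: what you would actually need is that the maximum of $\|\cdot\|$ over $\sigma(T)$ is attained at a point of the approximate defect spectrum (equivalently, of $\sigma_\pi(T^*)$ conjugated), and that requires a further argument, e.g.\ the spectral mapping/projection properties of the defect spectrum or the fact that it has the same polynomially convex hull as the Taylor spectrum --- none of which is what you cited. (Incidentally, approximate eigenvectors for $T$ rather than $T^*$ would also suffice via Cauchy--Schwarz, but the inclusion of $\partial\sigma(T)$ into the joint approximate point spectrum is false as well, by the companion example $(S,0)$.)

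The remainder of your argument is correct and coincides with the second half of the paper's proof: once one has a scalar point $\lambda$ with $\|\lambda\|>1$ satisfying $\sum_{n\ge 1} b_n\|\lambda\|^{2n}\le 1$, the non-negativity of the $b_n$ (the complete Nevanlinna--Pick hypothesis) and the identity $\sum_n a_nt^n=(1-\sum_n b_nt^n)^{-1}$ force the radius of convergence of $\sum_n a_nt^n$ to exceed $1$, a contradiction; your case split according to $\sum_n a_n=\infty$ or $<\infty$ is harmless but unnecessary, since the radius-of-convergence argument covers both cases. The paper produces the scalar point differently, and this is precisely how it sidesteps the problematic step: since $1/k(T,T^*)\ge 0$ is equivalent to $\psi_k(T)$ being a row contraction, the condition passes through any character $\chi$ of the Banach algebra generated by $T$, giving $\sum_n b_n\|\chi(T)\|^{2n}\le 1$ and hence $\|\chi(T)\|\le 1$; then \cite[Proposition 25.3]{muller2007} places the Taylor spectrum inside the set of character values. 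You should either adopt that route or supply the missing (and nontrivial) fact about where on the Taylor spectrum the norm maximum can be witnessed by approximate (co)eigenvectors.
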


\begin{proof}
Let $\chi$ be a character of the unital norm closed operator algebra generated by $T_1,\ldots,T_d$. We claim that 
\[
\chi(T) = (\chi(T_1),\ldots,\chi(T_d))   \in \ol{\bB_d}.
\]
Since $\chi$ is necessarily completely contractive, the discussion preceding the lemma shows that 
  \[
  1/k(\chi(T),\chi(T)^*) \ge 0.
  \]
  Thus,
  \begin{equation*}
    \sum_{n=1}^\infty b_n ||\chi(T)||^{2 n}
    = \sum_{n=1}^\infty b_n \sum_{|\alpha| = n} \binom{n}{\alpha} |\chi(T)^\alpha|^2 \le 1.
  \end{equation*}
 If $\|\chi(T)\|>1$, then the function $ (1 - \sum_{n=1}^\infty b_n t^n)^{-1}$
 is analytic in a disc centred at the origin of radius strictly greater than $1$. On the other hand, recall that
  \begin{equation*}
    \sum_{n=0}^\infty a_n t^n  = \frac{1}{1 - \sum_{n=1}^\infty b_n t^n}
  \end{equation*}
  and the series on the left-hand side
 is assumed to have radius of convergence $1$ (see Subsection \ref{SS:uispaces}), which is a contradiction. Thus, $\|\chi(T)\|\leq 1$. Finally, we may apply \cite[Proposition 25.3]{muller2007} to conclude that the Taylor spectrum $\sigma(T)$ must lie in $\ol{\bB_d}$.
\end{proof}

If $\cH$ is the Hardy space, then $A(\cH)$ coincides with the disc algebra, which provides
an alternate very concrete description of the elements of $A(\cH)$. In general, such a simple
description is not available. At the very least,
Lemmas \ref{L:1/k} and \ref{L:spectrum} show that $\sigma(M_z^{\cH}) \subset \ol{\bB_d}$,
and thus by the basic properties of the Taylor functional calculus we see that $A(\cH)$ contains
all functions which are analytic
in a neighbourhood of $\ol{\bB_d}$.

We now arrive at a central result of the paper.

\begin{theorem}
  \label{T:A_H_representation}
  Let $\cH$ be a unitarily invariant complete Nevanlinna-Pick space on $\bB_d$
  with reproducing kernel $k$. Let $T=(T_1,\ldots,T_d)$ be a commuting tuple of operators on a Hilbert space $\cE$.
Then, the following assertions are equivalent.
      
\begin{enumerate}[label=\normalfont{(\roman*)}]
\item The tuple $T$ satisfies $1/k(T,T^*)\geq 0$.

\item  There exists a unital completely contractive homomorphism 
\[
\rho: A(\cH) \to B(\cE)
\]
such that $\rho(M_z) = T$. 
 \end{enumerate}
 In this case, the map $\rho$ is necessarily unique and admits a $C^*(A(\cH))$-coextension.
\end{theorem}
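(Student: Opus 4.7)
The implication (ii) $\Rightarrow$ (i) is immediate from Lemma \ref{L:1/k}: that lemma shows $\psi_k(M_z)$ is a row contraction in $B(\cH)$, and entrywise application of the unital completely contractive homomorphism $\rho$ yields that $\psi_k(T) = \psi_k(\rho(M_z))$ is also a row contraction, which is precisely the condition $1/k(T,T^*) \geq 0$. Uniqueness of $\rho$ is equally clear, since $\rho(M_z) = T$ together with multiplicativity determines $\rho$ on the polynomial algebra $\bC[z_1,\ldots,z_d]$, which is norm-dense in $A(\cH)$.

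For the main implication (i) $\Rightarrow$ (ii) together with the existence of a $C^*(A(\cH))$-coextension, the plan is to build such a coextension directly via a Poisson-type isometry. Let $D := (1/k(T,T^*))^{1/2} \in B(\cE)$, let $\mathcal{F} := \overline{D\cE}$, and define $V \colon \cE \to \cH \otimes \mathcal{F}$ by
\[
Vx = \sum_{\alpha \in \bN^d} a_{|\alpha|}\binom{|\alpha|}{\alpha}\, z^\alpha \otimes D (T^*)^\alpha x.
\]
The argument then reduces to two claims: (a) $V$ is a well-defined isometry, and (b) $V T_j^* = (M_{z_j}^* \otimes I_{\mathcal F}) V$ for each $j$. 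Claim (b) will follow from a direct calculation using the action of $M_{z_j}^*$ on monomials (which is determined by $\|z^\alpha\|_\cH^2 = \alpha!/(a_{|\alpha|}|\alpha|!)$), combined with the combinatorial identity $\binom{|\beta|}{\beta}\beta_j/|\beta| = \binom{|\beta|-1}{\beta-e_j}$. Once both claims are established, the $*$-representation $\pi \colon C^*(A(\cH)) \to B(\cH \otimes \mathcal{F})$ defined by $X \mapsto X \otimes I_{\mathcal F}$ has $V\cE$ coinvariant for each $\pi(M_{z_j})$ and hence, by commutativity and density of polynomials, for all of $\pi(A(\cH))$; then $\rho(a) := V^* \pi(M_a) V$ defines a unital completely contractive multiplicative map $A(\cH) \to B(\cE)$ satisfying $\rho(M_z) = T$, with $\pi$ providing the required $C^*(A(\cH))$-coextension.

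The heart of the matter is thus claim (a), which via orthogonality of the monomials in $\cH$ reduces to the operator identity
\[
\sum_{\alpha \in \bN^d} a_{|\alpha|} \binom{|\alpha|}{\alpha}\, T^\alpha D^2 (T^*)^\alpha = I \quad \text{in SOT},
\]
the operator analogue of the scalar identity $k(z,w) \cdot (1/k(z,w)) = 1$. Introducing the completely positive maps $\Psi(X) := \sum_{j=1}^d T_j X T_j^*$ (so that $\Psi^n(X) = \sum_{|\alpha|=n}\binom{n}{\alpha} T^\alpha X (T^*)^\alpha$ by commutativity of $T$) and $\Phi(X) := \sum_{n \geq 1} b_n \Psi^n(X)$, the hypothesis (i) reads $\Phi(I) = I - D^2$, and the target identity reads $\sum_{n \geq 0} a_n \Psi^n(D^2) = I$ strongly. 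The recursion $a_p = \sum_{m=1}^p b_m a_{p-m}$ (with $a_0 = 1$), coming from the power-series identity $\bigl(\sum a_n t^n\bigr)\bigl(1 - \sum_{n \geq 1} b_n t^n\bigr) = 1$, lifts to the operator recursion $S_N = D^2 + \sum_{m=1}^N b_m \Psi^m(S_{N-m})$ for the partial sums $S_N := \sum_{n=0}^N a_n \Psi^n(D^2)$; crucially, the non-negativity of the $b_n$ provided by the complete Nevanlinna-Pick property (Lemma \ref{L:NP-char}) forces the $S_N$ to be positive operators bounded above by $I$, hence to converge in SOT to some $0 \leq S \leq I$ satisfying $S = D^2 + \Phi(S)$. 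Subtracting the companion identity $I = D^2 + \Phi(I)$, the deficit satisfies the fixed-point equation $(I - S) = \Phi(I - S)$. I expect the main remaining difficulty to lie in the case where $D$ fails to have dense range, for instance when $T$ is a spherical unitary so that $D = 0$ and the Poisson isometry degenerates to zero; in that case I would complement the construction by extracting a spherical unitary summand of $T$ and applying Theorem \ref{T:sphunitary_uep} to produce a $*$-representation on that summand, then assembling the two pieces into a single coextension of the full tuple $T$.
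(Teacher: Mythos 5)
Your reduction of (i) $\Rightarrow$ (ii) to the single operator identity $\sum_{\alpha} a_{|\alpha|}\binom{|\alpha|}{\alpha}T^\alpha D^2 (T^*)^\alpha = I$ is correct, and your recursion argument (using $a_p=\sum_{m=1}^p b_m a_{p-m}$ together with $b_n\ge 0$) does show that the partial sums increase to a limit $S$ with $0\le S\le I$ and $I-S=\Phi(I-S)$; combined with the intertwining computation, this proves that $V$ is a well-defined \emph{contraction} with $V^*V=S$ and $VT_j^*=(M_{z_j}^*\otimes I_{\mathcal F})V$. The gap is that $S=I$, i.e.\ $V$ being an isometry, is a genuine purity hypothesis that does not follow from (i), and your proposed remedy --- ``extracting a spherical unitary summand of $T$'' --- cannot work, because $T$ need not have any reducing subspace on which it acts as a spherical unitary. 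Concretely, take $d=1$, $\cH=H^2(\bD)$ and $T=M_z^*$ the backward shift: then $1/k(T,T^*)=I-TT^*=I-M_z^*M_z=0$, so $D=0$ and your $V$ is the zero map, yet $T$ is a completely non-unitary contraction and has no unitary summand at all. The unitary part of the eventual coextension lives only on the larger space, not inside $T$, so it cannot be split off from $T$ and reassembled as you describe; treating the non-pure case honestly requires constructing that boundary piece on a new Hilbert space built from the defect $I-S$ (\`a la M\"uller--Vasilescu/Arveson), which is substantial extra work.

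The paper avoids this difficulty by working at the level of completely positive maps rather than isometries. By Agler's hereditary lemma (Lemma \ref{L:hered_coinvariant}), it suffices to produce a unital completely positive map $\psi:C^*(A(\cH))\to B(\cE)$ with $\psi(M_{z_j})=T_j$ and $\psi(M_{z_j}M_{z_j}^*)=T_jT_j^*$; a Stinespring dilation of $\psi$ is then automatically a coextension of $\rho=\psi|_{A(\cH)}$. When $\sigma(T)\subset\bB_d$ such a $\psi$ is obtained from the Ambrozie--Engli\v{s}--M\"uller model theorem \cite{ambrozie2002} (the spectral hypothesis is precisely what supplies the purity your construction is missing); in general one only knows $\sigma(T)\subset\ol{\bB_d}$ by Lemma \ref{L:spectrum}, and the paper applies the first case to $rT$ for $0<r<1$ (noting $1/k(rT,rT^*)\ge 0$ because $b_n\ge 0$) and takes a BW-cluster point of the maps $\psi_r$ --- compactness is available for completely positive maps where it is not for isometries. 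If you wish to keep your direct Poisson-map construction, it could replace the citation to \cite{ambrozie2002} in the regime where the spectral condition yields $S=I$, but you would still need the scaling-and-weak-limit step (or an actual construction of the spherical unitary piece from $I-S$) to cover the general case; your (ii) $\Rightarrow$ (i) and uniqueness arguments are fine and agree with the paper.
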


\begin{proof}
 By  Lemma \ref{L:1/k} we have that $1/k(M_z,M_z^*) \ge 0$, so the discussion following that same lemma shows that (ii) implies (i).

Assume henceforth that (i) holds so that $1/k(T,T^*)\geq 0$. If a map $\rho$ with the required properties exists, it must be unique since $A(\cH)$ is generated
  by $M_{z_1},\ldots,M_{z_d}$ as a Banach algebra. To establish the existence
  of $\rho$, it suffices to show that there exists a unital completely positive linear map 
  \[
  \psi: C^*(A(\cH)) \to B(\cE)
  \]
  with $\psi(M_{z_j}) = T_j$ and
  $\psi(M_{z_j} M_{z_j}^*) = T_j T_j^*$ for each $j=1,\ldots,d$.
For then Lemma \ref{L:hered_coinvariant} shows
  that if 
  \[
  \pi: C^*(A(\cH)) \to B(\cL)
  \]
  is a Stinespring dilation of $\psi$, then
  $\cE$ is coinvariant for $\pi(A(\cH))$. Therefore, the map $\rho=\psi|_{A(\cH)}$ 
has all the required properties.  It thus remains to construct the map $\psi$. We note that $\sigma(T)\subset\ol{\bB_d}$ by Lemma \ref{L:spectrum}. 
  
  We first
  consider the case where $\sigma(T) \subset \bB_d$. In this setting,
  \cite[Theorems 5 and 6]{ambrozie2002} combined with Remark \ref{rem:AEM} imply
  that there exists an isometry 
  \[
  V: \cE \to \cH \otimes \cE
  \]
  such that $V T_j^* = (M_{z_j}^*\otimes I_\cE) V$ for all $j=1,\ldots,d$.
  As in the proof of the implication (iii) $\Rightarrow$ (ii) of Theorem \ref{T:equiv},
  it follows  that there is a unital $*$-representation $\pi$ of $C^*(A(\cH))$
  such that $\cE$ is coinvariant for $\pi(A(\cH))$
  and 
  \[
  P_{\cE} \pi(M_{z_j}) \big|_{\cE} = T_j  \quad (j=1,\ldots,d).
  \]
  Thus, the compression of $\pi$ to $\cE$ yields the desired
map $\psi$ by virtue of Lemma \ref{L:hered_coinvariant}.

  Finally, we treat the general case where $\sigma(T) \subset \ol{\bB_d}$. For each $0<r<1$ we consider the operator tuple $rT=(rT_1,\ldots,rT_d)$. We have $\sigma(rT)\subset \bB_d$ and it is easy to verify that the assumption $1/k(T,T^*) \ge 0$ implies that $1/k(r T,rT^*) \ge 0$ since $b_n \ge 0$ for all $n \ge 1$.
  By the previous paragraph, for each $0<r<1$ we obtain a unital completely positive map 
  \[
  \psi_r: C^*(A(\cH)) \to B(\cE)
  \]
  such that $\psi_r(M_{z_j}) = r T_j$ and $\psi_r(M_{z_j} M_{z_j}^*) = r^2 T_j T_j^*$ for each $j$. Taking a cluster point of the net $\{\psi_r\}_r$ in an appropriate weak topology (the so-called BW-topology, see \cite[Lemma 7.2]{paulsen2002} for instance) yields the required map $\psi$.
\end{proof}

As an immediate consequence, we can provide a positive answer to Question \ref{Q:intro1}
for all representations of the algebra $A(\cH)$ associated to a unitarily invariant complete Nevanlinna-Pick space on the ball.

\begin{corollary}
  \label{cor:automatic_coextension}
  Let $\cH$ be a unitarily invariant complete Nevanlinna-Pick space on $\bB_d$. Then
  every unital completely contractive representation of $A(\cH)$ admits a $C^*(A(\cH))$-coextension. 
  \qed
\end{corollary}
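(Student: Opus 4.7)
The strategy is to reduce this corollary directly to Theorem \ref{T:A_H_representation}. Given any unital completely contractive representation $\rho: A(\cH) \to B(\cE)$, I form the commuting tuple $T = (\rho(M_{z_1}), \ldots, \rho(M_{z_d}))$ on $\cE$ and attempt to verify hypothesis (i) of Theorem \ref{T:A_H_representation}, that is, $1/k(T,T^*) \geq 0$. Once this is in hand, that theorem produces a unital completely contractive homomorphism $\rho': A(\cH) \to B(\cE)$ with $\rho'(M_z) = T$ which admits a $C^*(A(\cH))$-coextension; since $A(\cH)$ is generated as a Banach algebra by $M_{z_1}, \ldots, M_{z_d}$, the uniqueness clause forces $\rho = \rho'$, and the result follows.

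The work therefore lies in showing that $T$ satisfies $1/k(T,T^*) \geq 0$. The cleanest route uses the row-contraction reformulation introduced immediately after Lemma \ref{L:1/k}: $1/k(T,T^*) \geq 0$ if and only if the infinite tuple $\psi_k(T) = (\psi_{k,\alpha}(T))_{\alpha \in \bN^d \setminus \{0\}}$ is a row contraction, where $\psi_{k,\alpha} = b_{|\alpha|}^{1/2} \binom{|\alpha|}{\alpha}^{1/2} z^\alpha$ is a polynomial. By Lemma \ref{L:1/k}, the tuple $\psi_k(M_z)$ is a row contraction in $A(\cH)$, so in particular for every finite $N$ the row $(\psi_{k,\alpha}(M_z))_{|\alpha| \leq N}$ has norm at most $1$. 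Applying $\rho$, which is completely contractive, each finite truncation of $(\psi_{k,\alpha}(T))_\alpha = (\rho(\psi_{k,\alpha}(M_z)))_\alpha$ is again a row contraction. Hence the partial sums of $\sum_\alpha \psi_{k,\alpha}(T) \psi_{k,\alpha}(T)^*$ form an increasing net of positive operators bounded above by $I$, so they converge in the strong operator topology to an operator dominated by $I$. This is exactly the statement that $\psi_k(T)$ is a row contraction, hence $1/k(T,T^*) \geq 0$.

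The main subtlety to watch for is that $\psi_k$ is an infinite tuple while complete contractivity is a statement about finite matrix amplifications; the argument above handles this by passing through the truncations and using the monotone convergence of positive operators in strong operator topology, which is valid precisely because the coefficients $b_n$ are non-negative (a consequence of the complete Nevanlinna-Pick property via Lemma \ref{L:NP-char}). No further work is required: invoking Theorem \ref{T:A_H_representation} and its uniqueness assertion closes the argument.
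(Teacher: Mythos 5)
Your proof is correct and follows essentially the same route as the paper: the corollary is an immediate application of Theorem \ref{T:A_H_representation}, since any unital completely contractive representation $\rho$ of $A(\cH)$ with $T=\rho(M_z)$ already witnesses condition (ii) of that theorem. Your explicit verification of $1/k(T,T^*)\ge 0$ via the row-contraction reformulation and Lemma \ref{L:1/k} is sound, but it merely reproves the implication (ii) $\Rightarrow$ (i) that the theorem itself already supplies, so it is redundant rather than a genuinely different argument.
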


Example \ref{E:Hardy_Bergman} shows that this corollary may fail without the complete Nevanlinna-Pick property, as the Bergman space $A^2$ is a unitarily invariant space on the disc. Specifically, the implication (ii) $\Rightarrow$ (i)
of Theorem \ref{T:A_H_representation} fails in this case, since $1/k_{A^2}(T,T^*) = 1 - 2 T T^* + T^2 (T^*)^2$ was seen not to be positive if $T$ is the Hardy shift. We will see in Section \ref{sec:NP_nec} that at least for regular spaces, the last corollary
in fact characterizes the complete Nevanlinna-Pick property.

We finish this section by deducing an operator theoretic dilation theorem from
Theorem \ref{T:A_H_representation}.
The first part of this dilation theorem is very much in the spirit
of the work of Agler \cite{agler1982}, Agler and McCarthy \cite{AM00a},
Ambrozie, Engli\v{s} and M\"uller \cite{ambrozie2002} and Arazy and Engli\v{s} \cite{AE2003}.
The Nevanlinna-Pick property
once again guarantees that the existence of dilations and of coextensions is equivalent.
If $\cH = H^2_d$, then the next result is the theorem of M\"uller and Vasilescu \cite{muller1993} and
Arveson \cite{arveson1998}, which was mentioned in the introduction.
Recall from Subsection \ref{ss:regular}
that a unitarily invariant space with kernel $\sum_{n=0}^\infty a_n \langle z,w \rangle^n$ is regular if $\lim_{n \to \infty} a_n / a_{n+1} = 1$.

\begin{theorem}
  \label{T:1/k_dilation}
  Let $\cH$ be a regular unitarily invariant complete Nevanlinna-Pick space on $\bB_d$
  with reproducing kernel $k$ and let $T$ be a commuting
  tuple of operators on a Hilbert space $\cE$. Then the following assertions
  are equivalent.
  \begin{enumerate}[label=\normalfont{(\roman*)}]
\item The tuple $T$ satisfies $1/k(T,T^*) \ge 0$.
    \item
 The tuple $T$ coextends
  to $(M_z^{\cH})^{(\kappa)} \oplus U$ for some cardinal $\kappa$
  and a spherical unitary $U$.
\item
  The tuple $T$ dilates
  to $(M_z^{\cH})^{(\kappa)} \oplus U$ for some cardinal $\kappa$
  and a spherical unitary $U$.
  \end{enumerate}
\end{theorem}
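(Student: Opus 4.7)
The plan is to use the general machinery on coextensions from Theorem \ref{T:A_H_representation} together with the structural information on $C^*(A(\cH))$ provided by Theorem \ref{T:essentiallynormalan} and Lemma \ref{lem:rep_split}. The implication (ii) $\Rightarrow$ (iii) is trivial.

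For (iii) $\Rightarrow$ (i), I would first observe that the tuple $S = (M_z^{\cH})^{(\kappa)} \oplus U$ itself satisfies $1/k(S,S^*) \ge 0$. Indeed, $1/k(M_z^{\cH},(M_z^{\cH})^*) \ge 0$ is exactly Lemma \ref{L:1/k}, and for a spherical unitary $U = (U_1,\ldots,U_d)$, the commuting normal operators satisfy $\sum_j U_j U_j^* = I$. Using Fuglede's theorem, it follows that $\sum_{|\alpha|=n} \binom{n}{\alpha} U^\alpha (U^*)^\alpha = \bigl(\sum_j U_j U_j^*\bigr)^n = I$, so that $\sum_{n=1}^\infty b_n \sum_{|\alpha|=n} \binom{n}{\alpha} U^\alpha (U^*)^\alpha = \bigl(\sum_{n=1}^\infty b_n\bigr) I \le I$ because $\sum_n b_n \le 1$. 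Consequently $1/k(S,S^*) \ge 0$, so by Theorem \ref{T:A_H_representation}, the assignment $M_z \mapsto S$ extends to a unital completely contractive representation $\sigma:A(\cH)\to B(\cL)$. The dilation relation ensures that $\rho(a) = P_{\cE}\sigma(a)|_{\cE}$ is a unital homomorphism (by Sarason's theorem the intermediate subspace is semi-invariant), and it is completely contractive as the compression of a completely contractive map. Applying the reverse direction of Theorem \ref{T:A_H_representation} to $\rho$ yields $1/k(T,T^*) \ge 0$.

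For the main implication (i) $\Rightarrow$ (ii), I would start by invoking Theorem \ref{T:A_H_representation}: from $1/k(T,T^*) \ge 0$ we obtain a unital completely contractive representation $\rho:A(\cH)\to B(\cE)$ sending $M_z$ to $T$ that coextends to a $*$-representation $\pi: C^*(A(\cH)) \to B(\cL)$ of the ambient $C^*$-algebra. By regularity of $\cH$ and Theorem \ref{T:essentiallynormalan}, the algebra $C^*(A(\cH))$ contains $\cK(\cH)$ and fits in the short exact sequence with quotient $C(\partial \bB_d)$. Lemma \ref{lem:rep_split} therefore splits $\pi = \pi_1 \oplus \pi_2$, where $\pi_1$ is unitarily equivalent to a multiple of the identity representation, say on a Hilbert space of multiplicity $\kappa$, and $\pi_2$ annihilates $\cK(\cH)$. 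Under the unitary equivalence of $\pi_1$ with $\mathrm{id}^{(\kappa)}$, we have $\pi_1(M_{z_j}) \cong (M_{z_j}^{\cH})^{(\kappa)}$. On the other hand, $\pi_2$ factors through $C^*(A(\cH))/\cK(\cH) \cong C(\partial \bB_d)$, so the tuple $U := (\pi_2(M_{z_1}), \ldots, \pi_2(M_{z_d}))$ consists of commuting normal operators; moreover, since Theorem \ref{T:essentiallynormalan} states that $I - \sum_j M_{z_j}M_{z_j}^* \in \cK(\cH)$, we get $\sum_j U_j U_j^* = \pi_2(I) = I$, showing that $U$ is a spherical unitary. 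Combining these pieces, $\pi(M_z)$ is unitarily equivalent to $(M_z^{\cH})^{(\kappa)} \oplus U$, and the coextension property of $\pi$ over $\rho$ yields exactly the coextension in (ii).

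The hard part is really packaged inside Theorem \ref{T:A_H_representation} (building the $C^*(A(\cH))$-coextension from the positivity hypothesis); once this is available, the remaining work in this theorem is to read off the concrete form of the coextending tuple, and the only non-formal input needed for that is the essential normality / short exact sequence for regular unitarily invariant spaces (Theorem \ref{T:essentiallynormalan}), which precisely forces the non-identity part of $\pi$ to be a spherical unitary. The minor subtleties to be careful about are the computation that spherical unitaries satisfy $1/k(U,U^*) \ge 0$ (requiring $\sum_n b_n \le 1$, a consequence of the complete Nevanlinna-Pick characterisation in Lemma \ref{L:NP-char}) and the fact that compressing a completely contractive homomorphism along a dilation remains a completely contractive homomorphism.
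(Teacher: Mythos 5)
Your proposal is correct, and for the two substantial directions it matches the paper's strategy: (i) $\Rightarrow$ (ii) is exactly the paper's argument (apply Theorem \ref{T:A_H_representation} to get a $C^*(A(\cH))$-coextension $\pi$, then split $\pi = \pi_1 \oplus \pi_2$ via Theorem \ref{T:essentiallynormalan} and Lemma \ref{lem:rep_split}, with $\pi_2(M_z)$ a spherical unitary because $I - \sum_j M_{z_j}M_{z_j}^*$ is compact), and (ii) $\Rightarrow$ (iii) is trivial in both. Where you genuinely diverge is (iii) $\Rightarrow$ (i): the paper uses Theorem \ref{T:essentiallynormalan} again, noting that the spherical unitary $U$ induces a $*$-representation of $C(\partial\bB_d)$ and hence that there is a $*$-representation of $C^*(A(\cH))$ sending $M_z$ to $(M_z^{\cH})^{(\kappa)} \oplus U$, whose compression to $\cE$ is then fed into the (ii) $\Rightarrow$ (i) direction of Theorem \ref{T:A_H_representation}. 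You instead verify $1/k(S,S^*) \ge 0$ for the dilating tuple $S$ directly -- Lemma \ref{L:1/k} for the $(M_z^{\cH})^{(\kappa)}$ summand and the Fuglede/multinomial computation with $\sum_n b_n \le 1$ for the spherical unitary, which is precisely the computation appearing in the proof of Proposition \ref{P:UEP_bigspace} -- and then apply the (i) $\Rightarrow$ (ii) direction of Theorem \ref{T:A_H_representation} to $S$ before compressing. Both routes are sound (the compression is multiplicative by Sarason semi-invariance, a point the paper also uses implicitly); your version has the small advantage of not needing regularity, nor the short exact sequence, for the implication (iii) $\Rightarrow$ (i), at the cost of redoing the spherical-unitary positivity estimate, while the paper's version is shorter given that Theorem \ref{T:essentiallynormalan} is already in hand.
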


\begin{proof}
 If (i) holds, then Theorem
  \ref{T:A_H_representation} yields a $*$-representation $\pi: C^*(A(\cH)) \to B(\cL)$
  such that $\cE$ is co-invariant for $\pi(A(\cH))$ and $P_{\cK} \pi(M^{\cH}_z) \big|_{\cK} = T$.
  By Lemma \ref{lem:rep_split} and Theorem \ref{T:essentiallynormalan},
  the representation $\pi$ splits as $\pi = \pi_1 \oplus \pi_2$,
  where $\pi_1(M^{\cH}_z)$ is unitarily equivalent to $(M^{\cH}_z)^{(\kappa)}$ for some cardinal $\kappa$
  and $U = \pi_2(M^{\cH}_z)$ is a spherical unitary. Thus (ii) follows.

It is trivial that (ii) implies (iii). 

Finally, assume that (iii) holds. Observe that the spherical unitary $U$ gives rise
  to a $*$-representation of $C(\partial \bB_d)$ which maps $z$ to $U$, hence
  Theorem \ref{T:essentiallynormalan} implies that
  there exists a $*$-representation $\pi$ of $C^*(A(\cH))$ which maps $M^{\cH}_z$
  to $(M^{\cH}_z)^{(\kappa)} \oplus U$. The compression of $\pi$ to $\cE$ is then a unital completely
  contractive representation of $A(\cH)$ which maps $M^{\cH}_z$ to $T$, so that
  $1/k(T,T^*) \ge 0$ by Theorem \ref{T:A_H_representation}.
\end{proof}

\section{Boundary representations and hyperrigidity for unitarily invariant spaces}\label{S:AH}
In this section, we investigate the notions of boundary representations, $C^*$-envelope and hyperrigidity
(see Subsection \ref{S:prelimuep}) for the algebras $A(\cH)$.
In particular, we answer Question \ref{Q:intro2} for these algebras whenever $\cH$ is regular.

It is not hard to see that every spherical unitary tuple gives rise to a representation of $A(\cH)$.
We show below via an application of Theorem \ref{T:sphunitary_uep} that such a representation
has the unique extension property provided that the reproducing kernel is unbounded.

\begin{proposition}
  \label{P:UEP_bigspace}
  Let $\cH$ be a unitarily invariant complete Nevanlinna-Pick space on $\bB_d$
  with reproducing kernel 
  \[
  k(z,w) = \sum_{n = 0}^\infty a_n \langle z,w \rangle^n
  \]
  and let $U = (U_1,\ldots,U_d)$ be a spherical unitary on a Hilbert space $\cE$.
  Then, there exists a unique unital completely contractive homomorphism  $\rho: A(\cH) \to B(\cE) $ such that $\rho(M_{z})=U.$  If $\sum_{n=0}^\infty a_n = \infty$, then $\rho$ has the unique extension property.
\end{proposition}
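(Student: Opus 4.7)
The plan is to decompose the argument into three pieces: uniqueness of $\rho$, existence of $\rho$ via Theorem \ref{T:A_H_representation}, and the unique extension property via Theorem \ref{T:sphunitary_uep}. Uniqueness is immediate since the polynomials in $M_{z_1},\ldots,M_{z_d}$ are dense in $A(\cH)$.

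For existence, I would verify that $1/k(U,U^*) \geq 0$ and invoke Theorem \ref{T:A_H_representation}. The key calculation uses that the components of a spherical unitary $U$ are commuting normals with $\sum_{j=1}^d U_j U_j^* = I$; by Fuglede--Putnam, the operators $U_j$ and $U_k^*$ all commute, so the multinomial expansion yields
\[
\sum_{|\alpha|=n} \binom{n}{\alpha} U^\alpha (U^*)^\alpha = \Bigl(\sum_{j=1}^d U_j U_j^*\Bigr)^n = I
\]
for every $n \geq 1$. Combined with the fact that $\sum_n b_n \leq 1$ (a consequence of $1 - 1/k(z,z) < 1$ for $z \in \bB_d$), this gives $\sum_{n=1}^N b_n \sum_{|\alpha|=n} \binom{n}{\alpha} U^\alpha (U^*)^\alpha = \bigl(\sum_{n=1}^N b_n\bigr) I \leq I$ for every $N$, which is the desired positivity.

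For the unique extension property under the hypothesis $\sum_n a_n = \infty$, I plan to apply Theorem \ref{T:sphunitary_uep} to the row contraction $T = (\psi_{k,\alpha}(M_z^\cH))_{\alpha \in \bN^d \setminus \{0\}}$ introduced before Lemma \ref{L:spectrum}. Comparing the linear coefficients in the identity $k \cdot (1 - 1/k) = k - 1$ gives $b_1 = a_1 > 0$, which implies that the unital norm-closed operator algebra generated by $T$ contains each $M_{z_j}$ and therefore coincides with $A(\cH)$. Under $\rho$, the tuple $T$ is sent to $(\psi_{k,\alpha}(U))_\alpha$, whose components are commuting normals, and the same multinomial calculation as above gives $\sum_\alpha \psi_{k,\alpha}(U) \psi_{k,\alpha}(U)^* = \bigl(\sum_n b_n\bigr) I$.

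The main obstacle is showing that $\sum_n b_n = 1$, which is precisely the point at which the hypothesis $\sum_n a_n = \infty$ becomes essential. I would obtain this by letting $\|z\| \to 1$ in the identity $1 - 1/k(z,z) = \sum_n b_n \|z\|^{2n}$: monotone convergence combined with $k(z,z) \to \infty$ forces $\sum_n b_n = 1$. With this in hand, $(\psi_{k,\alpha}(U))_\alpha$ is a spherical unitary, and Theorem \ref{T:sphunitary_uep} immediately delivers the unique extension property for $\rho$.
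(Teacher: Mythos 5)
Your proposal is correct and follows essentially the same route as the paper: uniqueness from density of the polynomials, existence by checking $1/k(U,U^*)\ge 0$ via Fuglede--Putnam and the multinomial identity and then invoking Theorem \ref{T:A_H_representation}, and the unique extension property by applying Theorem \ref{T:sphunitary_uep} to the row contraction $\psi_k(M_z)$, whose image $\psi_k(U)$ is a spherical unitary exactly when $\sum_n b_n = 1$. The only difference is that you spell out two details the paper leaves implicit, namely that $b_1 = a_1 > 0$ forces the algebra generated by $\psi_k(M_z)$ to be all of $A(\cH)$, and that $\sum_n b_n = 1$ is equivalent to $\sum_n a_n = \infty$ by monotone convergence; both justifications are sound.
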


\begin{proof}
  Uniqueness of $\rho$ follows from the fact that $A(\cH)$ is generated
  by $M_{z_1},\ldots,M_{z_d}$ as a Banach algebra. To prove existence,
  recall that the complete Nevanlinna-Pick property implies that there exists
  a sequence $\{b_n\}_n$ of non-negative numbers such that
  \[
  1-\frac{1}{k(z,w)}=\sum_{n=1}^\infty b_n \langle z,w\rangle^n \quad (z,w\in \bB_d).
  \]
  Note that $\sum_{n=1}^\infty b_n \le 1$, and $\sum_{n=1}^\infty b_n = 1$ if and only
  if $\sum_{n=0}^\infty a_n = \infty$.
  Furthermore, recall from the discussion following Lemma \ref{L:1/k}
that for each multi-index $\alpha\in \bN^d\setminus \{0\}$, we defined
\[
\psi_{k,\alpha}(z)=b_{|\alpha|}^{1/2}\binom{|\alpha|}{\alpha}^{1/2}z^\alpha.
\]
By the Putnam-Fuglede theorem, each $\psi_{k,\alpha}(U)$ is normal and
  \begin{align*}
    \sum_{\alpha \in \bN^d \setminus \{0\}} \psi_{k,\alpha}(U)\psi_{k,\alpha}(U)^*
    = \sum_{\alpha \in \bN^d \setminus \{0\}} b_{|\alpha|} \binom{|\alpha|}{\alpha}
    U^\alpha (U^*)^\alpha
    = \sum_{n=1}^\infty b_n I.
  \end{align*}
  Thus,  the tuple $\psi_k(U)=(\psi_{k,\alpha}(U))_\alpha$ is a row contraction and hence $1/k(U,U^*) \ge 0$, so that
  an application of
  Theorem \ref{T:A_H_representation} yields the existence of $\rho$.
  
If $\sum_{n = 1}^\infty b_n = 1$, then $\psi_k(U)$ is a spherical unitary. On the other hand, by Lemma \ref{L:1/k} we know that $\psi_k(M_z)$ is a row contraction. Since $A(\cH)$ coincides with the unital norm closed algebra generated by the operators $\psi_{k,\alpha}(M_z)$ for $\alpha\in \bN^d\setminus \{0\}$,   the representation $\rho$ has
  the unique extension property by Theorem \ref{T:sphunitary_uep}.
 \end{proof}

  We remark that the mere existence of $\rho$ in the preceding result also follows
  from the elementary fact that the norm on $A(\cH)$ dominates the supremum norm over the unit sphere,
  so that the restriction map $A(\cH) \to C(\partial \bB_d)$ is completely contractive.

We are now in the position to identify all the boundary representations for $A(\cH)$ when
$\cH$ is regular.
Observe that Theorem \ref{T:essentiallynormalan} shows in particular that if $\cH$ is a regular unitarily
invariant space, then every $\zeta\in \partial \bB_d$ gives rise to a character
$\rho_\zeta$ on $C^*(A(\cH))$ of evaluation at $\zeta$ which annihilates the compact operators. 

\begin{theorem}\label{T:bdryan}
Let $\cH$ be a regular unitarily invariant complete Nevanlinna-Pick space on $\bB_d$ with reproducing kernel 
\[
k(z,w)=\sum_{n=0}^\infty a_n \langle z,w \rangle^n.
\]
Suppose that $\cH$ is not the Hardy space $H^2(\bD)$.
  \begin{enumerate}[label=\normalfont{(\alph*)}]
    \item 
    If $\sum_{n=0}^\infty a_n = \infty$,
then the boundary representations for $A(\cH)$ are precisely the characters $\rho_\zeta$ for $\zeta \in \partial \bB_d$
      and the identity representation of $C^*(A(\cH))$.
    \item
    If 
      $\sum_{n=0}^\infty a_n < \infty$,
then the identity representation of $C^*(A(\cH))$
      is the only boundary representation  for $A(\cH)$.
  \end{enumerate}
\end{theorem}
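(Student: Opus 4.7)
The plan is to first classify the irreducible $*$-representations of $C^*(A(\cH))$ and then decide which of them restrict to $A(\cH)$ with the unique extension property. By Theorem \ref{T:essentiallynormalan} and Lemma \ref{lem:rep_split}, up to unitary equivalence the only irreducible $*$-representations of $C^*(A(\cH))$ are the identity representation on $\cH$ and the characters $\rho_\zeta$ for $\zeta \in \partial \bB_d$; these are thus the only candidates for boundary representations.

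Next I would analyze the characters $\rho_\zeta$. The scalar tuple $(\zeta_1, \ldots, \zeta_d)$ is a spherical unitary on $\bC$, whose associated representation of $A(\cH)$ is $\phi \mapsto \phi(\zeta)$. In case (a), Proposition \ref{P:UEP_bigspace} directly yields that this representation has the unique extension property, so $\rho_\zeta$ is a boundary representation. In case (b), the reproducing kernel extends continuously to $\ol{\bB_d}$, so $k_\zeta \in \cH$ is a joint eigenvector of the operators $M_{z_j}^*$; the vector state $\omega_\zeta(T) = \langle T k_\zeta, k_\zeta \rangle/\|k_\zeta\|^2$ is then a second unital completely positive extension of $\rho_\zeta|_{A(\cH)}$ to $C^*(A(\cH))$. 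Indeed, $\omega_\zeta$ agrees with $\rho_\zeta$ on $A(\cH)$ but is positive on the rank-one projection onto $k_\zeta$, whereas $\rho_\zeta$ annihilates all compact operators. Hence $\rho_\zeta|_{A(\cH)}$ has two distinct extensions in case (b), so $\rho_\zeta$ is not a boundary representation there.

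Finally, I would show that the identity representation is a boundary representation in both cases by appealing to the Davidson--Kennedy theorem on the existence of sufficiently many boundary representations. In case (b), the preceding step eliminates the $\rho_\zeta$, so the identity representation is the only remaining candidate and must be a boundary representation. In case (a), if the identity representation were not a boundary representation, then $\bigoplus_{\zeta \in \partial \bB_d} \rho_\zeta$ would be completely isometric on $A(\cH)$; equivalently, the quotient map $C^*(A(\cH)) \to C(\partial \bB_d)$ would restrict isometrically on $A(\cH)$, forcing $\|M_p\|_\cH \leq \|p\|_{C(\partial \bB_d)}$ for every polynomial $p$. I would derive a contradiction by testing against $p = z_1^2 + \cdots + z_d^2$: orthogonality of monomials in $\cH$ yields $\|M_p \cdot 1\|_\cH^2 = d / a_2$, while $|p(z)| \leq |z|^2 = 1$ on $\partial \bB_d$ gives $\|p\|_{C(\partial \bB_d)} \leq 1$. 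Using Lemma \ref{L:NP-char} together with $b_1, b_2 \geq 0$ and $\sum_{n \ge 1} b_n \leq 1$, one verifies that the hypothesis $\cH \neq H^2(\bD)$ forces $a_2 = b_1^2 + b_2 < d$, yielding $d/a_2 > 1$ and the desired contradiction.

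The main obstacle is this last coefficient and norm comparison in case (a); the classification of irreducible representations, the direct application of Proposition \ref{P:UEP_bigspace} to the one-dimensional spherical unitary in case (a), and the construction of the alternate character $\omega_\zeta$ in case (b) are essentially immediate.
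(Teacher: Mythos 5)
Your proposal is correct, and for most of the argument it coincides with the paper's proof: the classification of the irreducible $*$-representations of $C^*(A(\cH))$ via Theorem \ref{T:essentiallynormalan} and Lemma \ref{lem:rep_split}, the application of Proposition \ref{P:UEP_bigspace} to the one-dimensional spherical unitary $\zeta$ in case (a), and the two-extension argument in case (b) using $\rho_\zeta$ versus the vector state at $k_\zeta$ (which distinguishes them on the compacts) are exactly the paper's steps. Where you genuinely diverge is in showing that the identity representation is a boundary representation. The paper does this uniformly in both cases by Arveson's boundary theorem \cite{arveson1972}: since $C^*(A(\cH))$ contains $\cK(\cH)$ and acts irreducibly, it suffices that the quotient map $q$ onto $C(\partial\bB_d)$ fail to be isometric on $A(\cH)$, which is verified using the multiplier norm identity $\|M_{z_1^n}\|^2 = 1/a_n$ from \cite[Proposition 6.4]{hartz2015isom} together with the Drury--Arveson computation $\|M_{z_1 z_2}\|^2 = 1/2 > \|z_1 z_2\|_\infty^2$. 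You instead use the Davidson--Kennedy existence theorem: in case (b) the identity is the only surviving candidate, so it must be a boundary representation; in case (a) you argue that otherwise $q$ would be isometric on $A(\cH)$, and you contradict this with the elementary bound $\|M_p\| \ge \|M_p 1\|_{\cH} = \sqrt{d/a_2}$ for $p = z_1^2 + \cdots + z_d^2$, combined with the Nevanlinna--Pick coefficient constraints $a_2 = b_1^2 + b_2$, $b_1, b_2 \ge 0$, $\sum_{n\ge 1} b_n \le 1$, $b_1 = a_1 > 0$, which give $a_2 \le 1$ with equality (when $d=1$) only for $b_1 = 1$, i.e.\ only for $H^2(\bD)$, so that $a_2 < d$ in all admissible cases. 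This trade is legitimate: you avoid Arveson's boundary theorem and the nontrivial equality $\|M_{z_1^n}\| = \|z_1^n\|_{\cH}$ (only the trivial inequality $\|M_p\| \ge \|p\|_{\cH}$ is needed), at the price of invoking the deeper Davidson--Kennedy theorem and, in case (b), losing the additional information that $q$ is non-isometric on $A(\cH)$ there as well. In a final write-up you should spell out the coefficient verification you only sketch (for $d \ge 2$ one has $a_2 \le 1 < d$ outright; for $d = 1$, $b_1 = 1$ forces $b_n = 0$ for $n \ge 2$ and hence $\cH = H^2(\bD)$, while $b_1 = 0$ is excluded by $a_1 > 0$), and note explicitly that a direct sum containing all the $\rho_\zeta$ computes exactly the sup norm of $q(\cdot)$ on $\partial\bB_d$, so complete isometry of that direct sum on $A(\cH)$ is the same as $q$ being completely isometric on $A(\cH)$.
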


\begin{proof}
  By Theorem \ref{T:essentiallynormalan}, we have that $\cK(\cH)\subset C^*(A(\cH))$ and $C^*(A(\cH)) / \cK(\cH)$ is $*$-isomorphic to $C( \partial \bB_d)$.
  Lemma \ref{lem:rep_split} shows that the only possible irreducible $*$-representations of $C^*(A(\cH))$
  are therefore the identity representation and the point evaluation characters $\rho_\zeta$ for $\zeta \in \partial \bB_d$.

We first show that the identity representation is always a boundary representation for $A(\cH)$.
 By Arveson's boundary theorem \cite[Theorem 2.1.1]{arveson1972}, it suffices to show that
the quotient map
\[
q:C^*(A(\cH))\to C^*(A(\cH))/\cK(\cH)\cong C(\partial \bB^d)
\]
is not completely isometric on $A(\cH)$. 
We will show that it is not even isometric. First note that $q(M_f)=f$ whence $\|q(M_f)\|=\|f\|_\infty$ for every polynomial $f$.
 Next, it
follows from \cite[Proposition 6.4]{hartz2015isom} that \begin{equation*}
  ||M_{z_1^n}||^2 = ||z_1^n||^2_{\cH} = 1/a_n,
\end{equation*}
whereas $||z_1^n||_{\infty} = 1$.
Thus, $q$ is not isometric unless $a_n = 1$ for all $n\geq 0$.
But this would imply $\cH$ is the Drury-Arveson space $H^2_d$,
and it is well known then that $q$ is not isometric if $d \ge 2$ (see \cite[Section 3.8]{arveson1998}). Indeed,
\begin{equation*}
  ||M^{H^2_d}_{z_1 z_2}||^2 = ||z_1 z_2||^2_{H^2_d} = 1/2,
\end{equation*}
whereas $||z_1 z_2||^2_\infty = 1/4$.
Thus, in order for $q$ to be isometric we must have $d=1$ and $\cH =H^2_1= H^2(\bD)$ contrary to our assumption. We conclude that the identity representation of $C^*(A(\cH))$ is a boundary representation for $A(\cH)$.

If $\sum_{n=0}^\infty a_n = \infty$, then Proposition \ref{P:UEP_bigspace} shows that 
the character $\rho_\zeta$ of $C^*(A(\cH))$ is a boundary representation for $A(\cH)$ for every $\zeta\in \partial \bB_d$. 

Finally, assume that $\sum_{n=0}^\infty a_n < \infty$. We must show that no point evaluation character $\rho_\zeta$ of $C^*(A(\cH))$ is a boundary representation for $A(\cH)$.
  In this case, $\cH$ can be identified with a reproducing kernel Hilbert space on $\ol{\bB_d}$
  in a natural way (see Subsection \ref{SS:an}). Let $\zeta \in \partial \bB_d$. Then
  the restriction  $\rho_{\zeta} |_{A(\cH)}$ is a unital completely contractive functional which admits two extensions to a state on $C^*(A(\cH))$, namely
  $\rho_{\zeta}$ itself and the state $\varphi$ defined by
  \begin{equation*}
    \varphi(a) =
    \frac{\langle a k_{\zeta}, k_{\zeta} \rangle}{\|k_{\zeta}\|^2}, \quad a\in C^*(A(\cH))
  \end{equation*}
  where $k_\zeta=k(\cdot,\zeta)\in \cH$.
  Moreover, these two states are distinct as $\rho_{\zeta}$ annihilates the compacts whereas
  $\varphi$ clearly does not. We conclude that $\rho_{\zeta}$ is not a boundary representation for $A(\cH)$. 
\end{proof}

We remark that the question of whether the identity representation is a boundary representation
in the context of unitarily invariant spaces
was already considered by Guo, Hu and Xu \cite{GHX04}, and a proof of that fact in the setting
of Theorem \ref{T:bdryan} could also be based on \cite[Proposition 4.9]{GHX04}.

Our methods also allow us to determine when the algebras $A(\cH)$ are hyperrigid.
Recall that a unital operator algebra $\cA$ is said to be hyperrigid inside of $C^*(\cA)$
if the restriction of every unital $*$-representation of $C^*(\cA)$ to $\cA$
has the unique extension property.

\begin{theorem}
  \label{T:hyperrigid}
  Let $\cH$ be a regular unitarily invariant complete Nevanlinna-Pick space on $\bB_d$ with reproducing kernel
  \begin{equation*}
    k(z,w) = \sum_{n=0}^\infty a_n \langle z,w \rangle^n.
  \end{equation*}
  Suppose that $\cH$ is not the Hardy space $H^2(\bD)$.
  \begin{enumerate}[label=\normalfont{(\alph*)}]
    \item If $\sum_{n=0}^\infty a_n = \infty$, then $A(\cH)$ is hyperrigid inside of $C^*(A(\cH))$.
    \item If $\sum_{n=0}^\infty a_n < \infty$, then $A(\cH)$ is not hyperrigid inside of $C^*(A(\cH))$.
  \end{enumerate}
\end{theorem}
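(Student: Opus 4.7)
The plan is to handle the two cases separately. Part (b) follows essentially immediately from Theorem~\ref{T:bdryan}(b): under the assumption $\sum_{n=0}^\infty a_n < \infty$, each character $\rho_\zeta$ for $\zeta \in \partial \bB_d$ is a unital $*$-representation of $C^*(A(\cH))$ which is not a boundary representation for $A(\cH)$, and hence $\rho_\zeta|_{A(\cH)}$ fails the unique extension property. Thus $A(\cH)$ is not hyperrigid.

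For part (a), I would show that for every unital $*$-representation $\pi$ of $C^*(A(\cH))$ on a Hilbert space $\cL$, the restriction $\pi|_{A(\cH)}$ has the unique extension property. By Lemma~\ref{lem:rep_split}, write $\pi = \pi_1 \oplus \pi_2$ on $\cL = \cL_1 \oplus \cL_2$, where $\pi_1$ is a multiple of the identity representation and $\pi_2$ annihilates $\cK(\cH)$. The strategy is to verify UEP for each summand and then combine. For $\pi_1$: since the identity representation is a boundary representation by Theorem~\ref{T:bdryan}(a), and UEP is inherited by arbitrary amplifications via a standard block-matrix Schwarz argument, the restriction $\pi_1|_{A(\cH)}$ has UEP. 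For $\pi_2$: a short computation on reproducing kernels using $\langle M_z^\alpha M_z^{*\alpha} k_w, k_v\rangle = v^\alpha \ol{w}^\alpha k(v,w)$ and $k \cdot 1/k \equiv 1$ identifies $1/k(M_z,M_z^*)$ as the rank-one projection onto the constant function $1 \in \cH$, hence a compact operator. Since $\pi_2$ factors through $C(\partial \bB_d)$ by Theorem~\ref{T:essentiallynormalan}, the image tuple $\pi_2(\psi_k(M_z))$ consists of mutually commuting normals; moreover, evaluating $\sum_\alpha |\psi_{k,\alpha}(z)|^2 = \sum_{n=1}^\infty b_n |z|^{2n}$ on the sphere gives $\sum_{n=1}^\infty b_n = 1$ (using $\sum_{n=0}^\infty a_n = \infty$), so $\sum_\alpha \pi_2(\psi_{k,\alpha}(M_z))\pi_2(\psi_{k,\alpha}(M_z))^* = I$ and this tuple is a spherical unitary. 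Since comparing coefficients in $k\cdot 1/k = 1$ forces $b_1 = a_1 > 0$, the tuple $\psi_k(M_z)$ generates $A(\cH)$ as a unital norm-closed algebra, and Theorem~\ref{T:sphunitary_uep} then yields UEP for $\pi_2|_{A(\cH)}$.

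Finally, let $\phi: C^*(A(\cH)) \to B(\cL)$ be any UCP extension of $\pi|_{A(\cH)}$ and block-decompose $\phi = (\phi_{ij})$ with respect to $\cL_1 \oplus \cL_2$. The diagonal compressions $\phi_{ii}$ are UCP extensions of $\pi_i|_{A(\cH)}$ and hence equal $\pi_i$ by the UEP established above. For $a \in A(\cH)$, the Schwarz inequality gives $\phi(a^*a) - \phi(a)^*\phi(a) \ge 0$, and its diagonal blocks vanish since $\pi_i$ is already a $*$-homomorphism on $A(\cH)$; a positive block matrix with vanishing diagonal blocks must vanish identically, forcing equality in the Schwarz inequality and placing $A(\cH)$ in the multiplicative domain of $\phi$. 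Since this domain is a $C^*$-subalgebra and $A(\cH)$ generates $C^*(A(\cH))$, the map $\phi$ is itself a $*$-homomorphism agreeing with $\pi$ on $A(\cH)$, so $\phi = \pi$. The main obstacle will be the identification of $1/k(M_z,M_z^*)$ as the projection onto constants, since this crucially exploits the complete Nevanlinna-Pick structure through the non-negativity of the coefficients $b_n$; once that compactness is in hand, the spherical-unitary mechanism activates UEP for $\pi_2$ via Theorem~\ref{T:sphunitary_uep} and the remainder of the argument proceeds by standard operator-algebraic machinery.
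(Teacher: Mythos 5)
Your argument is correct and follows the paper's proof in all essentials: part (b) is deduced from Theorem \ref{T:bdryan}(b) exactly as in the paper, and part (a) proceeds by the same splitting $\pi=\pi_1\oplus\pi_2$ via Lemma \ref{lem:rep_split} and Theorem \ref{T:essentiallynormalan}, with Theorem \ref{T:bdryan}(a) handling the identity summand and the spherical-unitary mechanism of Theorem \ref{T:sphunitary_uep} (which is precisely the content of Proposition \ref{P:UEP_bigspace}, using $\sum_{n=1}^\infty b_n=1$ when $\sum_{n=0}^\infty a_n=\infty$) handling the summand annihilating the compacts. The only cosmetic differences are that you re-derive by hand, via the block-Schwarz/multiplicative-domain argument, the fact that direct sums and amplifications of representations with the unique extension property again have it (the paper simply cites Arveson), and that your identification of $1/k(M_z,M_z^*)$ as the projection onto the constants, while true, is not actually load-bearing: what your $\pi_2$ argument really uses is the exact sequence of Theorem \ref{T:essentiallynormalan} together with $\sum_{n=1}^\infty b_n=1$, not compactness of that operator.
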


\begin{proof}
(a)  By Theorem \ref{T:essentiallynormalan} and
  Lemma \ref{lem:rep_split}, every unital $*$-representation $\pi$ of $C^*(A(\cH))$ decomposes
  as a direct sum $\pi_1 \oplus \pi_2$, where $\pi_1$ is unitarily equivalent
  to a multiple of the identity representation and $\pi_2$ annihilates the ideal of compact operators.
  Since direct sums of representations with the unique extension property have the unique
  extension property themselves \cite[Proposition 4.4]{Arveson11}, it suffices
  to show that the restriction to $A(\cH)$ of the identity representation and of every representation
  which annihilates the compacts has the unique extension property.
  The assertion about the identity representation is contained in part (a) of Theorem \ref{T:bdryan}. Finally,
  if $\pi$ is a representation of $C^*(A(\cH))$ which annihilates
  the compacts, then an application of Theorem \ref{T:essentiallynormalan} shows
  that $(\pi(M_{z_1}), \ldots, \pi(M_{z_d}))$ is a spherical unitary, so that $\pi|_{A(\cH)}$ has the unique
  extension property by
  Proposition \ref{P:UEP_bigspace}.
  
  (b) By part (b) of Theorem \ref{T:bdryan}, the restrictions $\rho_\zeta \big|_{A(\cH)}$ of the point evaluation characters of $C^*(A(\cH))$
  do not have the unique extension property.
\end{proof}

Arveson conjectured that a separable unital operator algebra $\cA$ is hyperrigid
if and only if every irreducible representation of $C^*(\cA)$ is a boundary representation
for $\cA$ \cite[Conjecture 4.3]{Arveson11}. Combining Theorem \ref{T:bdryan} and Theorem
\ref{T:hyperrigid}, we see that for regular unitarily invariant complete Nevanlinna-Pick spaces on the ball, the algebras $A(\cH)$
support Arveson's conjecture.

Using our knowledge about the boundary representations of $A(\cH)$,
we can also easily identify the $C^*$-envelope.

\begin{corollary}\label{C:envelope}
Let $\cH$ be a regular unitarily invariant complete Nevanlinna-Pick space on the unit ball. If $\cH$ is the Hardy space $H^2(\bD)$ then $C^*_e(A(\cH))=C(\bT)$, while if it is not, then $C^*_e(A(\cH))=C^*(A(\cH))$.
\end{corollary}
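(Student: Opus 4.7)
The plan is to derive this corollary as an essentially immediate consequence of Theorem \ref{T:bdryan}, combined with the standard fact recalled in Subsection \ref{S:prelimuep}: if the identity representation of $C^*(\cA)$ on $B(\cH)$ is a boundary representation for $\cA$, then $C^*_e(\cA) = C^*(\cA)$. I would split the argument into the two cases of the statement and handle them separately.

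First, suppose that $\cH$ is not the Hardy space $H^2(\bD)$. Then Theorem \ref{T:bdryan} applies, and in both its cases (a) and (b) the identity representation of $C^*(A(\cH)) \subset B(\cH)$ appears among the boundary representations for $A(\cH)$. By the principle recalled above, this immediately yields $C^*_e(A(\cH)) = C^*(A(\cH))$, with the inclusion $A(\cH) \hookrightarrow C^*(A(\cH))$ serving as the canonical embedding. No further work is required in this direction.

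Second, for the Hardy space case we have $A(\cH) = A(\bD)$, the classical disc algebra, as noted in Subsection \ref{ss:regular}. Here I would invoke the well-known fact (noted immediately after the statement of Theorem \ref{T:main4} in the introduction) that the boundary representations of $A(\bD)$ are precisely the characters of evaluation at points of $\bT$, whose direct sum factors through the completely isometric restriction map $A(\bD) \to C(\bT)$ provided by the maximum modulus principle. Since $C(\bT)$ is generated as a $C^*$-algebra by these characters, it follows again from the general principle in Subsection \ref{S:prelimuep} that $C^*_e(A(\bD)) = C(\bT)$. In particular, $C^*_e(A(\bD))$ is strictly smaller than $C^*(A(\bD)) \subset B(H^2(\bD))$, since the latter is the Toeplitz algebra and contains the compact operators.

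There is no real obstacle here, as the deep content has already been established in Theorem \ref{T:bdryan}; the only mild subtlety is to remember that the Hardy space case is genuinely different and must be treated via the classical Choquet-boundary description of $A(\bD)$ rather than by invoking Theorem \ref{T:bdryan}.
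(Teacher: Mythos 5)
Your proposal is correct and takes essentially the same route as the paper: for $\cH \neq H^2(\bD)$ it deduces $C^*_e(A(\cH)) = C^*(A(\cH))$ from the identity representation being a boundary representation (Theorem \ref{T:bdryan}) together with the principle recalled in Subsection \ref{S:prelimuep}, and for the Hardy space it invokes the classical description of the disc algebra (the paper phrases this via the Shilov boundary being $\bT$, you via the point-evaluation boundary representations, which is the same classical fact).
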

\begin{proof}
The case where $\cH=H^2(\bD)$ is classical: $A(H^2(\bD))$ is the disc algebra, whose Shilov boundary is the unit circle $\bT$.  The alternative is a straightforward consequence of the fact that the identity representation is a boundary representation by Theorem \ref{T:bdryan} (see the discussion in Subsection \ref{S:prelimuep}).
\end{proof}

We finish this section by observing that the $C^*$-algebras $C^*(A(\cH))$ are in fact isomorphic to one another for every regular unitarily invariant 
space on the ball. For some classical spaces, this fact
is mentioned in \cite[Section 5]{arveson1998}. We provide a proof
below for the reader's convenience.

\begin{proposition}\label{P:toeplitzisom}
  Let $\cH$ be a regular unitarily invariant space on $\bB_d$.
  Then, $C^*(A(\cH))$ and $C^*(A(H^2_d))$ are unitarily equivalent.
  More precisely, there exists
  a unitary $U: H^2_d \to \cH$ such that the operators
  \begin{equation*}
    U^* M_{z_j}^{\cH} U - M_{z_j}^{H^2_d} \quad (j=1,\ldots,d)
  \end{equation*}
  are compact, and $U^* C^*(A(\cH)) U=C^*(A(H^2_d)).$
\end{proposition}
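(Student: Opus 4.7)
The plan is to construct the unitary $U$ explicitly using the orthogonal monomial bases, then show that the resulting perturbation $U^* M_{z_j}^{\cH} U - M_{z_j}^{H^2_d}$ is a weighted-shift-type operator whose weights vanish at infinity thanks to the regularity hypothesis.

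First, I would define $U: H^2_d \to \cH$ by mapping the orthonormal basis $\{z^\alpha/\|z^\alpha\|_{H^2_d}\}_{\alpha}$ of $H^2_d$ to the orthonormal basis $\{z^\alpha/\|z^\alpha\|_{\cH}\}_{\alpha}$ of $\cH$. This is well-defined and unitary by Subsection \ref{SS:uispaces}. A direct computation using the formula $\|z^\alpha\|^2_{\cH} = \frac{1}{a_{|\alpha|}} \frac{\alpha!}{|\alpha|!}$ shows that, on the $H^2_d$-orthonormal basis, the difference operator $D_j := U^* M_{z_j}^{\cH} U - M_{z_j}^{H^2_d}$ satisfies
\[
D_j \bigl( z^\alpha/\|z^\alpha\|_{H^2_d}\bigr) = \sqrt{\tfrac{\alpha_j+1}{|\alpha|+1}} \Bigl( \sqrt{\tfrac{a_{|\alpha|}}{a_{|\alpha|+1}}} - 1 \Bigr) \cdot \bigl(z^{\alpha+e_j}/\|z^{\alpha+e_j}\|_{H^2_d}\bigr).
\]

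Next, I would show $D_j$ is compact. Since $\sqrt{(\alpha_j+1)/(|\alpha|+1)} \le 1$ and the regularity assumption $\lim_n a_n/a_{n+1} = 1$ forces the second factor to tend to $0$ as $|\alpha| \to \infty$, the coefficients $c_\alpha^{(j)}$ above satisfy $\sup_{|\alpha|\ge N}|c_\alpha^{(j)}| \to 0$. Truncating $D_j$ to the finite-rank operator supported on $\{|\alpha| < N\}$ yields norm approximations, so $D_j \in \cK(H^2_d)$.

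Finally, I would deduce the $C^*$-algebra equality. By Theorem \ref{T:essentiallynormalan}, both $C^*(A(\cH))$ and $C^*(A(H^2_d))$ contain the respective compacts, and since $U$ is unitary, $U^* \cK(\cH) U = \cK(H^2_d)$. Hence $U^* C^*(A(\cH)) U \supseteq \cK(H^2_d)$, and it also contains each generator $U^* M_{z_j}^{\cH} U = M_{z_j}^{H^2_d} + D_j$; subtracting off the compact $D_j$ shows it contains all the generators of $C^*(A(H^2_d))$, giving the inclusion $\supseteq$. The reverse inclusion follows symmetrically: each $M_{z_j}^{H^2_d} + D_j = U^* M_{z_j}^{\cH} U$ lies in $U^* C^*(A(\cH)) U$, together with all compacts, and thus so does $C^*(A(H^2_d))$.

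The only substantive step is the compactness of $D_j$, which is where regularity is genuinely used; the rest is a bookkeeping argument leveraging Theorem \ref{T:essentiallynormalan} to pass between the compacts and the full algebra. I do not foresee any real obstacle beyond verifying the ratio formula for $\|z^\alpha\|_{\cH}^2$ cleanly.
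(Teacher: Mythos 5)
Your proposal is correct and follows essentially the same route as the paper: the same unitary (the paper describes it as $Up=\sqrt{a_n}\,p$ on homogeneous polynomials of degree $n$, which agrees with your basis map), the same computation showing $U^*M_{z_j}^{\cH}U-M_{z_j}^{H^2_d}$ acts on each homogeneous degree with a factor $\sqrt{a_n/a_{n+1}}-1\to 0$, hence is compact, and the same use of Theorem \ref{T:essentiallynormalan} to absorb the compacts into both $C^*$-algebras and conclude $U^*C^*(A(\cH))U=C^*(A(H^2_d))$. Your monomial-by-monomial weight formula is just a more explicit version of the paper's degree-wise estimate, and apart from a slightly garbled phrasing in the last sentence (the reverse inclusion should read $U^*M_{z_j}^{\cH}U=M_{z_j}^{H^2_d}+D_j\in C^*(A(H^2_d))$), the argument is complete.
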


\begin{proof}
There exists a unique unitary operator 
  \[
  U: H^2_d \to \cH
  \]
  which satisfies
  \begin{equation*}
    U p = \sqrt{a_n} p
  \end{equation*}
  for every homogeneous polynomial $p$ of degree $n$ and every $n\geq 0$. A simple computation shows that
  if $p$ is a homogeneous polynomial of degree $n$, then
  \begin{equation*}
    (U^* M_{z_j}^{\cH} U - M_{z_j}^{H^2_d}) p = 
    (\sqrt{a_n / a_{n+1}} -1) z_j p
  \end{equation*}
  for $j=1,\ldots,d$.
By regularity of $\cH$, we know that
  \[
  \lim_{n\to \infty}a_n/a_{n+1}=1
  \]
  whence we infer that $U^* M_{z_j}^{\cH} U - M_{z_j}^{H^2_d}$ can be approximated in norm by finite rank operators, and thus is compact for every $j=1,\ldots,d$.
  Hence, the map $T \mapsto U^* T U$
  sends $A(\cH)$ into $A(H^2_d)+\cK(H^2_d)\subset C^*(A(H^2_d))$. Clearly, this implies that it sends $C^*(A(\cH))$ into $C^*(A(H^2_d))$. By a similar argument,
  the map $X \mapsto U X U^*$ sends $C^*(A(H^2_d))$ into $C^*(A(\cH))$, and the proof is complete.
  \end{proof}

By Corollary \ref{C:envelope}, we conclude that there are only two possible isomorphism
classes of $C^*$-envelopes for the algebras $A(\cH)$ in this setting.

\section{Homogeneous ideals and Arveson's conjecture}
\label{S:ess_norm}

In Section \ref{S:ideals}, we will extend the results of Sections \ref{S:coextue} and
\ref{S:AH} to quotients of the algebras $A(\cH)$ by homogeneous ideals.
For the moment, we collect here the necessary preliminaries.

Let $\cI \subset \bC[z_1,\ldots,z_d]$
be a proper homogeneous ideal and let $\cH$ be a unitarily invariant space on $\bB_d$
such that the polynomials are multipliers of $\cH$.
We define $\cH_\cI = \cH \ominus \cI$, which is coinvariant for $A(\cH)$ (and indeed for $\Mult(\cH)$),
and we put
\[
S^{\cI}_j=P_{\cH_\cI}M_{z_j}|_{\cH_\cI} \quad (1\leq j \leq d).
\]
Then $S^\cI=(S_1^\cI,\ldots,S_d^\cI)$ is a commuting tuple of operators on $\cH_\cI$ and
it is immediate that $p(S^{\cI})=0$ for every $p\in \cI$.
When the ideal $\cI$ is apparent from context, we may suppress it from the notation and write simply $S$ instead of $S^\cI$.
We now define $A(\cH_\cI)$ as the norm closed algebra generated by $S_1^\cI,\ldots,S_d^\cI$.
The previously studied setting of unitarily invariant spaces on $\bB_d$ corresponds to $\cI=\{0\}$.
We will see in Corollary \ref{C:quotient} that $A(\cH_\cI)$ may alternatively be described
as the quotient of $A(\cH)$ by the norm closure of $\cI$. Part of our motivation for studying these algebras is that they play a role in the recent study of operator theory
on varieties \cite{DRS2011,DRS2015,hartz2015isom} and, as we will see, they are connected to Arveson's essential normality conjecture. 

Let
\begin{equation*}
  V(\cI) = \{ \lambda \in \bC^d: p(\lambda) = 0 \text{ for all } p \in \cI \}
\end{equation*}
denote the vanishing locus of $\cI$.
If the ideal $\cI$ is radical, then the space $\cH_\cI$ can be regarded as a reproducing kernel
Hilbert space on $V(\cI) \cap \bB_d$.
Indeed,
using Hilbert's Nullstellensatz, one can show that the restriction map induces
a unitary operator between $\cH_\cI$ and $\cH \big|_{V(\cI) \cap \bB_d}$ (see
\cite[Lemma 7.4]{hartz2015isom}). Correspondingly, the algebra $A(\cH_\cI)$ is identified
with the norm closure of the polynomials in $\Mult(\cH \big|_{V(\cI) \cap \bB_d})$. In particular,
$A(\cH_\cI)$ is an algebra of multipliers, so it fits within the scope of the paper. On the
other hand, if $\cI$ is not a radical ideal, then $A(\cH_\cI)$ contains non-zero nilpotent elements,
so it is not semi-simple and in particular not an algebra of functions. Nevertheless,
we feel that it is worthwhile to include this more general case, since it does not present
any additional difficulties and has been studied as well (see, for example, \cite{arveson2007,DRS2011}).

While the early results in Section \ref{S:coextue} generalize in a straightforward way, Theorem \ref{T:1/k_dilation}
and most of the results in Section \ref{S:AH} depend on Theorem \ref{T:essentiallynormalan}. Generalizing
this theorem is very difficult. Indeed, one of its assertions
is that the tuple $M_z$ on $\cH$ is \emph{essentially normal}: each operator $M_{z_j}$ is normal modulo the ideal of compact operators $\cK(\cH)$. In the case of the Drury-Arveson space,
the question of whether $S^{\cI}$ is essentially normal is Arveson's famous essential
normality conjecture \cite{arveson2002,arveson2005}. Even though this conjecture
has witnessed exciting progress recently (see for example \cite{EE2015} and \cite{DTY14}),
it remains open in general. We do not address this conjecture directly and instead
make the following definition.
We say that a homogeneous ideal $\cI \subset \bC[z_1,\ldots,z_d]$
\emph{satisfies Arveson's conjecture} if the commuting tuple given by
\[
P_{H^2_d\ominus \cI}M_{z_j}\Big|_{H^2_d\ominus \cI} \quad (1\leq j \leq d)
\]
is essentially normal.

We will require the following basic property of $C^*(A(\cH))$, whose proof is an adaptation of the proof of
\cite[Proposition 2.5]{arveson2007}.

\begin{lemma}\label{L:irredideal}
  Let $\cH$ be a regular unitarily invariant space on $\bB_d$.
  If $\cI \subset \bC[z_1,\ldots,z_d]$
  is a proper homogeneous ideal, then the algebra $A(\cH_\cI)$ is irreducible, and indeed,
  $C^*(A(\cH_\cI))$ contains the ideal of compact operators on $\cH_{\cI}$.
\end{lemma}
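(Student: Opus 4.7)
The plan is to combine the essential normality of $M_z$ on $\cH$ from Theorem \ref{T:essentiallynormalan} with a cyclicity argument centred on the constant function $e_0 := 1 \in \cH$. Since $\cI$ is proper and homogeneous, $e_0$ belongs to $\cH_\cI$. Moreover, decomposing $\cH = \bigoplus_{n \ge 0} \cH^{(n)}$ into the orthogonal spaces of homogeneous polynomials of degree $n$, the homogeneity of $\cI$ gives $\cH_\cI = \bigoplus_{n \ge 0} (\cH^{(n)} \ominus \cI_n)$, where $\cI_n = \cI \cap \cH^{(n)}$. In particular, since $p(S^{\cI}) e_0 = P_{\cH_\cI}\, p$ for every polynomial $p$, the vector $e_0$ is cyclic for $A(\cH_\cI)$.

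The first step is to produce a nonzero compact operator in $C^*(A(\cH_\cI))$. Because $\cH_\cI$ is coinvariant for $M_z$, we have $(S_j^{\cI})^* = M_{z_j}^* \big|_{\cH_\cI}$, so
\[
I_{\cH_\cI} - \sum_{j=1}^d S_j^{\cI} (S_j^{\cI})^* = P_{\cH_\cI} \Bigl( I_\cH - \sum_{j=1}^d M_{z_j} M_{z_j}^* \Bigr) \Big|_{\cH_\cI}.
\]
By Theorem \ref{T:essentiallynormalan}, the right-hand side is compact. Since the kernel of $\cH$ is normalized at $0$, one has $e_0 = k(\cdot,0)$ and consequently $M_{z_j}^* e_0 = \overline{z_j(0)}\, e_0 = 0$ for each $j$, which gives $(I_{\cH_\cI} - \sum S_j^{\cI} (S_j^{\cI})^*) e_0 = e_0 \neq 0$. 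So this compact operator is nonzero.

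The next step is to show $C^*(A(\cH_\cI))$ is irreducible. Using the orthogonal monomial basis of $\cH$ and the standard formula $M_{z_j}^* z^\alpha = c_{\alpha,j}\, z^{\alpha - e_j}$ (with $c_{\alpha,j} > 0$ when $\alpha_j \geq 1$, and zero otherwise), a direct calculation shows that the joint kernel $\bigcap_{j=1}^d \ker M_{z_j}^*$ inside $\cH$ equals $\bC e_0$; restricting to $\cH_\cI$, this remains the $1$-eigenspace of $I_{\cH_\cI} - \sum_j S_j^{\cI} (S_j^{\cI})^*$. Any orthogonal projection $P$ in the commutant of $C^*(A(\cH_\cI))$ commutes with this compact operator, so $P e_0 \in \bC e_0$; being the image of a projection, $P e_0 \in \{0, e_0\}$. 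Cyclicity of $e_0$ then forces $P \in \{0, I_{\cH_\cI}\}$, so $C^*(A(\cH_\cI))$ is irreducible; a fortiori so is $A(\cH_\cI)$.

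To conclude, I would invoke the classical fact that a unital $C^*$-subalgebra of $B(\cH_\cI)$ which acts irreducibly and contains a nonzero compact operator must contain the entire ideal of compact operators on $\cH_\cI$. The main obstacle, such as it is, lies in verifying cleanly that the compression identity above really lands inside $C^*(A(\cH_\cI))$ and that the eigenspace computation carried out in $\cH$ restricts properly to $\cH_\cI$; both hinge only on the coinvariance of $\cH_\cI$ and the presence of $e_0$ therein. The overall argument is a faithful adaptation of Arveson's proof of Proposition 2.5 in the paper referenced in the statement of the lemma.
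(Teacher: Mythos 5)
Your proposal is correct and takes essentially the same route as the paper (both are adaptations of Arveson's Proposition 2.5): the subspace $\bC e_0$ you isolate as the $1$-eigenspace of $I_{\cH_\cI}-\sum_{j=1}^d S_jS_j^*$ (equivalently, the joint kernel of the $S_j^*$) is exactly the orthogonal complement of $\vee_{j=1}^d S_j\cH_\cI$ used in the paper's argument, and both proofs then conclude via cyclicity of the constant function and the compactness of the compressed defect operator together with the standard fact about irreducible $C^*$-algebras containing a nonzero compact operator. No gaps.
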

\begin{proof}
Let $P\in B(\cH_\cI)$ be a projection commuting with $S_1,\ldots,S_d$. In particular, if we denote by $Q$ the projection onto the closed subspace $\vee_{j=1}^d S_j \cH_\cI$
then $QP=PQ$. Hence $(I-Q)P=P(I-Q)$. Since the polynomials are dense in $\cH$, we see that $(I-Q)\cH_\cI$ is the one-dimensional subspace spanned by the constant function $e=1$, which belongs to $\cH_\cI$ since $\cI$ is homogeneous and proper. Thus, $P$ commutes with the rank one projection $e\otimes e$, so that either $Pe=e$ or $Pe=0$. Given a polynomial $f$, we have
\[
PP_{\cH_\cI}f=Pf(S)e=f(S)Pe.
\]
Using once again that the polynomials are dense in $\cH$, we see that either $P=I$ or $P=0$. Thus, $A(\cH_\cI)$ is irreducible.

Finally, the operator $I - \sum_{j=1}^d M_{z_j} M_{z_j}^*$
is compact according to Theorem \ref{T:essentiallynormalan}. From coinvariance of $\cH_{\cI}$,
we deduce that
\[
I_{\cH_\cI}-\sum_{j=1}^d S_j S_j^* =P_{\cH_\cI}\left( I-\sum_{j=1}^d M_{z_j}M_{z_j}^*\right)P_{\cH_\cI}
\]
is compact as well. Moreover, this operator is not zero, as
\[
\left(I_{\cH_\cI}-\sum_{j=1}^d S_j S_j^*\right) e=e.
\]
Thus, $C^*(A(\cH_\cI))$ contains a non-zero compact operator, and hence all compact operators
on $\cH_\cI$ since this $C^*$-algebra is irreducible.
\end{proof}

We observe that by the Putnam-Fuglede theorem, $\cI$ satisfies Arveson's conjecture if
and only if
$C^*(A(H^2_d \ominus \cI))) / \cK(H^2_d\ominus \cI)$ is a commutative $C^*$-algebra.
Although this property appears at first glance to be specific to the Drury-Arveson space,
it is in fact possible to replace $H^2_d$ with any regular unitarily invariant space.
This was shown by Wernet in his PhD thesis
\cite[Corollary 2.49]{Wernet14}.
More generally, the following extension of Proposition \ref{P:toeplitzisom} holds.

\begin{proposition}
  \label{P:toeplitzisomideal}
  Let $\cH$ be a regular unitarily invariant space on $\bB_d$ and
  let $\cI \subset \bC[z_1,\ldots,z_d]$ be a proper homogeneous ideal.
  Then, the $C^*$-algebras $C^*(A(\cH_{\cI}))$ and $C^*(A(H^2_d \ominus \cI))$ are unitarily equivalent.
  In fact, there exists
  a unitary $U: H^2_d \ominus \cI \to \cH_{\cI}$ such that the operators
  \begin{equation*}
    U^* P_{\cH_{\cI}} M_{z_j}^{\cH} \big|_{\cH_{\cI}} U - P_{H^2_d \ominus \cI} M_{z_j}^{H^2_d}\big|_{H^2_d \ominus \cI} \quad (j=1,\ldots, d)
  \end{equation*}
  are compact, and 
  \[
  U^* C^*(A(\cH_\cI)) U=C^*(A(H^2_d\ominus \cI)).
  \]
  In particular,  $P_{\cH_{\cI}} M_z^{\cH} \big|_{\cH_{\cI}}$
  is essentially normal if and only if $P_{H^2_d \ominus \cI} M_z^{H^2_d} \big|_{H^2_d \ominus \cI}$
  is essentially normal.
\end{proposition}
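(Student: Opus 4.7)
The plan is to leverage Proposition \ref{P:toeplitzisom} and combine it with Lemma \ref{L:irredideal}, using the fact that homogeneous ideals decompose along the grading of the polynomial ring.

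First, I would recall the unitary $U: H^2_d \to \cH$ constructed in the proof of Proposition \ref{P:toeplitzisom}, defined by $Up = \sqrt{a_n}\, p$ for every homogeneous polynomial $p$ of degree $n$. The crucial observation is that since $\cI$ is a homogeneous ideal, it decomposes as an orthogonal direct sum of its homogeneous components (both in $H^2_d$ and in $\cH$, as the monomials are orthogonal in each unitarily invariant space). Because $U$ acts by scalar multiplication on each homogeneous component, we have $U\cI = \cI$ as sets, and hence $U$ restricts to unitaries $\cI \to \cI$ and $H^2_d \ominus \cI \to \cH_{\cI}$. The restriction of $U$ to $H^2_d \ominus \cI$ will be our desired unitary; by abuse of notation I continue to denote it by $U$. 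In particular, $U^* P_{\cH_\cI} U = P_{H^2_d \ominus \cI}$ when viewed as operators on the full spaces.

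Next, I would verify compactness of the differences. Writing $Q = P_{H^2_d \ominus \cI}$ for the projection in $H^2_d$, and using $U^* P_{\cH_{\cI}} U = Q$ together with the fact that $\cH_{\cI}$ is coinvariant for $A(\cH)$, a short computation yields
\begin{equation*}
U^* \bigl(P_{\cH_{\cI}} M_{z_j}^{\cH}\big|_{\cH_\cI}\bigr) U - P_{H^2_d \ominus \cI} M_{z_j}^{H^2_d}\big|_{H^2_d \ominus \cI}
= Q \bigl(U^* M_{z_j}^{\cH} U - M_{z_j}^{H^2_d}\bigr) Q\big|_{H^2_d \ominus \cI}.
\end{equation*}
The operator in parentheses is compact by Proposition \ref{P:toeplitzisom}, so the compression is compact as well.

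With the compactness of the differences in hand, the inclusion $U^* A(\cH_{\cI}) U \subset A(H^2_d \ominus \cI) + \cK(H^2_d \ominus \cI)$ follows by approximating elements of $A(\cH_{\cI})$ by polynomials in the compressed shifts and noting that a polynomial in the compressed shifts differs from its counterpart on $H^2_d \ominus \cI$ by a compact operator (an induction that uses the fact that compact ideals absorb bounded operators). By Lemma \ref{L:irredideal}, the $C^*$-algebra $C^*(A(H^2_d \ominus \cI))$ contains all compact operators on $H^2_d \ominus \cI$, so this gives $U^* C^*(A(\cH_{\cI})) U \subset C^*(A(H^2_d \ominus \cI))$. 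The reverse inclusion is symmetric, replacing the roles of $\cH$ and $H^2_d$ and using $U^{-1}$ in place of $U$. The final assertion about essential normality is then immediate from the unitary equivalence modulo compacts of the coordinate operators.

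The main obstacle is not a serious one: the whole argument essentially rests on the observation that $U$ respects the homogeneous grading, so that it restricts well to $\cI$ and to its orthogonal complement. Once this is noted, the compactness assertions and the $C^*$-algebra identification reduce to the non-quotient case already handled in Proposition \ref{P:toeplitzisom} and Lemma \ref{L:irredideal}.
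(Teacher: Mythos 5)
Your argument is correct and is essentially the paper's own proof: both use the grading-preserving unitary $U$ from Proposition \ref{P:toeplitzisom}, the observation that homogeneity of $\cI$ gives $U\cI=\cI$ and hence $UP_{H^2_d\ominus\cI}=P_{\cH_\cI}U$, the identity expressing the difference of compressed shifts as a compression of $U^*M_{z_j}^{\cH}U-M_{z_j}^{H^2_d}$, and Lemma \ref{L:irredideal} to absorb the compacts into both $C^*$-algebras.
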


\begin{proof}
  Since $\cI$ is homogeneous, the unitary operator $U: H^2_d \to \cH$ constructed in the proof of Proposition
  \ref{P:toeplitzisom} maps $\cI$ onto $\cI$. Therefore, it maps $H^2_d \ominus \cI$ onto $\cH_{\cI}$
  and
  \begin{equation*}
    U P_{H^2_d \ominus \cI} = P_{\cH_{\cI}} U,
  \end{equation*}
  so that for $j=1,\ldots,d$, we obtain
  \begin{equation*}
    U^* P_{\cH_{\cI}} M_{z_j}^{\cH} \big|_{\cH_{\cI}} U - 
    P_{H^2_d \ominus \cI} M_{z_j}^{H^2_d} \big|_{H^2_d \ominus \cI}
    = P_{H^2_d \ominus \cI} (U^* M_{z_j}^{\cH} U - M_{z_j}^{H^2_d}) \big|_{H^2_d \ominus \cI},
  \end{equation*}
  which is a compact operator by Proposition \ref{P:toeplitzisom}.
  Since both   $C^*(A( H^2_d \ominus \cI))$ and $C^*(A(\cH_\cI))$ contain the compact operators
  by Lemma \ref{L:irredideal}, we see as in the proof of Proposition \ref{P:toeplitzisom}
  that   
  \[
  U^* C^*(A(\cH_\cI)) U=C^*(A(H^2_d\ominus \cI)). \qedhere
  \]
\end{proof}

We remark that Wernet in fact obtained finer results relating the $p$-essential
normality of operator tuples $P_{\cH_{\cI}} M_z \big|_{\cH_{\cI}}$
in different spaces $\cH$, but the version above suffices for our purposes.
Whenever Arveson's conjecture holds, we obtain the following generalization of Theorem \ref{T:essentiallynormalan}.

\begin{theorem}\label{T:SESideal}
Let $\cH$ be a regular unitarily invariant space on $\bB_d$
and let $\cI\subset \bC[z_1,\ldots,z_d]$ be a proper
homogeneous ideal satisfying Arveson's conjecture.
Then there exists a short exact sequence
\begin{equation*}
  0 \longrightarrow \cK(\cH_{\cI}) \longrightarrow C^*(A(\cH_{\cI})) \longrightarrow C(V(\cI) \cap \partial \bB_d) \longrightarrow 0,
\end{equation*}
where the first map is the inclusion and the second map sends $S_j$ to the coordinate
function $z_j$ for $1 \le j \le d$.
\end{theorem}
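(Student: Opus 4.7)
The plan is to first reduce to the Drury-Arveson case, then identify the maximal ideal space of the commutative quotient of $C^*(A(H^2_d\ominus\cI))$ by its ideal of compact operators. By Proposition \ref{P:toeplitzisomideal}, there is a unitary intertwining $C^*(A(\cH_\cI))$ with $C^*(A(H^2_d\ominus\cI))$ modulo compact perturbations of the generators, so the short exact sequence for $\cH_\cI$ follows from the corresponding one for $H^2_d\ominus\cI$. Arveson's conjecture ensures that the compressed shifts $S_1^\cI,\ldots,S_d^\cI$ are essentially normal, and Lemma \ref{L:irredideal} places $\cK(H^2_d\ominus\cI)$ inside $C^*(A(H^2_d\ominus\cI))$. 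The quotient $C^*(A(H^2_d\ominus\cI))/\cK$ is therefore a unital commutative $C^*$-algebra generated by the images of the $S_j^\cI$ and their adjoints, and by Gelfand duality it is $*$-isomorphic to $C(\sigma)$ where $\sigma\subseteq\bC^d$ is the joint spectrum of the image tuple. The proof thus reduces to the identification $\sigma=V(\cI)\cap\partial\bB_d$.

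The inclusion $\sigma\subseteq V(\cI)\cap\partial\bB_d$ is straightforward. For any $p\in\cI$ one has $M_p H^2_d\subseteq\overline{\cI}^{H^2_d}$, because polynomials are dense in $H^2_d$ and $\cI$ is an ideal; compression to the coinvariant subspace $\cH_\cI$ then yields $p(S^\cI)=0$, so $p$ vanishes on $\sigma$. Applying Theorem \ref{T:essentiallynormalan} and compressing $I-\sum_j M_{z_j}M_{z_j}^*$ to $\cH_\cI$ produces $I_{\cH_\cI}-\sum_j S_j^\cI (S_j^\cI)^*\in\cK$, so every point of $\sigma$ lies on $\partial\bB_d$.

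The main obstacle is the reverse inclusion. For $\lambda\in V(\cI)\cap\partial\bB_d$, I would construct a $*$-homomorphism $\pi_\lambda:C^*(A(H^2_d\ominus\cI))\to\bC$ sending $S_j^\cI$ to $\lambda_j$ and annihilating $\cK$; its existence exhibits $\lambda\in\sigma$. Since $\sum_j|\lambda_j|^2=1$ and the coefficients in $1-1/k(z,w)=\sum_n b_n\langle z,w\rangle^n$ satisfy $\sum_n b_n\leq 1$, the tuple $\lambda$ viewed as operators on $\bC$ satisfies $1/k(\lambda,\lambda^*)\geq 0$, so Theorem \ref{T:A_H_representation} produces a unital completely contractive character $\chi_\lambda:A(H^2_d)\to\bC$ with $\chi_\lambda(M_{z_j})=\lambda_j$. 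The delicate step is to descend $\chi_\lambda$ through the compression homomorphism $\tau:A(H^2_d)\to A(H^2_d\ominus\cI)$ to obtain $\rho$ with $\rho(S_j^\cI)=\lambda_j$; this hinges on the forward-referenced Corollary \ref{C:quotient}, which identifies $A(\cH_\cI)$ with $A(\cH)$ modulo the closed ideal generated by $\cI$ and thus ensures $\chi_\lambda$ vanishes on $\ker\tau$. Once $\rho$ is in hand, the tuple $(\lambda_1,\ldots,\lambda_d)$ on $\bC$ is trivially a spherical unitary, so Theorem \ref{T:sphunitary_uep} grants $\rho$ the unique extension property, producing the desired $\pi_\lambda$. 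That $\pi_\lambda$ annihilates $\cK$ is automatic: otherwise $\pi_\lambda|_\cK$ would be an injective $*$-homomorphism from the infinite-dimensional simple algebra $\cK$ into $\bC$, which is impossible.
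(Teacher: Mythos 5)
Your proposal is correct, but it takes a genuinely different route for the key ingredient. The paper's proof is essentially a two-line reduction: it quotes the short exact sequence for $H^2_d \ominus \cI$ from Guo--Wang \cite[Theorem 5.1 (4)]{GW08} and then transfers it to a general regular unitarily invariant $\cH$ via Proposition \ref{P:toeplitzisomideal}. You perform the same transfer step, but instead of citing the external result you rebuild the Drury--Arveson sequence from the paper's own machinery: Lemma \ref{L:irredideal} places $\cK$ inside $C^*(A(H^2_d\ominus\cI))$, essential normality (the definition of ``satisfies Arveson's conjecture'') together with Fuglede--Putnam makes the quotient commutative, Gelfand duality identifies it with $C(\sigma)$ for the joint spectrum $\sigma$ of the image tuple, and you pin down $\sigma = V(\cI)\cap\partial\bB_d$ --- the easy inclusion from $p(S^\cI)=0$ and compactness of $I-\sum_j M_{z_j}M_{z_j}^*$, the hard inclusion by manufacturing a character via Theorem \ref{T:A_H_representation}, descent through $\overline{\cI}$ (Corollary \ref{C:quotient}, or more directly Theorem \ref{T:rep_A_I}), and the unique extension property from Theorem \ref{T:sphunitary_uep}. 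This is legitimate: although Corollary \ref{C:quotient} and Theorem \ref{T:rep_A_I} appear after Theorem \ref{T:SESideal} in the paper, their proofs do not use it, so there is no circularity; and the complete Nevanlinna--Pick hypotheses they carry are satisfied because you apply them only to $H^2_d$, not to the (possibly non-NP) space $\cH$ of the theorem. What your approach buys is a self-contained argument within the paper's toolkit; what the paper's citation buys is brevity and independence from the NP machinery even in the model case. Two small points worth making explicit: the commutativity of $C^*(A(H^2_d\ominus\cI))/\cK$ uses Fuglede--Putnam, not essential normality alone; and in the step where $\pi_\lambda$ annihilates $\cK$ you should note that when $V(\cI)\cap\partial\bB_d \neq \emptyset$ the space $H^2_d\ominus\cI$ is infinite-dimensional (it contains $\langle\cdot,\lambda\rangle^n$ for all $n$ by homogeneity of $\cI$), while in the case $V(\cI)\cap\partial\bB_d=\emptyset$ there are no characters to construct and your spectral inclusion already forces the quotient to vanish, consistent with the convention $C(\emptyset)=0$.
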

\begin{proof}
By  \cite[Theorem 5.1 (4)]{GW08} (and the following discussion therein) there is a short exact sequence
\begin{equation*}
  0 \longrightarrow \cK(H^2_d \ominus \cI) \longrightarrow C^*(A(H^2_d \ominus \cI)) \longrightarrow C(V(\cI) \cap \partial \bB_d) \longrightarrow 0.
\end{equation*}
Using this fact along with Proposition \ref{P:toeplitzisomideal} completes the proof.
\end{proof}

We use the convention that $C(\emptyset)$ is
the zero space. Thus, under the conditions of the previous theorem, if $V(\cI) \cap \partial \bB_d = \emptyset$ then $C^*(A(\cH_\cI)) =
\cK(\cH_\cI)$.

\section{Algebras of multipliers and homogeneous ideals}\label{S:ideals}
We now turn our efforts to extending the results of Sections \ref{S:coextue} and \ref{S:AH}
to the algebras $A(\cH_\cI)$ introduced above.
While we will seemingly be retreading old ground by proving  generalizations of previously stated results,
we feel that dealing with the more general setting separately makes for easier reading,
in particular given the required preparation (which was taken care of in Section \ref{S:ess_norm}).

Our first goal is to generalize Theorem \ref{T:A_H_representation}.
To this end, we begin
with the following straightforward adaptation of \cite[Theorems 5 and 6]{ambrozie2002}.

\begin{theorem}
  \label{T:AEM_ideal}
  Let $\cH$ be a unitarily invariant complete Nevanlinna-Pick space on $\bB_d$
  with reproducing kernel $k$. Let $\cI\subset \bC[z_1,\ldots,z_d]$ be a proper homogeneous ideal. Let $T = (T_1,\ldots,T_d) $
  be a commuting tuple of operators on some Hilbert space $\cE$ with the following properties:
  \begin{enumerate}[label=\normalfont{(\arabic*)}]
    \item $\sigma(T) \subset \bB_d$,
    \item $1/k(T,T^*) \ge 0$, and
    \item $p(T) = 0$ for all $p \in \cI$.
  \end{enumerate}
  Then there exists an isometry $V: \cE \to \cH_\cI \otimes \cE$
  such that
  \begin{equation*}
    (S_j^* \otimes I_{\cE}) V = V T_j^*
  \end{equation*}
  for $j = 1,\ldots,d$.
\end{theorem}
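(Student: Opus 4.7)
The plan is to reduce this statement to the known case $\cI = \{0\}$, which is exactly what was used in the proof of Theorem \ref{T:A_H_representation}: under hypotheses (1) and (2), \cite[Theorems 5 and 6]{ambrozie2002} (combined with Remark \ref{rem:AEM} to translate the meaning of $1/k(T,T^*) \ge 0$) produce an isometry $V_0: \cE \to \cH \otimes \cE$ satisfying the full-space intertwining relation $(M_{z_j}^* \otimes I_\cE) V_0 = V_0 T_j^*$ for $1 \le j \le d$. All that remains is to show that the range of $V_0$ is actually contained in $\cH_\cI \otimes \cE$, after which one simply sets $V$ equal to the corestriction of $V_0$.

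To do that, I would first extend the intertwining relation from coordinates to polynomials: a routine induction gives $(p(M_z)^* \otimes I_\cE) V_0 = V_0 p(T)^*$ for every $p \in \bC[z_1,\ldots,z_d]$. Hypothesis (3) then says that for $p \in \cI$ the right-hand side is zero, so $V_0 \cE \subseteq \ker(p(M_z)^* \otimes I_\cE)$. Using the elementary identity $\ker(A \otimes I_\cE) = \ker(A) \otimes \cE$ for $A \in B(\cH)$ and intersecting over $p \in \cI$ yields
\[
V_0 \cE \;\subseteq\; \Bigl(\bigcap_{p \in \cI} \ker(p(M_z)^*)\Bigr) \otimes \cE.
\]

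The main computational step is then the identification $\bigcap_{p \in \cI} \ker(p(M_z)^*) = \cH_\cI$. Passing to orthogonal complements, this reduces to showing $\overline{\sum_{p \in \cI} p(M_z)\cH} = \overline{\cI}$ inside $\cH$. One inclusion is immediate because $1 \in \cH$ (since $a_0 = 1$), giving $p = p(M_z) 1 \in p(M_z)\cH$. For the reverse inclusion, I would use that polynomials are dense in $\cH$ (they form an orthogonal basis for a unitarily invariant space): for $p \in \cI$ and $h \in \cH$, approximate $h$ by polynomials $h_n$; since $M_p$ is bounded, $p \cdot h_n \to p \cdot h$ in $\cH$, and each $p \cdot h_n$ lies in $\cI$ because $\cI$ is an ideal in $\bC[z_1,\ldots,z_d]$. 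Hence $p(M_z)\cH \subseteq \overline{\cI}$, which finishes the identification.

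With this in hand, define $V: \cE \to \cH_\cI \otimes \cE$ as the corestriction of $V_0$; it is still an isometry. Since $\cH_\cI$ is coinvariant for $M_{z_j}$, one has $S_j^* = M_{z_j}^*|_{\cH_\cI}$, so the full-space intertwining relation restricts to $(S_j^* \otimes I_\cE) V = V T_j^*$, completing the proof. I do not anticipate any serious obstacle: the AEM dilation does all the heavy lifting, and hypothesis (3) together with the ideal structure of $\cI$ forces the dilation space to collapse onto $\cH_\cI \otimes \cE$ for purely algebraic reasons.
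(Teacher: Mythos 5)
Your proposal is correct and follows essentially the same route as the paper: apply the Ambrozie-Engli\v{s}-M\"uller theorem (via Remark \ref{rem:AEM}) to get an isometry intertwining $T^*$ with $M_z^* \otimes I_{\cE}$, show its range lies in $\cH_\cI \otimes \cE$ using hypothesis (3), and then restrict. The only (harmless) difference is in that middle step: the paper checks range containment by the one-line computation $\langle Vx, p \otimes y\rangle = \langle V p(T)^* x, 1 \otimes y\rangle = 0$ for $p \in \cI$, whereas you establish the identification $\bigcap_{p \in \cI} \ker\bigl(p(M_z)^* \otimes I_{\cE}\bigr) = \cH_\cI \otimes \cE$ via a correct density argument using that polynomials are multipliers and are dense in $\cH$.
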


\begin{proof}
  In view of conditions (1) and (2), we may invoke \cite[Theorems 5 and 6]{ambrozie2002} to find an isometry $V: \cE \to \cH \otimes \cE$ such that
  $(M_{z_j}^* \otimes I_{\cE}) V = V T_j^*$ for all $j=1,\ldots,d$.  Hence
  \begin{equation*}
    (M_{p}^* \otimes I_{\cE}) V = V p(T)^*
  \end{equation*}
  for every $p \in \bC[z_1,\ldots,z_d]$. We claim that the range
  of $V$ is contained in $\cH_\cI \otimes \cE$.
  To this end, let $p \in \cI$ and $x,y \in \cE$. By (3) we find $p(T)=0$ and thus
  \begin{equation*}
    \langle V x, p \otimes y \rangle =
    \langle (M_p^* \otimes I_{\cE}) V x, 1 \otimes y \rangle
    = \langle V p(T)^* x, 1 \otimes y \rangle = 0,
  \end{equation*}
which establishes the claim. Therefore, we may regard $V$
as an isometry from $\cE$ into $\cH_{\cI} \otimes \cE$, and
since $\cH_\cI \otimes \cE$
  is invariant under $M_{z_j}^* \otimes I_{\cE}$, we see that
  \begin{equation*}
    (S_{j}^* \otimes I_{\cE}) V = V T_j^*
  \end{equation*}
  for $j = 1,\ldots,d$.
\end{proof}

It is noteworthy that the spectral assumption (1) in the previous theorem can be replaced with a suitable
purity condition. This is achieved by invoking
\cite[Corollary 3.2]{AE2003} rather than \cite[Theorems 5 and 6]{ambrozie2002}.
However, the above version suffices for our purpose, which is to
establish the following generalization of Theorem \ref{T:A_H_representation} and Corollary \ref{cor:automatic_coextension}. 

\begin{theorem}
  \label{T:rep_A_I}
  
   Let $\cH$ be a unitarily invariant complete Nevanlinna-Pick space on $\bB_d$
  with reproducing kernel $k$. Let $\cI \subset \bC[z_1,\ldots,z_d]$ be a proper homogeneous ideal. Let $T=(T_1,\ldots,T_d)$ be a commuting tuple of operators on a Hilbert space $\cE$.
Then, the following assertions are equivalent.
      
\begin{enumerate}[label=\normalfont{(\roman*)}]
\item The tuple $T$ satisfies $1/k(T,T^*)\geq 0$ and $p(T)=0$ for every $p\in \cI$.

\item There exists a unique unital completely contractive homomorphism 
\[
\rho: A(\cH_\cI) \to B(\cE)
\]
such that $\rho(S) = T$. 
 \end{enumerate}
 Moreover, in this case the map $\rho$ admits a $C^*(A(\cH_\cI))$-coextension.
 In particular, every unital completely contractive representation of $A(\cH_\cI)$
 admits a $C^*(A(\cH_\cI))$-coextension.
\end{theorem}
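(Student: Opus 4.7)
The strategy is to mirror the proof of Theorem \ref{T:A_H_representation}, replacing the appeal to \cite{ambrozie2002} by Theorem \ref{T:AEM_ideal} and propagating the vanishing condition $p(T)=0$ throughout. For the direction (ii) $\Rightarrow$ (i), I would first note that the model tuple $S$ itself satisfies (i): the identity $p(S)=0$ for $p\in\cI$ is recorded immediately after the definition of $S$, and $1/k(S,S^*)\geq 0$ follows from the reformulation after Lemma \ref{L:1/k}, since $\cH_\cI$ is coinvariant for $M_z^{\cH}$, so $\psi_k(S)$ is a compression of the row contraction $\psi_k(M_z)$ and is therefore itself a row contraction. Given a unital completely contractive representation $\rho$ with $\rho(S)=T$, applying $\rho$ entrywise to $\psi_k(S)$ yields the row contraction $\psi_k(T)$, so $1/k(T,T^*)\geq 0$, while applying $\rho$ to the relation $p(S)=0$ produces $p(T)=0$.

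For (i) $\Rightarrow$ (ii), uniqueness of $\rho$ is immediate from the fact that $S_1,\ldots,S_d$ generate $A(\cH_\cI)$ as a Banach algebra. To produce $\rho$ together with a $C^*(A(\cH_\cI))$-coextension, I plan to construct a unital completely positive map $\psi:C^*(A(\cH_\cI))\to B(\cE)$ satisfying $\psi(S_j)=T_j$ and the hereditary identity $\psi(S_j S_j^*)=T_jT_j^*$ for every $j$. Given such a $\psi$, a Stinespring dilation $\pi$ of $\psi$ combined with Lemma \ref{L:hered_coinvariant} makes $\cE$ coinvariant for each $\pi(S_j)$, and hence for $\pi(A(\cH_\cI))$; the compression of $\pi$ to $\cE$ is then the desired $\rho$ and $\pi$ is the required coextension.

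The construction of $\psi$ proceeds in two steps. If $\sigma(T)\subset \bB_d$, Theorem \ref{T:AEM_ideal} delivers an isometry $V:\cE\to \cH_\cI \otimes \cE$ with $(S_j^*\otimes I_\cE)V = VT_j^*$ for all $j$. Setting $\psi(X)=V^*(X\otimes I_\cE)V$ on $C^*(A(\cH_\cI))\subset B(\cH_\cI)$ then yields a unital completely positive map with $\psi(S_j)=T_j$ and $\psi(S_jS_j^*)=T_jT_j^*$ by a direct computation using the intertwining. In general, Lemma \ref{L:spectrum} only ensures $\sigma(T)\subset \ol{\bB_d}$, so I would approximate by $rT$ with $0<r<1$: the spectrum of $rT$ lies in $\bB_d$, the non-negativity of the coefficients $b_n$ gives $1/k(rT,(rT)^*)\geq 0$, and the homogeneity of $\cI$ yields $p(rT)=\sum_k r^k p_k(T)=0$ for the homogeneous decomposition $p=\sum_k p_k$. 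Applying the first step to each $rT$ produces unital completely positive maps $\psi_r$ with $\psi_r(S_j)=rT_j$ and $\psi_r(S_jS_j^*)=r^2 T_jT_j^*$, and a BW-cluster point of the net $\{\psi_r\}_{0<r<1}$ delivers the required $\psi$, exactly as in the proof of Theorem \ref{T:A_H_representation}.

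The final ``in particular'' statement then follows at once: for any unital completely contractive representation $\rho:A(\cH_\cI)\to B(\cE)$, the tuple $T=\rho(S)$ satisfies (i) by the (ii) $\Rightarrow$ (i) argument above, so the main equivalence furnishes the desired coextension. I expect the principal technical point to be the passage from pure spectrum to the general case via the $rT$ scaling trick, and in particular, verifying that the hereditary identity survives in the BW-limit; however, this last step is essentially identical to the corresponding argument in the proof of Theorem \ref{T:A_H_representation}.
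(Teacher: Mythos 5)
Your proposal is correct and follows essentially the same route as the paper: the implication (i) $\Rightarrow$ (ii) together with the coextension is obtained by repeating the proof of Theorem \ref{T:A_H_representation} with Theorem \ref{T:AEM_ideal} in place of the Ambrozie-Engli\v{s}-M\"uller theorem, using homogeneity of $\cI$ to get $p(rT)=0$ in the scaling step, and the ``in particular'' statement follows as you describe. For (ii) $\Rightarrow$ (i) the paper simply composes $\rho$ with the compression homomorphism $A(\cH)\to A(\cH_\cI)$ and cites Theorem \ref{T:A_H_representation}, which amounts to the same row-contraction/compression argument you spell out directly.
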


\begin{proof}
Assume first that (ii) holds. The composition of the map
\begin{equation*}
  A(\cH) \to A(\cH_\cI), \quad M_\varphi \mapsto P_{\cH_{\cI}} M_\varphi \big|_{\cH_{\cI}},
\end{equation*}
with $\rho$ is a unital completely contractive representation of $A(\cH)$ which maps $M_{z}$ to
$T$, hence $1/k(T,T^*) \ge 0$ by Theorem \ref{T:A_H_representation}.
Moreover, $p(T)=\rho(p(S))=0$ for every $p\in \cI$, and (i) follows.

The proof that (i) implies (ii) and of the fact that $\rho$ admits a $C^*(A(\cH_\cI))$-coextension is identical to that of the corresponding implication in Theorem \ref{T:A_H_representation}, upon using Theorem \ref{T:AEM_ideal}. Note that the fact that $\cI$ is homogeneous is needed to ensure that $p(r T) = 0$ for all $p \in \cI$ and all $0<r<1$.
\end{proof}

We now make a brief digression to illustrate how the last result can be used to show that $A(\cH_\cI)$ is completely
isometrically isomorphic to $A(\cH) / \ol{\cI}$, where $\ol{\cI}\subset A(\cH)$ is the closure of $\cI$ in the operator norm. 
\begin{corollary}
  \label{C:quotient}
  Let $\cH$ be a unitarily invariant complete Nevanlinna-Pick space on $\bB_d$
  and let $\cI \subset \bC[z_1,\ldots,z_d]$ be a proper homogeneous ideal. Then the map
  \begin{equation*}
    \Phi: A(\cH) / \ol{\cI} \to A(\cH_\cI), \quad M_\varphi + \ol{\cI} \mapsto P_{\cH_{\cI}} M_\varphi \big|_{\cH_{\cI}},
  \end{equation*}
  is a unital completely isometric isomorphism.
\end{corollary}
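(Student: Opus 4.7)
The plan is to deduce the result from the factorization of representations provided by Theorems~\ref{T:A_H_representation} and \ref{T:rep_A_I}. First I will check that $\Phi$ is a well-defined unital completely contractive homomorphism, and then upgrade it to a complete isometry by invoking a completely isometric representation of the abstract quotient $A(\cH)/\ol\cI$ together with the representation-theoretic characterizations.

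For well-definedness, observe that the compression map $q: A(\cH) \to A(\cH_\cI)$ sending $M_\varphi$ to $P_{\cH_\cI} M_\varphi \big|_{\cH_\cI}$ is a unital completely contractive homomorphism since $\cH_\cI$ is coinvariant for $A(\cH)$. For any $p \in \cI$ and $h \in \cH$, the product $p h$ lies in the $\cH$-closure of $\cI$ (using the density of polynomials in $\cH$ together with the fact that $p \cdot \bC[z_1,\ldots,z_d] \subset \cI$); hence $P_{\cH_\cI} M_p = 0$, and in particular $p(S) = 0$ by Sarason's identity. By continuity, $q$ annihilates $\ol\cI$ and descends to the unital completely contractive homomorphism $\Phi$.

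For the complete isometry, I would fix a unital completely isometric homomorphism $j: A(\cH)/\ol\cI \to B(\cE)$, whose existence is furnished by the general theory of operator algebra quotients. The composition $\tilde\rho := j \circ \pi: A(\cH) \to B(\cE)$, with $\pi$ the quotient map, is unital completely contractive and vanishes on $\ol\cI$. Setting $T := \tilde\rho(M_z)$, Theorem~\ref{T:A_H_representation} gives $1/k(T, T^*) \ge 0$; moreover $p(T) = 0$ for every $p \in \cI$, since $\tilde\rho$ annihilates $\cI$. Theorem~\ref{T:rep_A_I} then produces a unital completely contractive homomorphism $\sigma: A(\cH_\cI) \to B(\cE)$ with $\sigma(S) = T$. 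The two homomorphisms $\sigma \circ \Phi$ and $j$ from $A(\cH)/\ol\cI$ to $B(\cE)$ agree on cosets of polynomials in $M_z$, and so coincide by density. Consequently, for every $a \in M_n(A(\cH)/\ol\cI)$,
\[
\|a\|_{M_n(A(\cH)/\ol\cI)} = \|j^{(n)}(a)\| = \|\sigma^{(n)}(\Phi^{(n)}(a))\| \le \|\Phi^{(n)}(a)\|,
\]
which, combined with the complete contractivity of $\Phi$, shows that $\Phi$ is a complete isometry. Surjectivity follows since the image of $\Phi$ contains $S_1,\ldots,S_d$ and is therefore dense in $A(\cH_\cI)$, while complete isometry forces the image to be closed.

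The main obstacle is the construction of the factorization $\sigma \circ \Phi = j$, which depends crucially on the characterization of unital completely contractive representations of both $A(\cH)$ and $A(\cH_\cI)$ via the positivity condition $1/k(T,T^*) \ge 0$ together with vanishing on $\cI$. Without the joint application of Theorems~\ref{T:A_H_representation} and \ref{T:rep_A_I}, there would be no direct mechanism to transfer a representation of $A(\cH)$ vanishing on $\ol\cI$ to one of $A(\cH_\cI)$, and the complete isometry assertion would not follow merely from the abstract factoring through the kernel of $q$.
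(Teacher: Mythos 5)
Your proof is correct and follows essentially the same route as the paper: well-definedness from the fact that $\cI$ lies in the kernel of the compression map, a completely isometric representation of the abstract quotient (the Blecher--Ruan--Sinclair theorem), and then Theorems \ref{T:A_H_representation} and \ref{T:rep_A_I} to factor that representation through $A(\cH_\cI)$, yielding the reverse complete-norm inequality. The only cosmetic difference is that the paper phrases the last step as exhibiting $\pi^{-1}\circ\rho$ as a completely contractive inverse of $\Phi$, whereas you compare matrix norms directly via $\sigma\circ\Phi=j$.
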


\begin{proof}
  Since $\cI$ is contained in the kernel of the unital completely contractive homomorphism
  \begin{equation*}
    A(\cH) \to A(\cH_\cI), \quad M_\varphi \mapsto P_{\cH_{\cI}} M_\varphi \big|_{\cH_{\cI}},
  \end{equation*}
  the map $\Phi$ is
  a well defined unital completely contractive homomorphism.

By the Blecher-Ruan-Sinclair theorem \cite[Corollary 16.7]{paulsen2002}, there exists
  a unital completely isometric homomorphism $\pi: A(\cH) / \ol{\cI} \to B(\cE)$
  for some Hilbert space $\cE$. Let $q: A(\cH) \to A(\cH) / \ol{\cI}$ denote the
  unital completely contractive quotient map, and let $T = \pi( q(M_z))$. Then it is obvious
  that $p(T) = 0$ for all $p \in \cI$,
  and applying Theorem \ref{T:A_H_representation} to $\pi \circ q$ shows that $1/k(T,T^*) \ge 0$. Next, Theorem \ref{T:rep_A_I}
  yields a unital completely contractive homomorphism $\rho: A(\cH_\cI) \to B(\cE)$
  which maps $S^\cI$ to $T$. Observe that the range of $\rho$ is contained in the range of $\pi$,
  and that $\pi^{-1} \circ \rho$ is a unital completely contractive homomorphism
  which maps $S^{\cI}$ to $M_z + \ol{\cI}$. Thus, $\pi^{-1} \circ \rho$ is a completely contractive
  inverse of $\Phi$, so that $\Phi$ is a unital completely isometric isomorphism.
\end{proof}

Alternatively, a proof of this corollary can be based on the commuting lifting theorem
for complete Nevanlinna-Pick spaces of Ball, Trent and Vinnikov \cite{BTV01}.

Next, we extract a generalization of Theorem \ref{T:1/k_dilation} from Theorem \ref{T:rep_A_I}. In order to apply Theorem \ref{T:SESideal}, we need to require that $\cI$
satisfies Arveson's conjecture.

\begin{theorem}
  \label{T:1/k_dilation_ideal}
  Let $\cH$ be a regular unitarily invariant complete Nevanlinna-Pick space on $\bB_d$
  with reproducing kernel $k$, let $\cI \subset \bC[z_1,\ldots,z_d]$
  be a proper homogeneous ideal which satisfies Arveson's conjecture and let $T$ be a commuting
  tuple of operators on a Hilbert space . Then the following assertions
  are equivalent.
  \begin{enumerate}[label=\normalfont{(\roman*)}]
  \item The tuple $T$ satisfies $1/k(T,T^*) \ge 0$ and $p(T) = 0$ for all $p \in \cI$.
    \item
  The tuple $T$ coextends
  to $(S^{\cI})^{(\kappa)} \oplus U$ for some cardinal $\kappa$
  and a spherical unitary $U$ whose joint spectrum is contained in $V(\cI) \cap \partial \bB_d$.
\item The tuple $T$ dilates to $(S^\cI)^{(\kappa)} \oplus U$ for some cardinal $\kappa$ and a spherical
  unitary $U$ whose joint spectrum is contained in $V(\cI) \cap \partial \bB_d$.
  \end{enumerate}
\end{theorem}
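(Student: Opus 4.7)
The plan is to adapt the proof of Theorem \ref{T:1/k_dilation} by systematically replacing Theorem \ref{T:A_H_representation} with its ideal-theoretic counterpart Theorem \ref{T:rep_A_I}, and Theorem \ref{T:essentiallynormalan} with Theorem \ref{T:SESideal}. The hypothesis that $\cI$ satisfies Arveson's conjecture is exactly what enables the use of the latter.

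For the implication (i) $\Rightarrow$ (ii), I would first invoke Theorem \ref{T:rep_A_I} to produce a unital completely contractive homomorphism $\rho \colon A(\cH_\cI) \to B(\cE)$ with $\rho(S^\cI) = T$, which admits a $C^*(A(\cH_\cI))$-coextension $\pi \colon C^*(A(\cH_\cI)) \to B(\cL)$ (so that $\cE$ is coinvariant for $\pi(A(\cH_\cI))$ and $P_{\cE} \pi(S^\cI)|_{\cE} = T$). Since $\cI$ satisfies Arveson's conjecture, Theorem \ref{T:SESideal} furnishes the short exact sequence
\[
0 \longrightarrow \cK(\cH_\cI) \longrightarrow C^*(A(\cH_\cI)) \longrightarrow C(V(\cI) \cap \partial \bB_d) \longrightarrow 0,
\]
and Lemma \ref{lem:rep_split} then splits $\pi = \pi_1 \oplus \pi_2$, where $\pi_1(S^\cI)$ is unitarily equivalent to $(S^\cI)^{(\kappa)}$ for some cardinal $\kappa$ and $\pi_2$ annihilates $\cK(\cH_\cI)$. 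The latter then factors through $C(V(\cI)\cap \partial \bB_d)$, making $U = \pi_2(S^\cI)$ a spherical unitary with joint spectrum contained in $V(\cI) \cap \partial \bB_d$. Coinvariance of $\cE$ then gives the desired coextension.

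The implication (ii) $\Rightarrow$ (iii) is immediate, since every coextension is a dilation.

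For (iii) $\Rightarrow$ (i), a spherical unitary $U$ with joint spectrum in $V(\cI) \cap \partial \bB_d$ provides a $*$-representation of $C(V(\cI) \cap \partial \bB_d)$ sending $z_j$ to $U_j$; composing with the quotient map from Theorem \ref{T:SESideal} yields a $*$-representation of $C^*(A(\cH_\cI))$ sending $S_j^\cI$ to $U_j$. Together with $\kappa$ copies of the identity representation, this produces a $*$-representation $\sigma$ of $C^*(A(\cH_\cI))$ with $\sigma(S^\cI) = (S^\cI)^{(\kappa)} \oplus U$. Since $T$ dilates to $\sigma(S^\cI)$, the subspace $\cE$ is semi-invariant for $\sigma(A(\cH_\cI))$ by Sarason's theorem, so the compression $P_\cE \sigma(\cdot)|_\cE$ is a unital completely contractive homomorphism of $A(\cH_\cI)$ mapping $S^\cI$ to $T$. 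Theorem \ref{T:rep_A_I} then yields (i). I do not expect any serious obstacle, since the argument closely tracks the proof of Theorem \ref{T:1/k_dilation}; the only new ingredient is Theorem \ref{T:SESideal}, which bootstraps Arveson's conjecture into the splitting argument in both directions.
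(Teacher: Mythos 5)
Your proposal is correct and is essentially the paper's own argument: the paper proves Theorem \ref{T:1/k_dilation_ideal} by repeating the proof of Theorem \ref{T:1/k_dilation} verbatim, replacing Theorem \ref{T:A_H_representation} with Theorem \ref{T:rep_A_I} and Theorem \ref{T:essentiallynormalan} with Theorem \ref{T:SESideal}, which is exactly what you carry out. Your detailed treatment of both directions, including the splitting via Lemma \ref{lem:rep_split} and the construction of the $*$-representation from the spherical unitary with joint spectrum in $V(\cI) \cap \partial \bB_d$, matches the intended proof.
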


\begin{proof}
  The proof is almost identical to that of Theorem \ref{T:1/k_dilation}; we merely
  need to replace the application of Theorem \ref{T:A_H_representation} and Theorem
  \ref{T:essentiallynormalan} with an application of Theorem \ref{T:rep_A_I} and
  Theorem \ref{T:SESideal}, respectively.
\end{proof}

We remark that for a spherical unitary $U$, the following assertions are equivalent:
\begin{enumerate}[label=\normalfont{(\roman*)}]
  \item the joint spectrum of $U$ is contained in $V(\cI) \cap \partial \bB_d$,
  \item $p(U) = 0$ for all $p \in \cI$, and
  \item $p(U) = 0$ for all $p \in \sqrt{\cI}$, the radical of $\cI$.
\end{enumerate}
Thus,  Theorem \ref{T:1/k_dilation_ideal} could be reformulated using these conditions where appropriate.

Another consequence of Theorem \ref{T:rep_A_I} is a generalization of Proposition \ref{P:UEP_bigspace}.

\begin{proposition}
  \label{P:UEPkernel_ideal}
     Let $\cH$ be a unitarily invariant complete Nevanlinna-Pick space on $\bB_d$
  with reproducing kernel 
  \[
  k(z,w) = \sum_{n = 0}^\infty a_n \langle z,w \rangle^n.
  \]
Let $\cI \subset \bC[z_1,\ldots,z_d]$ be a proper homogeneous ideal
and let $U=(U_1,\ldots,U_d)$ be a spherical unitary on some Hilbert space $\cE$ whose
joint spectrum is contained in $V(\cI) \cap \partial \bB_d$.
Then there exists a unique unital completely contractive homomorphism
$\rho: A(\cH_\cI) \to B(\cE)$ such that $\rho(S)=U$. If $\sum_{n=0}^\infty a_n = \infty$, then $\rho$ has the unique extension property.
\end{proposition}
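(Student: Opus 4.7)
The plan is to closely follow the proof of Proposition \ref{P:UEP_bigspace}, with Theorem \ref{T:rep_A_I} replacing Theorem \ref{T:A_H_representation} to handle the ideal $\cI$. Uniqueness of $\rho$ is immediate because $A(\cH_\cI)$ is generated as a Banach algebra by $S_1,\ldots,S_d$. For existence, I will verify that the tuple $U$ satisfies the two hypotheses of Theorem \ref{T:rep_A_I}: that $1/k(U,U^*)\geq 0$, and that $p(U)=0$ for every $p \in \cI$.

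To verify the first hypothesis, I would use the Putnam-Fuglede theorem to conclude that each $\psi_{k,\alpha}(U)$ is normal, and then compute as in the proof of Proposition \ref{P:UEP_bigspace}:
\[
\sum_{\alpha \in \bN^d \setminus \{0\}} \psi_{k,\alpha}(U)\psi_{k,\alpha}(U)^*
= \sum_{\alpha \in \bN^d\setminus\{0\}} b_{|\alpha|}\binom{|\alpha|}{\alpha} U^\alpha (U^*)^\alpha
= \sum_{n=1}^\infty b_n \, I \leq I,
\]
where the last inequality uses $\sum_{n=1}^\infty b_n \leq 1$ (equality holding precisely when $\sum a_n = \infty$). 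This shows that $\psi_k(U)$ is a row contraction, which is equivalent to $1/k(U,U^*)\geq 0$. For the second hypothesis, since $U$ is a tuple of commuting normal operators with joint spectrum contained in $V(\cI)\cap \partial \bB_d$, the spectral theorem yields $p(U)=0$ for every polynomial $p$ vanishing on the joint spectrum, in particular for every $p \in \cI$. Theorem \ref{T:rep_A_I} then produces the desired $\rho$.

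Now assume $\sum_{n=0}^\infty a_n = \infty$. The recurrence $a_0 = 1$ and the identity $(\sum_n a_n t^n)(1 - \sum_n b_n t^n) = 1$ forces $\sum_{n=1}^\infty b_n = 1$, so the computation above shows that $(\rho(\psi_{k,\alpha}(S)))_\alpha = (\psi_{k,\alpha}(U))_\alpha$ is a \emph{spherical unitary}, not merely a row contraction. To apply Theorem \ref{T:sphunitary_uep}, I need $(\psi_{k,\alpha}(S))_\alpha$ to be a row contraction generating $A(\cH_\cI)$ as a unital norm-closed operator algebra. The row contraction property transfers from $\psi_k(M_z)$ (which is a row contraction by Lemma \ref{L:1/k}) to $\psi_k(S)$ via coinvariance of $\cH_\cI$ under $M_{z_j}$: for any $x \in \cH_\cI$, coinvariance gives $\psi_{k,\alpha}(S)^* x = \psi_{k,\alpha}(M_z)^* x$, so
\[
\sum_\alpha \|\psi_{k,\alpha}(S)^* x\|^2 = \sum_\alpha \|\psi_{k,\alpha}(M_z)^* x\|^2 \leq \|x\|^2.
\]

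For the generation, the same recurrence gives $b_1 = a_1 > 0$, so $\psi_{k,e_j}(S) = \sqrt{b_1}\, S_j$ lies in the unital norm-closed algebra $\cA$ generated by $\{\psi_{k,\alpha}(S) : \alpha \in \bN^d \setminus \{0\}\}$. Hence $\cA$ contains $S_1,\ldots,S_d$ and therefore coincides with $A(\cH_\cI)$. Theorem \ref{T:sphunitary_uep} now delivers the unique extension property for $\rho$. There is no essential obstacle here beyond faithfully adapting the argument of Proposition \ref{P:UEP_bigspace}; the only point requiring attention is the verification that the generated algebra is all of $A(\cH_\cI)$, which is ensured by the observation $b_1 = a_1 > 0$.
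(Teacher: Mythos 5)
Your proof is correct and takes essentially the same approach as the paper, whose proof simply observes that the argument of Proposition \ref{P:UEP_bigspace} carries over with Theorem \ref{T:rep_A_I} in place of Theorem \ref{T:A_H_representation}. The details you supply (normality of $\psi_{k,\alpha}(U)$ via Putnam--Fuglede, $p(U)=0$ from the joint spectrum, the row contraction property of $\psi_k(S)$ via coinvariance of $\cH_\cI$, and generation of $A(\cH_\cI)$ from $b_1=a_1>0$) are precisely the points implicit in that adaptation.
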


\begin{proof}
  This follows as in the proof of Proposition \ref{P:UEP_bigspace} by applying
  Theorem \ref{T:rep_A_I} instead of Theorem \ref{T:A_H_representation}.
\end{proof}

Before we adapt Theorem \ref{T:bdryan}, Theorem \ref{T:hyperrigid} and Corollary \ref{C:envelope}, we need some more preparation. The following fact is standard, but we provide a proof for the sake of completeness.

\begin{lemma}
  \label{lem:kernelparts}
  Let $\cH$ be a unitarily invariant reproducing kernel Hilbert space on $\bB_d$ with kernel
  \begin{equation*}
    k(z,w) = \sum_{n=0}^\infty a_n \langle z,w \rangle^n
  \end{equation*}
  and let $\cI \subset \bC[z_1,\ldots,z_d]$
  be a homogeneous ideal. If $w \in V(\cI)$, then $\langle \cdot,w \rangle^n \in \cH_{\cI}$
  and
  \begin{equation*}
    ||\langle \cdot,w \rangle^n||_{\cH_{\cI}}^2 = \frac{1}{a_n} ||w||^{2 n}
  \end{equation*}
  for all $n \in \bN$.
\end{lemma}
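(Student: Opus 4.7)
The plan is to exploit the orthogonal grading of $\cH$ by homogeneous degree. Since the monomials $\{z^\alpha\}_{\alpha \in \bN^d}$ form an orthogonal basis of $\cH$, the space decomposes as $\cH = \bigoplus_{n=0}^\infty \cH_n$, where $\cH_n$ is the finite-dimensional subspace of homogeneous polynomials of degree $n$ endowed with the inherited inner product. Because $\cI$ is a homogeneous ideal, it splits as $\cI = \bigoplus_{n=0}^\infty \cI_n$ with $\cI_n = \cI \cap \cH_n$. Each $\cI_n$ is automatically closed (being finite-dimensional), so $\cH_\cI = \bigoplus_{n=0}^\infty (\cH_n \ominus \cI_n)$. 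Thus to show $\langle \cdot,w\rangle^n \in \cH_\cI$ it suffices to verify that this function, which visibly lies in $\cH_n$, is orthogonal to $\cI_n$ inside $\cH_n$.

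The key observation is that the reproducing kernel of $\cH_n$ (as an RKHS on $\bB_d$ with the inherited inner product) is exactly $a_n \langle z,w\rangle^n$. Indeed, this can be read off from the series expansion of $k(z,w)$ combined with the orthogonal decomposition, or checked directly against the monomial basis. Hence for any $p \in \cH_n$ the reproducing property yields $p(w) = a_n \langle p, \langle \cdot,w\rangle^n \rangle_{\cH_n}$. Fixing $w \in V(\cI)$ and taking $p \in \cI_n$, we have $p(w) = 0$, so $\langle p, \langle \cdot,w\rangle^n\rangle = 0$. This gives the required orthogonality and shows $\langle \cdot,w\rangle^n \in \cH_\cI$.

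For the norm identity, I would apply the reproducing property on $\cH_n$ to $\langle \cdot,w\rangle^n$ itself: $\|\langle \cdot,w\rangle^n\|^2_{\cH_\cI} = \|\langle \cdot,w\rangle^n\|^2_{\cH_n} = \frac{1}{a_n}\langle \cdot,w\rangle^n \big|_{z = w} = \frac{1}{a_n}\|w\|^{2n}$. Alternatively, one can expand via the multinomial theorem and use the formula $\|z^\alpha\|_{\cH}^2 = \frac{1}{a_{|\alpha|}}\frac{\alpha!}{|\alpha|!}$ recorded in Subsection \ref{SS:uispaces}: the cross terms vanish by orthogonality of monomials, and $\sum_{|\alpha|=n}\binom{n}{\alpha}|w^\alpha|^2 = \|w\|^{2n}$ by another application of the multinomial theorem.

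There is no genuine obstacle here; the proof is essentially bookkeeping once one recognizes the graded structure. The one point that deserves care is ensuring that the homogeneous decomposition of $\cI$ really matches the orthogonal decomposition of $\cH$ used to define $\cH_\cI$, but this is immediate from the homogeneity of $\cI$ and the fact that distinct homogeneous components lie in mutually orthogonal summands of $\cH$.
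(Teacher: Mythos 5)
Your proof is correct, and at its core it is the same computation as the paper's: orthogonality of the homogeneous components together with the reproducing property. The difference is in packaging. The paper pairs a homogeneous $p \in \cI$ of degree $n$ against the actual kernel function $k(\cdot,w)\in\cH$, which only makes sense for $w\in\bB_d$, and then handles general $w\in V(\cI)$ by a scaling argument (using that $V(\cI)$ is a cone, as $\cI$ is homogeneous). You instead work entirely inside the graded piece $\cH_n$ and use only its kernel $a_n\langle z,w\rangle^n$, never invoking $k(\cdot,w)$ itself; this removes any need to restrict the size of $w$ --- provided the identity $p(w)=a_n\langle p,\langle\cdot,w\rangle^n\rangle$ is justified for all $w\in\bC^d$ and not merely for $w\in\bB_d$. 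As written, the phrase ``as an RKHS on $\bB_d$'' licenses that identity only at points of the ball, and $V(\cI)$ is generally not contained in $\ol{\bB_d}$; however, your alternative justification (direct verification on the monomial basis via $\|z^\alpha\|^2_{\cH}=\frac{1}{a_{|\alpha|}}\frac{\alpha!}{|\alpha|!}$) gives the identity for every $w\in\bC^d$, and likewise your multinomial-expansion computation of the norm is unconditional. So either invoke the monomial-basis version explicitly or scale as the paper does; with that small clarification your argument is complete, and in fact slightly cleaner than the paper's, since each degree is handled on its own and the $\|w\|<1$ restriction never arises.
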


\begin{proof}
  Since $\cH$ is unitarily invariant, homogeneous polynomials
  of different degree are orthogonal, and the homogeneous expansion of a function
  in $\cH$ converges in the norm of $\cH$. In particular, this is true
  for kernel functions, so if $p \in \cI$ is a homogeneous polynomial of degree $n$ and $w \in V(\cI)$
  with $||w|| < 1$, then
  \begin{equation*}
    0 = p(w) = \langle p, k(\cdot,w) \rangle = \sum_{m=0}^\infty \langle p, a_m \langle \cdot,w \rangle^m \rangle = a_n \langle p, \langle \cdot,w \rangle^n \rangle.
  \end{equation*}
  Since $\cI$ is homogeneous, it follows that $\langle \cdot,w \rangle^n$ is orthogonal to $\cI$ and thus belongs to $\cH_\cI$. Moreover,
  \begin{equation*}
    a_n || \langle \cdot, w \rangle^n ||^2 =
    \langle \langle \cdot,w \rangle^n, a_n \langle \cdot,w \rangle^n \rangle
    = \langle \langle \cdot,w \rangle^n, k(\cdot,w) \rangle = ||w||^{2 n}.
  \end{equation*}
  The case of general $w \in V(\cI)$ follows by scaling $w$.
\end{proof}

In Theorem \ref{T:bdryan}, we had to exclude the case where $\cH = H^2(\bD)$. To handle this exception
in the present setting, we require the following well-known observation.
\begin{remark}
  \label{rem:identification}
  If $Y \subset \bC^d$
  is a subspace of dimension $d' \le d$ and if $V: \bC^{d'} \to Y$
  is a unitary operator, then
  the composition operator
  \begin{equation*}
    U: H^2_d \big|_{Y \cap \bB_d} \to H^2_{d'}, \quad f \mapsto f \circ V,
  \end{equation*}
  is unitary, as
  \begin{equation*}
    \frac{1}{1 - \langle V z, V w \rangle_{\bC^d}} = \frac{1}{1 - \langle z,w \rangle_{\bC^{d'}}}
  \end{equation*}
  for all $z,w \in \bB_{d'}$. Moreover, the map
  \begin{equation*}
    \Mult( H^2_d \big|_{Y \cap \bB_d}) \to \Mult(H^2_{d'}), \quad 
    M_\varphi \mapsto U M_\varphi U^* = M_{\varphi \circ V},
  \end{equation*}
  is a completely isometric isomorphism which maps $A( H^2_d \big|_{Y \cap \bB_d})$
  onto $A(H^2_{d'})$. In particular, if $V$ is a one-dimensional subspace, then
  $H^2_d \big|_{V \cap \bB_d}$ can be identified with the Hardy space $H^2(\bD)$ on the unit disc
  in this way.
  \qed
\end{remark}

Because of this observation, we say that
$\cH_\cI$ \emph{is the Hardy space} if $\cH = H^2_d$ and $\cI$
is a radical homogeneous ideal with the property that
$V(\cI)$ is a one-dimensional subspace (recall that if $\cI$ is radical, then $\cH_{\cI}$
can be identified with $\cH \big|_{V(\cI) \cap \bB_d}$).

We can now generalize Theorem \ref{T:bdryan} and Corollary \ref{C:envelope}. Recall that under the conditions of Theorem \ref{T:SESideal}, every $\zeta\in V(\cI) \cap \partial \bB_d$ gives rise to a character
$\rho_\zeta$ on $C^*(A(\cH_\cI))$
of evaluation at $\zeta$ which annihilates the compact operators.

\begin{theorem}\label{T:bdry_ideal}
Let $\cH$ be a regular unitarily invariant complete Nevanlinna-Pick space on $\bB_d$ with reproducing kernel 
\[
k(z,w)=\sum_n a_n \langle z,w \rangle^n.
\]
Let $\cI \subset \bC[z_1,\ldots,z_d]$
be a proper homogeneous ideal which satisfies Arveson's conjecture. Suppose that
$\cH_\cI$ is not the Hardy space.
\begin{enumerate}[label=\normalfont{(\alph*)}]
    \item If $\sum_{n=0}^\infty a_n = \infty$,
then the boundary representations for $A(\cH_\cI)$ are precisely the characters $\rho_\zeta$ for $\zeta \in V(\cI)\cap \partial \bB_d $
      and the identity representation of $C^*(A(\cH_\cI))$.
    \item If 
      $\sum_{n=0}^\infty a_n < \infty$,
then the identity representation of $C^*(A(\cH_\cI))$
      is the only boundary representation  for $A(\cH_\cI)$.
  \end{enumerate}
  In particular, $C^*_{e}(A(\cH_\cI)) = C^*(A(\cH_\cI))$.
\end{theorem}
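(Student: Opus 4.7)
My plan is to adapt the argument for Theorem \ref{T:bdryan}, leveraging the generalizations of the relevant tools established earlier in this section. The first step is to invoke Theorem \ref{T:SESideal} together with Lemma \ref{lem:rep_split} to restrict the irreducible $*$-representations of $C^*(A(\cH_\cI))$ to the identity representation and the point evaluation characters $\rho_\zeta$ for $\zeta \in V(\cI)\cap\partial\bB_d$; these are the only candidates for boundary representations. (The degenerate case $V(\cI)\cap\partial\bB_d=\emptyset$ forces $V(\cI)=\{0\}$ and hence $\cH_\cI$ finite dimensional, so $C^*(A(\cH_\cI))=\cK(\cH_\cI)$ and the statement is automatic.)

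The second step, showing that the identity representation is always a boundary representation for $A(\cH_\cI)$, is the main obstacle. By Arveson's boundary theorem applied to the irreducible algebra $A(\cH_\cI)$ (Lemma \ref{L:irredideal}), it suffices to show that the quotient map
\[
  q: C^*(A(\cH_\cI)) \longrightarrow C(V(\cI)\cap\partial\bB_d)
\]
is not isometric on $A(\cH_\cI)$. The basic estimate is $\|p(S^\cI)\|^2 \ge \|P_{\cH_\cI}p\|^2$ for any polynomial $p$. If $\cI$ is not radical, a homogeneous $p\in\sqrt{\cI}\setminus\cI$ has $P_{\cH_\cI}p\neq 0$ (since the homogeneous components of $\cI$ are already closed in $\cH$) while vanishing on $V(\cI)$, which suffices. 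If $\cI$ is radical, I fix $w\in V(\cI)\cap\partial\bB_d$ and take $p=\langle\cdot,w\rangle^n$: by Lemma \ref{lem:kernelparts}, $p\in\cH_\cI$ with $\|p\|^2=1/a_n$, while its sup norm on $V(\cI)\cap\partial\bB_d$ is $1$; this yields strict inequality unless $a_n=1$ for every $n$, which forces $\cH=H^2_d$. In that case, since $\cH_\cI$ is not the Hardy space, Remark \ref{rem:identification} tells me that $V(\cI)$ is not a one-dimensional subspace, and I would adapt the $z_1z_2$ computation from the proof of Theorem \ref{T:bdryan} by restricting via Remark \ref{rem:identification} to an appropriate linear slice contained in $V(\cI)$. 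This is the delicate point, since $V(\cI)$ need not \emph{a priori} contain a two-dimensional linear subspace, and I would fall back on \cite[Proposition 4.9]{GHX04}, which already carries out precisely this type of comparison inside the restriction of $H^2_d$ to $V(\cI)$ and is cited by this paper for the corresponding statement in Theorem \ref{T:bdryan}.

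The third step treats the characters $\rho_\zeta$. If $\sum_n a_n=\infty$, applying Proposition \ref{P:UEPkernel_ideal} to the one-dimensional spherical unitary on $\bC$ given by scalar multiplication by $\zeta$ shows that $\rho_\zeta\big|_{A(\cH_\cI)}$ has the unique extension property, so $\rho_\zeta$ is a boundary representation. If $\sum_n a_n<\infty$, then by Lemma \ref{lem:kernelparts} the kernel function $k_\zeta=\sum_n a_n\langle\cdot,\zeta\rangle^n$ converges in $\cH_\cI$, and the vector state $a\mapsto\langle ak_\zeta,k_\zeta\rangle/\|k_\zeta\|^2$ on $C^*(A(\cH_\cI))$ agrees with $\rho_\zeta$ on $A(\cH_\cI)$ by the reproducing property; since this state is nontrivial on rank-one compact operators while $\rho_\zeta$ annihilates $\cK(\cH_\cI)$, the two extensions are distinct and $\rho_\zeta$ fails to be a boundary representation. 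Since the identity representation is a boundary representation in either part of the theorem, the $C^*$-envelope conclusion $C^*_e(A(\cH_\cI))=C^*(A(\cH_\cI))$ follows immediately from the discussion in Subsection \ref{S:prelimuep}.
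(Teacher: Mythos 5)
Your overall architecture matches the paper's: reduce the candidate irreducible representations via Theorem \ref{T:SESideal} and Lemma \ref{lem:rep_split}, handle the characters with Proposition \ref{P:UEPkernel_ideal} when $\sum_n a_n = \infty$ and with the two-extensions argument via $k_\zeta$ when $\sum_n a_n < \infty$, and reduce the identity representation to non-isometry of the quotient map via Arveson's boundary theorem and Lemma \ref{L:irredideal}; your treatment of the non-radical case is also sound. The genuine gap is exactly at the point you yourself flag as delicate: once you have forced $\cH = H^2_d$ and $\cI$ radical, you have no argument. The variety $V(\cI)$ is in general a non-linear homogeneous cone, so there need not be any linear slice of dimension at least $2$ inside $V(\cI)$ to which Remark \ref{rem:identification} and the $z_1z_2$ computation could be applied, and falling back on \cite[Proposition 4.9]{GHX04} does not close this: that result is invoked in the paper only as an alternative route to Theorem \ref{T:bdryan}, i.e.\ for the full space $\cI = \{0\}$, and it does not treat quotient modules $H^2_d \ominus \cI$ or restrictions of $H^2_d$ to arbitrary homogeneous varieties. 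The paper supplies precisely the missing step: assuming the quotient map is isometric on $A(\cH_\cI)$, one proves that $V(\cI)$ must be a \emph{linear subspace}. The mechanism is Lemma \ref{lem:kernelparts} again: $\langle \cdot,w\rangle \in H^2_d \ominus \cI$ for every $w$ in the linear span of $V(\cI)$, so for $w \in \operatorname{span}(V(\cI)) \cap \partial\bB_d$ one has $\|S_w\| \ge \|S_w 1\| = \|\langle\cdot,w\rangle\|_{H^2_d} = 1$; isometry then forces $\max\{|\langle z,w\rangle| : z \in V(\cI)\cap\partial\bB_d\} = 1$, and Cauchy--Schwarz together with homogeneity of $\cI$ gives $w \in V(\cI)$. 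Hence $\operatorname{span}(V(\cI)) \subset V(\cI)$, so $V(\cI)$ is a subspace, Remark \ref{rem:identification} identifies $A(\cH_\cI)$ with $A(H^2_{d'})$, and the $z_1z_2$ computation excludes $d' \ge 2$, leaving only the excluded Hardy-space case. Without an argument of this kind your proof does not cover, for example, radical ideals whose variety is a non-linear curve or hypersurface.

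A smaller, repairable point: in the radical case you use only the one-sided estimate $\|p(S^{\cI})\| \ge \|P_{\cH_\cI}p\|$ with $p = \langle\cdot,w\rangle^n$, which under the isometry assumption yields $a_n \ge 1$, not $a_n = 1$; the assertion ``strict inequality unless $a_n = 1$'' needs either the exact norm $\|S_w^n\|^2 = 1/a_n$ (the paper cites \cite[Proposition 6.4]{hartz2015isom} together with Lemma \ref{lem:kernelparts}) or the observation that the complete Nevanlinna--Pick hypothesis already forces $a_n \le 1$, via $b_n \ge 0$, $\sum_n b_n \le 1$ and the recursion $a_m = \sum_{n=1}^m b_n a_{m-n}$. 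Unlike the subspace step above, this is easy to fix.
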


\begin{proof}
  By Theorem \ref{T:SESideal} and Lemma \ref{lem:rep_split}, the only possible irreducible
  representations of $C^*(A(\cH_\cI))$ are the identity
  representation and the characters corresponding to points
  in $V(\cI)\cap \partial \bB_d $ (note that this set may be empty). Proposition \ref{P:UEPkernel_ideal}
  shows that these characters are boundary representations if $\sum_{n=0}^\infty a_n = \infty$.
  Conversely, suppose that $\sum_{n=0}^\infty a_n < \infty$.
  As remarked in Subsection \ref{SS:an}, $\cH$ can be identified
  with a reproducing kernel Hilbert space on $\ol{\bB_d}$ in this case.
  Moreover, if $\zeta \in V(\cI) \cap \partial \bB_d$, then $k_{\zeta} = k(\cdot,\zeta) \in \cH_{\cI}$ by
  the reproducing property of the kernel. We see as in the proof
  of Theorem \ref{T:bdryan} that the character $\rho_\zeta \big|_{A(\cH_{\cI})}$ admits
  two extensions to a state on $C^*(A(\cH_\cI))$, namely $\rho_\zeta$ itself
  and the vector state associated to $k_{\zeta} / ||k_\zeta||$. These clearly
  differ, so the characters
  are not boundary representations in this case.
  
  It remains to prove that the identity representation is always a boundary representation,
  for which we will use Arveson's boundary theorem
  \cite[Theorem 2.1.1]{arveson1972}.
  By virtue of Theorem \ref{T:SESideal}, we have to show that the surjective $*$-homomorphism 
  \[
  \pi:C^*(A(\cH_\cI)) \to C(V(\cI)\cap \partial \bB_d), \quad
  S_j\mapsto z_j, \quad  (1\leq j \leq d)
  \]
 is not completely isometric on $A(\cH_\cI)$. In fact, we will show that if $\pi$ is
  isometric on $A(\cH_\cI)$, then $\cH_\cI$ is the Hardy space, contrary to our assumption.

  First of all, if $\pi$ is isometric on $A(\cH_\cI)$, then that algebra has no non-zero
  nilpotent elements, which is easily seen to imply that $\cI$ is a radical ideal.
  If $\cI = \langle z_1,\ldots,z_d \rangle$, then $\cH_\cI$ is one-dimensional so that $C^*(A(\cH_\cI))$ consists of compact operators
  and the quotient map $\pi$ is the zero map, which is clearly not isometric on $A(\cH_\cI)$.
  Hence $\cI \subsetneq \langle z_1,\ldots,z_d \rangle$, so that
  $V(\cI) \cap \partial \bB_d$ is not empty by Hilbert's Nullstellensatz.
  Choose thus $w \in V(\cI) \cap \partial \bB_d$ and let
  \[
  S_{w} =\sum_{j=1}^d \ol{w_j} S_j = P_{\cH_{\cI}} M_{\langle \cdot,w \rangle}
  \big|_{P_{\cH_{\cI}}}.
  \]
  It follows from
  \cite[Proposition 6.4]{hartz2015isom} and Lemma \ref{lem:kernelparts} that
  $||S_w^n||^2 = 1/a_n$ for all $n \in \bN$, whereas
  \begin{equation*}
    \max \{ |\langle z, w  \rangle^n| : z \in V(\cI) \cap \partial \bB_d \} = 1.
  \end{equation*}
  Since $\pi$ is an isometry we infer that $a_n = 1$ for all $n \in \bN$, and hence $\cH = H^2_d$.

  Next,
  we show that $V(\cI)$ is a subspace of $\bC^d$. An application of Lemma \ref{lem:kernelparts}
  shows that $\langle \cdot,w \rangle \in H^2_d \ominus \cI$
  for all $w \in V(\cI)$, hence for all $w$ in the linear
  span of $V(\cI)$. Consequently, for $w \in \operatorname{span} (V(\cI)) \cap \partial \bB_d$,
  we obtain the lower bound
  \begin{equation*}
    ||S_w||_{A(H^2_d \ominus \cI)} \ge ||S_w 1||_{H^2_d \ominus \cI} =
    || \langle \cdot,w \rangle||_{H^2_d} = 1.
  \end{equation*}
  Again, since $\pi$ is assumed to be
  isometric, this forces
  \begin{equation*}
    \max \{ |\langle z, w \rangle| : z \in V(\cI) \cap \partial \bB_d \} = 1,
  \end{equation*}
  which implies by the Cauchy-Schwarz inequality and
  homogeneity of $\cI$ that $w \in V(\cI)$. Using homogeneity of $\cI$ again,
  we see that the linear span of $V(\cI)$ is contained in $V(\cI)$,
  so that $V(\cI)$ is a subspace.  Since $\cI$ is radical, we may use Remark \ref{rem:identification} to conclude that $\cH_\cI$ is
  identified with $H^2_{d'}$ and that $A(\cH_\cI)$ is completely isometrically isomorphic to $A(H^2_{d'})$, where $d' = \dim(V(\cI))$. If $d' \ge 2$, it follows as in the proof of Theorem \ref{T:bdryan} that $\pi$ is not isometric, so that $V(\cI)$
  is a one-dimensional subspace of $\bC^d$, as asserted.
\end{proof}

Finally, we obtain a generalization of Theorem \ref{T:hyperrigid}.
We say that a homogeneous ideal $\cI \subset \bC[z_1,\ldots,z_d]$
is \emph{relevant} if its radical does not contain the maximal ideal
$\langle z_1,\ldots,z_d \rangle$ (see \cite[Chapter VII]{ZS75}). By Hilbert's Nullstellensatz,
$\cI$ is relevant if and only if $V(\cI) \cap \partial \bB_d$ is not empty.

\begin{theorem}
  \label{T:hyperrigid_ideal}
  Let $\cH$ be a regular unitarily invariant complete Nevanlinna-Pick space on $\bB_d$ with reproducing kernel
  \begin{equation*}
    k(z,w) = \sum_{n=0}^\infty a_n \langle z,w \rangle^n.
  \end{equation*}
  Let $\cI \subset \bC[z_1,\ldots,z_d]$
  be a relevant homogeneous ideal which satisfies Arveson's conjecture.
  Suppose that $\cH_\cI$ is not the Hardy space.
  \begin{enumerate}[label=\normalfont{(\alph*)}]
    \item If $\sum_{n=0}^\infty a_n = \infty$, then $A(\cH_{\cI})$ is hyperrigid.
    \item If $\sum_{n=0}^\infty a_n < \infty$, then $A(\cH_{\cI})$ is not hyperrigid.
  \end{enumerate}
\end{theorem}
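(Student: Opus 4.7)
The proof will closely parallel that of Theorem \ref{T:hyperrigid}, with Theorem \ref{T:SESideal} playing the role of Theorem \ref{T:essentiallynormalan} and Proposition \ref{P:UEPkernel_ideal} playing the role of Proposition \ref{P:UEP_bigspace}. The relevance of $\cI$ will enter in part (b) to guarantee that the evaluation characters actually exist, while Arveson's conjecture is needed throughout in order to invoke Theorem \ref{T:SESideal}, and the hypothesis that $\cH_\cI$ is not the Hardy space is needed for the application of Theorem \ref{T:bdry_ideal}(a).

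For part (a), I would start by invoking Theorem \ref{T:SESideal} together with Lemma \ref{lem:rep_split} to decompose any unital $*$-representation $\pi$ of $C^*(A(\cH_\cI))$ as $\pi = \pi_1 \oplus \pi_2$, where $\pi_1$ is unitarily equivalent to a multiple of the identity representation and $\pi_2$ annihilates the ideal of compact operators. Since direct sums of representations with the unique extension property have the unique extension property \cite[Proposition 4.4]{Arveson11}, it then suffices to show that $\pi_j|_{A(\cH_\cI)}$ has the unique extension property for $j = 1, 2$. The case $j = 1$ is immediate from Theorem \ref{T:bdry_ideal}(a). For $j = 2$, I would apply Theorem \ref{T:SESideal} to argue that $\pi_2$ factors through the quotient $C(V(\cI) \cap \partial \bB_d)$, so that $(\pi_2(S_1), \ldots, \pi_2(S_d))$ is a commuting tuple of normal operators whose joint spectrum lies in $V(\cI) \cap \partial \bB_d \subset \partial \bB_d$, hence a spherical unitary with joint spectrum contained in $V(\cI) \cap \partial \bB_d$. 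Proposition \ref{P:UEPkernel_ideal} then yields the desired unique extension property of $\pi_2|_{A(\cH_\cI)}$.

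Part (b) is almost immediate. Since $\cI$ is relevant, Hilbert's Nullstellensatz ensures that $V(\cI) \cap \partial \bB_d$ is non-empty, so we may pick $\zeta \in V(\cI) \cap \partial \bB_d$ and consider the character $\rho_\zeta$ of $C^*(A(\cH_\cI))$. By Theorem \ref{T:bdry_ideal}(b), the restriction $\rho_\zeta|_{A(\cH_\cI)}$ does not have the unique extension property, and therefore $A(\cH_\cI)$ is not hyperrigid.

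I do not anticipate any significant obstacle: the two key inputs — the short exact sequence from Theorem \ref{T:SESideal} and the unique extension property for spherical unitary representations from Proposition \ref{P:UEPkernel_ideal} — have already been set up, so the argument reduces to assembling these pieces in the same way as in the proof of Theorem \ref{T:hyperrigid}. If anything, the only subtle point is verifying that $\pi_2$ really does send the tuple $S$ to a spherical unitary with spectrum in $V(\cI) \cap \partial \bB_d$, which is a direct consequence of the explicit description of the quotient map in Theorem \ref{T:SESideal}.
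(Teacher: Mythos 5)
Your proposal is correct and follows essentially the same route as the paper: decompose $\pi = \pi_1 \oplus \pi_2$ via Theorem \ref{T:SESideal} and Lemma \ref{lem:rep_split}, handle $\pi_1$ by the boundary representation statement of Theorem \ref{T:bdry_ideal} together with the direct-sum stability of the unique extension property, handle $\pi_2$ via the spherical unitary with joint spectrum in $V(\cI)\cap\partial\bB_d$ and Proposition \ref{P:UEPkernel_ideal}, and use relevance plus Theorem \ref{T:bdry_ideal}(b) for part (b). This matches the paper's argument in all essentials.
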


\begin{proof}
  (a) The proof of Theorem \ref{T:hyperrigid} carries over to this setting. Indeed,
  Theorem \ref{T:SESideal} and Lemma \ref{lem:rep_split} show
  that every representation $\pi$ of $C^*(A(\cH_\cI))$ splits as
  a direct sum of $\pi_1 \oplus \pi_2$, where $\pi_1$ is unitarily equivalent
  to a multiple of the identity representation and $\pi_2$ annihilates
  the compact operators. The identity representation is a boundary
  representation by Theorem \ref{T:bdry_ideal}, so it remains
  to show that $\pi_2 \big|_{A(\cH_\cI)}$ has the unique extension property.
  Using Theorem \ref{T:SESideal} again, we see that $U = (\pi_2(S_1),\ldots,\pi_2(S_d))$
  is a spherical unitary whose joint spectrum is contained in $V(\cI) \cap \partial \bB_d$,
  so the assertion follows from Proposition \ref{P:UEPkernel_ideal}.

  (b) Since $\cI$ is relevant, the set $V(\cI) \cap \partial \bB_d$ is not empty, hence part (b) of Theorem \ref{T:bdry_ideal}
  shows that the point evaluation characters $\rho_\zeta \big|_{A(\cH_\cI)}$ with $ \zeta \in V(\cI) \cap \partial \bB_d$  do not have
  the unique extension property.
\end{proof}

We remark that if $\cI \subset \bC[z_1,\ldots,z_d]$ is a proper homogeneous ideal which is not
  relevant, then $\cI$ contains a power of the maximal ideal
  $\langle z_1,\ldots,z_d \rangle$, hence $\cH_\cI$ is finite dimensional. Lemma \ref{L:irredideal}
  therefore shows that $C^*(A(\cH_\cI)) = \cK(\cH_\cI) = B(\cH_\cI)$
  in this case, so $A(\cH_\cI)$ is always hyperrigid (for instance by Arveson's boundary theorem),
  regardless of the properties of the sequence $\{a_n\}_n$.

As in Section \ref{S:AH}, we see that Theorems \ref{T:bdry_ideal}
and  \ref{T:hyperrigid_ideal} combined show that the algebras $A(\cH_\cI)$ support
Arveson's hyperrigidity conjecture, provided that $\cI$ satisfies Arveson's essential normality
conjecture. If $\cH = H^2_d$, this is proved in \cite{KS2015} (see Proposition 4.11 and Theorem 4.12
in that paper). While the authors also extend their results to other unitarily invariant spaces  \cite[Section 5]{KS2015}, their setting is rather different from ours and there is thus very little overlap with our present work.
For instance, this can be seen by comparing our standard examples of Subsection \ref{ss:exa} to their examples
in \cite[Section 5.2]{KS2015}.

More precisely, while they do not assume that their spaces have the complete Nevanlinna-Pick property,
they require that the tuple $(M_{z_1},\ldots,M_{z_d})$ is a row contraction.
It is not hard to see that the Drury-Arveson space is the only unitarily invariant
complete Nevanlinna-Pick space on $\bB_d$ for which this happens.
Indeed, if $||M_{z_1}|| \le 1$, then $a_1 \ge 1$ (for instance by Lemma \ref{lem:kernelparts}).
On the other hand, if $\cH$ is a unitarily invariant complete Nevanlinna-Pick space, then by Lemma \ref{L:NP-char} we have
\[
1-\frac{1}{k(z,w)}=\sum_{n=1}^\infty b_n \langle z,w\rangle^n
\]
with $b_n \ge 0$ for all $n \in \bN$ and $\sum_{n=1}^\infty b_n \le 1$.
It is easily verified that $a_1 = b_1$, which forces $b_1 = 1$ and $b_n = 0$ for $n \ge 2$, so that $\cH$
is the Drury-Arveson space. Thus our setting and that of \cite{KS2015} are almost complementary.

\section{Necessity of the Nevanlinna-Pick property for coextensions}
\label{sec:NP_nec}
We saw in Corollary \ref{cor:automatic_coextension} that if
$\cH$ is a unitarily invariant complete Nevanlinna-Pick space on the unit ball, then every
unital completely contractive representation of $A(\cH)$ admits a $C^*(A(\cH))$-coextension,
and we generalized this result to the algebras $A(\cH_\cI)$ in Theorem \ref{T:rep_A_I}. We also
observed in Example \ref{E:Hardy_Bergman} that this automatic coextension property fails for
the Bergman space on the unit disc. In this section, we will show that under some mild
additional assumptions, the automatic coextension property in fact characterizes those unitarily
invariant spaces which enjoy the complete Nevanlinna-Pick property.

Throughout this section, $\cH$ denotes a unitarily invariant space on $\bB_d$ whose multiplier algebra contains
the polynomials. Recall then that $A(\cH)$ is the norm closure of the polynomial multipliers. Let
\begin{equation*}
  k(z,w) = \sum_{n=0}^\infty a_n \langle z,w \rangle^n
\end{equation*}
be the reproducing kernel of $\cH$,
with $a_0 = 1$ and $a_n > 0$ for all $n \in \bN$. Then, there is a unique sequence $\{b_n\}_{n}$ of real numbers such that
\begin{equation}
  \label{eqn:a_b}
  1- \frac{1}{\sum_{n=0}^\infty a_n t^n} = \sum_{n=1}^\infty b_n t^n
\end{equation}
holds for all $t$ in a neighbourhood of $0$.

We first examine the case where $1/k$ is a polynomial, so that all but finitely many of the numbers $b_n$ are equal to $0$.
In this case, we can use the main idea of the proof of Theorem \ref{T:NPchar} to show
that the complete Nevanlinna-Pick property is necessary for Corollary \ref{cor:automatic_coextension} to hold.

\begin{proposition}
  Let $\cH$ be a unitarily invariant space on the unit ball such that the polynomials
  are multipliers. Let $k$ be the reproducing kernel of $\cH$ and assume that $1/k$ is a polynomial. If every
  unital completely contractive representation of $A(\cH)$ admits a $C^*(A(\cH))$-coextension,
  then $\cH$ is an irreducible complete Nevanlinna-Pick space.
\end{proposition}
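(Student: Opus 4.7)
The plan is to adapt the argument of Theorem \ref{T:NPchar}, with the automatic coextension hypothesis playing the role of the complete Nevanlinna-Pick property. The key ingredient is the identity
\begin{equation*}
1/k(M_z^\cH, M_z^{\cH *}) = P_\bC,
\end{equation*}
where $P_\bC$ denotes the rank-one projection of $\cH$ onto the constant functions. Since $1/k$ is a polynomial, the left-hand side lies in $C^*(A(\cH))$, and the identity is obtained from the direct computation
\begin{equation*}
1/k(M_z^\cH, M_z^{\cH *}) k(\cdot,w) = k(\cdot,w)\Bigl(1 - \sum_n b_n \langle z,w\rangle^n\Bigr) = 1
\end{equation*}
on kernel functions, together with density of their linear span in $\cH$. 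In particular, $1/k(M_z^\cH, M_z^{\cH *})$ is a positive element of $C^*(A(\cH))$.

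For each $0 < \varepsilon \le 1$, I would introduce the reproducing kernel Hilbert space $\cH_\varepsilon$ with kernel $k_\varepsilon = k - (1 - \varepsilon)$. Since $k_\varepsilon(z,z) = \varepsilon + \sum_{n \ge 1} a_n |z|^{2 n} > 0$, this space has no common zeros. Mirroring the proof of Theorem \ref{T:NPchar}, the inclusion $\Mult(\cH) \hookrightarrow \Mult(\cH_\varepsilon)$ will be shown to be completely contractive via the decomposition $k_\varepsilon = \varepsilon + (k - 1)$: given a contractive matrix-valued multiplier $\Phi$ on $\cH$, the product $\varepsilon(I - \Phi(z)\Phi(w)^*)$ is positive semi-definite by pointwise contractivity of $\Phi$, while $(k(z,w) - 1)(I - \Phi(z)\Phi(w)^*)$ is positive semi-definite because $\cH \ominus \bC$ is $M_\Phi$-invariant, and their sum is $k_\varepsilon(I - \Phi\Phi^*)$. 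Restricting the inclusion to polynomial closures produces a unital completely contractive representation $\rho: A(\cH) \to B(\cH_\varepsilon)$ with $\rho(M_{z_j}^\cH) = M_{z_j}^{\cH_\varepsilon}$.

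By hypothesis, $\rho$ admits a $C^*(A(\cH))$-coextension $\pi: C^*(A(\cH)) \to B(\cL)$, so that $\cH_\varepsilon$ is invariant for each $\pi(M_{z_j}^\cH)^*$. Combined with the polynomial nature of $1/k$, a routine computation then yields
\begin{equation*}
1/k(M_z^{\cH_\varepsilon}, M_z^{\cH_\varepsilon *}) = P_{\cH_\varepsilon}\,\pi\bigl(1/k(M_z^\cH, M_z^{\cH *})\bigr)\Big|_{\cH_\varepsilon} = P_{\cH_\varepsilon}\,\pi(P_\bC)\Big|_{\cH_\varepsilon} \ge 0.
\end{equation*}
Testing this operator inequality against kernel functions of $\cH_\varepsilon$ and using that $M_{z_j}^{\cH_\varepsilon *} k_\varepsilon(\cdot,w) = \bar{w}_j k_\varepsilon(\cdot,w)$ leads to
\begin{equation*}
\langle 1/k(M_z^{\cH_\varepsilon}, M_z^{\cH_\varepsilon *}) k_\varepsilon(\cdot,w), k_\varepsilon(\cdot,v)\rangle_{\cH_\varepsilon} = \frac{k_\varepsilon(v,w)}{k(v,w)},
\end{equation*}
so the quotient kernel $k_\varepsilon/k$ is positive semi-definite on $\bB_d$. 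Sending $\varepsilon \to 0^+$ gives positive semi-definiteness of $1 - 1/k = \sum_n b_n \langle z,w\rangle^n$, whence the standard fact that positive semi-definite unitarily invariant kernels on the ball have non-negative coefficients in their $\langle z,w\rangle$-expansion forces $b_n \ge 0$ for every $n$. By Lemma \ref{L:NP-char}, $\cH$ is then a complete Nevanlinna-Pick space, and irreducibility follows routinely from $\sum b_n \le 1$, $|\langle z,w\rangle| < 1$ on $\bB_d$, and the normalization $k(\cdot,0) = 1$. The main obstacle is executing the descent step cleanly: realizing $1/k(T,T^*)$ for $T = M_z^{\cH_\varepsilon}$ as a compression of $\pi(1/k(M_z^\cH, M_z^{\cH *}))$ relies essentially on $1/k$ being a polynomial, and would fail for a general power series.
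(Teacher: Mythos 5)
Your overall architecture is workable and genuinely different from the paper's: the paper does not perturb the kernel at all, but simply restricts to the invariant subspace $\cH_0=\{f\in\cH: f(0)=0\}$, notes that this restriction is a unital completely contractive representation of $A(\cH)$, applies the coextension hypothesis, deduces $1/k(T,T^*)\ge 0$ for the compression $T=M_z|_{\cH_0}$ (the sum being finite because $1/k$ is a polynomial), and then tests this inequality against the kernel functions $k_0(\cdot,w)=k(\cdot,w)-1$ to get that $1-1/k=k_0/k$ is positive semi-definite directly, with no limiting argument. Your $\varepsilon$-detour through $\cH_\varepsilon$, the coinvariance/compression step, the kernel-function computation giving $k_\varepsilon/k\ge 0$, the limit $\varepsilon\to 0$, and the appeal to Lemma \ref{L:NP-char} are all fine in principle (and the identity $1/k(M_z,M_z^*)=$ projection onto the constants is correct and a nice way to see positivity). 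The detour is unnecessary, however, precisely because the hypothesis here concerns arbitrary representations of $A(\cH)$, not inclusions of multiplier algebras as in Theorem \ref{T:NPchar}; this is why the paper can use the common-zero space $\cH_0$ itself.

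There is, however, a genuine error in the one step you did spell out: the claim that $\varepsilon\bigl(I-\Phi(z)\Phi(w)^*\bigr)$ is positive semi-definite "by pointwise contractivity of $\Phi$'' is false. Positive semi-definiteness of the kernel $(z,w)\mapsto I-\Phi(z)\Phi(w)^*$ is much stronger than $\|\Phi(z)\|\le 1$ for every $z$; it would say that $\Phi$ multiplies the space with constant kernel $1$ contractively, which fails for any non-constant contractive multiplier vanishing at a point. Concretely, take the scalar multiplier $\phi=z_1/\|M_{z_1}\|$ and the points $0$ and $re_1$: the matrix $\bigl[1-\phi(x_i)\ol{\phi(x_j)}\bigr]_{i,j=1}^2$ has determinant $-|\phi(re_1)|^2<0$. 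So the decomposition $k_\varepsilon=\varepsilon\cdot 1+(k-1)$ does not do the job. The repair is exactly the convex combination used in the paper's proof of Theorem \ref{T:NPchar}: write $k_\varepsilon=\varepsilon k+(1-\varepsilon)k_0$ with $k_0=k-1$, so that
\begin{equation*}
k_\varepsilon(z,w)\bigl(I-\Phi(z)\Phi(w)^*\bigr)=\varepsilon\, k(z,w)\bigl(I-\Phi(z)\Phi(w)^*\bigr)+(1-\varepsilon)\,k_0(z,w)\bigl(I-\Phi(z)\Phi(w)^*\bigr),
\end{equation*}
where the first summand is positive semi-definite because $\Phi$ is a contractive multiplier of $\cH$ and the second because $\cH_0^{(r)}$ is invariant for $M_\Phi$. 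With that substitution your argument closes; without it, the completely contractive inclusion $\Mult(\cH)\hookrightarrow\Mult(\cH_\varepsilon)$, on which everything else rests, is not established.
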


\begin{proof}
  Let $\cH_0$ be the closed subspace of all functions in $\cH$ which vanish at $0$. Clearly,
  $\cH_0$ is invariant for all multiplication operators, hence the restriction map
  \begin{equation*}
    A(\cH) \to B(\cH_0), \quad M_\varphi \mapsto M_\varphi \big|_{\cH_0},
  \end{equation*}
  is a unital completely contractive representation of $A(\cH)$.
  By assumption, this representation admits a $C^*(A(\cH))$-coextension $\pi$.

  Given a commuting tuple of operators $T = (T_1,\ldots,T_d)$ on a Hilbert space,
  define
  \begin{equation*}
    1/k(T,T^*) = I -
    \sum_{n=1}^\infty b_n \sum_{|\alpha| = n} \binom{n}{\alpha} T^\alpha (T^*)^\alpha.
  \end{equation*}
  Observe that since $1/k$ is polynomial, the sum is in fact finite. Moreover, Equation
  \eqref{eqn:a_b} holds for all $t \in \bD$, so that
  arguing as in the proof of Lemma \ref{L:1/k},
  we see that $1/k(M_z,M_z^*) \ge 0$, hence $1/k(\pi(M_z),\pi(M_z)^*) \ge 0$.
  Let $T = M_z \big|_{\cH_0}$. Since $T$ is
  the compression of $\pi(M_{z})$ to the
  coinvariant subspace $\cH_0$, it follows that $1/k(T,T^*) \ge 0$.

  Let $k_0 = k -1$ which is the reproducing kernel of $\cH_0$. Then 
  \[
  T_j^* k_0(\cdot,w) = \ol{w_j} k_0(\cdot,w) \quad (j=1,\ldots, d).
  \]
  Applying the inequality $1/k(T,T^*) \ge 0$ to arbitrary finite linear combinations of the kernel functions $k_0(\cdot,w), w\in \bB_d$,
  we see that the function
  \begin{equation*}
    1 - \frac{1}{k} = \frac{k_0}{k}
  \end{equation*}
  is positive semi-definite on $\bB_d$. It is well known that this implies that $b_n\geq 0$ for each $n\geq 1$ (see for instance \cite[Corollary 6.3]{hartz2015isom})   so that $\cH$ is an irreducible complete Nevanlinna-Pick space by Lemma \ref{L:NP-char}.
  \end{proof}

In the preceding proof, the observation that $1/k(M_z,M_z^*) \ge 0$ played
a crucial role.
If $1/k$ is not a polynomial, then it is not as easy to define $1/k(M_z,M_z^*)$.
To overcome this difficulty, we instead consider compressions $S^{\cI}$ of $M_z$
to the orthogonal complement of homogeneous ideals $\cI$ of finite codimension. These compressions are nilpotent,
so that the definition of $1/k(S^{\cI},(S^\cI)^*)$ is once again straightforward.

To be precise, suppose that $T = (T_1,\ldots,T_d)$ is a commuting tuple of nilpotent operators. Then we define
\begin{equation*}
  1/k(T,T^*) = I 
   - \sum_{n=1}^\infty b_n \sum_{|\alpha|=n} \binom{n}{\alpha} T^\alpha (T^*)^\alpha.
\end{equation*}
Observe that since each $T_j$ is nilpotent, the sum is in fact finite. The next
lemma is the main technical tool of this section. 

\begin{lemma}
  \label{lem:1/k_coext_NP_char}
  Let $N \in \bN$, let
  \begin{equation*}
    \cI = \langle z_1,\ldots,z_d \rangle^{N+1} \subset \bC[z_1,\ldots,z_d]
  \end{equation*}
  and let 
  \begin{equation*}
    (\cH_\cI)_0 = \{ f \in \cH_\cI: f(0) = 0 \}.
  \end{equation*}
  Put $S=S^{\cI}$ and $T = S \big|_{(\cH_{I})_0}$. Then $\cH_\cI$ consists of all polynomials
  of degree at most $N$. Moreover,
  \begin{enumerate}[label=\normalfont{(\alph*)}]
    \item $1/k(S,S^*) \ge 0$, and
    \item $1/k(T,T^*) p = \frac{b_m}{a_m} p$ for every homogeneous polynomial $p$ of degree $1 \le m \le N$.
  \end{enumerate}
\end{lemma}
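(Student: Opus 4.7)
The first step is to identify $\cH_{\cI}$ concretely. Since $\cH$ is unitarily invariant, the monomials $\{z^\beta\}$ form an orthogonal basis of $\cH$. The ideal $\cI = \langle z_1,\ldots,z_d \rangle^{N+1}$ is the linear span of $\{z^\beta : |\beta| \geq N+1\}$, so its closure in $\cH$ is the closed linear span of this collection. Taking orthogonal complements shows that $\cH_{\cI}$ is the (finite-dimensional) span of $\{z^\beta : |\beta| \leq N\}$, that is, the space of all polynomials of degree at most $N$.

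The heart of the argument is the following combinatorial identity: for each $n \geq 1$ and each homogeneous polynomial $p \in \cH$ of degree $m$, the operator $T_n := \sum_{|\alpha|=n} \binom{n}{\alpha} M_z^\alpha (M_z^*)^\alpha$ satisfies
\[
T_n p = \begin{cases} (a_{m-n}/a_m)\, p & \text{if } n \leq m, \\ 0 & \text{if } n > m. \end{cases}
\]
I would establish this by a direct calculation on a monomial $z^\beta$. Iterating the formula $M_{z_j}^* z^\beta = \tfrac{a_{|\beta|-1}}{a_{|\beta|}} \cdot \tfrac{\beta_j}{|\beta|} \, z^{\beta - e_j}$ (valid in any unitarily invariant space) yields an explicit expression for $(M_z^*)^\alpha z^\beta$, after which summing against the multinomial weights $\binom{n}{\alpha}$ telescopes via the Vandermonde-type identity $\sum_{|\alpha|=n,\,\alpha \leq \beta} \binom{\beta}{\alpha} = \binom{|\beta|}{n}$. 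Executing this combinatorial bookkeeping is the main obstacle; once it is in place, everything else is algebraic.

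For (a), the fact that $\cH_{\cI}$ is coinvariant under each $M_{z_j}$ gives $S^\alpha (S^*)^\alpha = P_{\cH_{\cI}} M_z^\alpha (M_z^*)^\alpha |_{\cH_{\cI}}$, and on polynomials of degree at most $N$ the projection is trivial. Combined with the key identity,
\[
1/k(S,S^*) z^\beta = \Big( 1 - \sum_{n=1}^{m} b_n \, a_{m-n}/a_m \Big) z^\beta
\]
for $|\beta| = m \leq N$. The defining relation $k \cdot (1 - 1/k) = k - 1$ produces the recurrence $a_m = \sum_{n=1}^{m} b_n a_{m-n}$ for $m \geq 1$, so the bracket vanishes for $1 \leq m \leq N$ and equals $1$ for $m = 0$. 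Thus $1/k(S,S^*)$ is the orthogonal projection of $\cH_{\cI}$ onto the constants, which is positive.

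For (b), the subspace $(\cH_{\cI})_0$ is invariant under each $S_j$ (since $z_j f$ vanishes at $0$ whenever $f$ does), so $T^\alpha = S^\alpha |_{(\cH_{\cI})_0}$ for every $\alpha$. On the other hand, $(T^*)^\alpha z^\beta$ agrees with $(M_z^*)^\alpha z^\beta$ only when $|\alpha| \leq m - 1$; if $|\alpha| \geq m$, then the iterated lowering reaches the constants and is killed by $P_{(\cH_{\cI})_0}$. Therefore only the terms with $n \leq m - 1$ survive in the defining sum of $1/k(T,T^*)$, and the same recurrence now yields $a_m - \sum_{n=1}^{m-1} b_n a_{m-n} = b_m$. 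Hence $1/k(T,T^*) z^\beta = (b_m/a_m) z^\beta$ for every homogeneous $z^\beta$ of degree $1 \leq m \leq N$, completing the proof.
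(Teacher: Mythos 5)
Your proof is correct and follows essentially the same route as the paper: identify $\cH_\cI$ with the polynomials of degree at most $N$, establish the identity $\sum_{|\alpha|=n}\binom{n}{\alpha}M_z^\alpha (M_z^*)^\alpha p = (a_{m-n}/a_m)\,p$ for homogeneous $p$ of degree $m \ge n$ (with vanishing for $n>m$), combine it with the recursion $\sum_{n=1}^m b_n a_{m-n} = a_m$ coming from $k\,(1-1/k) = k-1$, and then use coinvariance of $\cH_\cI$, respectively invariance of $(\cH_\cI)_0$ under $S$, to compute $1/k(S,S^*)$ and $1/k(T,T^*)$ degree by degree. The only difference is in how the key identity is verified: the paper tests it on the functions $\langle z,w\rangle^m$, which span the homogeneous polynomials of degree $m$ and satisfy $M_{z_j}^*\langle z,w\rangle^m = \frac{a_{m-1}}{a_m}\ol{w_j}\langle z,w\rangle^{m-1}$, thereby avoiding the monomial-plus-Vandermonde bookkeeping you outline (which does, however, close correctly: the multinomial sums collapse exactly as you indicate).
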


\begin{proof}
  Since homogeneous polynomials of different degree are orthogonal, it follows that
  $\cH_\cI$ consists of all polynomials of degree at most $N$.
  We first show that if $n \ge 1$ and if $p$ is a homogeneous polynomial of degree $m \ge n$, then
  \begin{equation}
    \label{eqn:technical_1}
    \sum_{|\alpha| = n} \binom{n}{\alpha} M_z^\alpha (M_{z}^*)^\alpha p  = \frac{a_{m-n}}{a_m} p.
  \end{equation}
  To this end, recall that the space of homogeneous polynomials of degree $m$
  is spanned by the polynomials
  $\langle z,w \rangle^m \in \bC[z_1,\ldots,z_d]$, where $w \in \bB_d$. Moreover,
  using the fact that $M_{z_j}^*$ maps homogeneous polynomials of degree $m$ to homogeneous
  polynomials of degree $m-1$ and that $M_{z_j}^* k(\cdot,w) = \ol{w_j} k(\cdot,w)$, one readily
  sees that
  \begin{equation*}
    M_{z_j}^* \langle z,w \rangle^m = \frac{a_{m-1}}{a_m} \ol{w_j} \langle z,w \rangle^{m-1}
  \end{equation*}
  for each $j=1,\ldots, d$.
  Consequently, if $|\alpha|=n$, then
  \begin{equation*}
    (M_{z}^\alpha)^* \langle z , w \rangle^m = \frac{a_{m-n}}{a_m} \ol{w}^\alpha \langle z,w  \rangle^{m-n},
  \end{equation*}
  so that
  \begin{equation*}
    \sum_{|\alpha| = n} \binom{n}{\alpha} M_z^\alpha (M_{z}^*)^\alpha \langle z,w \rangle^{m}
    = \frac{a_{m-n}}{a_m}
    \sum_{|\alpha| = n} \binom{n}{\alpha} z^\alpha \ol{w}^\alpha \langle z,w \rangle^{m-n}
    = \frac{a_{m-n}}{a_m} \langle z,w \rangle^m,
  \end{equation*}
  which proves the claim.

  Observe further that multiplying Equation \eqref{eqn:a_b} by $\sum_{n=0}^\infty a_n t^n$
  and comparing coefficients yields that
  \begin{equation}
    \label{eqn:a_b_recursion}
    \sum_{n=1}^m b_n a_{m-n} = a_m
  \end{equation}
  for $m \ge 1$.

  With these two observations in hand, the remainder of the proof is now a straightforward computation. Indeed,
  to prove (a), recall that $\cH_{\cI}$ consists of all
  polynomials of degree at most $N$. If $p$ is a homogeneous polynomial of degree $m \le N$, then
  \begin{equation*}
    S^\alpha (S^*)^\alpha p =
    \begin{cases}
      M_z^\alpha (M_z^*)^\alpha p  & \text{ if } m \ge  |\alpha|, \\
      0  & \text{ if } m < |\alpha|,
    \end{cases}
  \end{equation*}
  so that if $1 \le m \le N$, then
  \begin{equation*}
    \sum_{n=1}^N b_n \sum_{|\alpha|=n} \binom{n}{\alpha} S^\alpha (S^*)^\alpha p
    = \sum_{n=1}^m b_n \sum_{|\alpha| = n} \binom{n}{\alpha} M_z^\alpha (M_z^*)^\alpha p
    = \sum_{n=1}^m b_n \frac{a_{m-n}}{{a_m}} p = p,
  \end{equation*}
  by Equations \eqref{eqn:technical_1} and \eqref{eqn:a_b_recursion}.
  If the degree $m$ of $p$ is equal to $0$, then the left-hand side
  is equal to $0$, so that
  \begin{equation*}
    1/k(S,S^*) = I - \sum_{n=1}^N b_n \sum_{|\alpha| = n} \binom{n}{\alpha} S^\alpha (S^*)^\alpha
  \end{equation*}
  is the orthogonal projection onto the one-dimensional space $\bC 1$. In particular,
  $1/k(S,S^*) \ge 0$.

  To prove (b), observe that $(\cH_{I})_0$ is spanned by the homogeneous polynomials of degree
  $1 \le m \le N$. Moreover, if $p$ is a homogeneous polynomial of degree $1\leq m\leq N$, then
  \begin{equation*}
    T^\alpha (T^*)^\alpha p = S^\alpha P_{(\cH_\cI)_0} (S^{*})^\alpha p=
    \begin{cases}
      M_z^\alpha (M_z^*)^\alpha p  & \text{ if } m \ge |\alpha| + 1, \\
      0  & \text{ if } m < |\alpha| + 1.
    \end{cases}
  \end{equation*}
  Consequently,
  \begin{align*}
    ( I-1/k(T,T^*))p&=\sum_{n=1}^N b_n \sum_{|\alpha| = n} \binom{n}{\alpha} T^\alpha (T^*)^\alpha p\\
    &= \sum_{n=1}^{m-1} b_n \sum_{|\alpha|=n} \binom{n}{\alpha} M_z^\alpha (M_{z}^*)^\alpha p \\
    &= \sum_{n=1}^{m-1} b_n \frac{a_{m-n}}{a_m} p
    = \frac{a_m- b_m}{a_m} p,
  \end{align*}
  where we have again used Equations \eqref{eqn:technical_1} and \eqref{eqn:a_b_recursion}. This identity proves (b).
\end{proof}

With these preliminary observations in hand, we can now show that the last statement of Theorem \ref{T:rep_A_I} characterizes
those unitarily invariant spaces which have the complete Nevanlinna-Pick property.

\begin{proposition}
  \label{prop:coext_char_ideal}
  Let $\cH$ be a unitarily invariant space on the unit ball such that the polynomials
  are multipliers. Suppose that for every proper homogeneous ideal
  of polynomials $\cI$, every
  unital completely contractive representation of $A(\cH_{\cI})$ admits a $C^*(A(\cH_{\cI}))$-coextension.
  Then $\cH$ is an irreducible complete Nevanlinna-Pick space.
\end{proposition}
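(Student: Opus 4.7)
The plan is to apply Lemma \ref{lem:1/k_coext_NP_char} with the ideals $\cI_N = \langle z_1,\ldots,z_d\rangle^{N+1}$ for each $N\in\bN$ in order to force the coefficients $b_n$ defined by \eqref{eqn:a_b} to satisfy $b_n \ge 0$ for every $n \ge 1$. Once this is known, Lemma \ref{L:NP-char} will deliver the conclusion. To apply that lemma one also needs the expansion $1 - 1/k(z,w) = \sum_n b_n \langle z,w\rangle^n$ to be valid on all of $\bB_d \times \bB_d$, but positivity of the $b_n$ makes this routine: for $t\in[0,1)$ one has $\sum_n a_n t^n \ge 1$ so $\sum_n b_n t^n = 1 - 1/\sum_n a_n t^n < 1$, which forces the radius of convergence of $\sum_n b_n t^n$ to be at least $1$, and analytic continuation does the rest.

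Fix $N \ge 1$ and set $\cI = \cI_N$, $S = S^\cI$. By Lemma \ref{lem:1/k_coext_NP_char}, $\cH_\cI$ consists of all polynomials of degree at most $N$, and the subspace $(\cH_\cI)_0$ spanned by homogeneous polynomials of degree $1$ through $N$ is \emph{invariant} for every $S_j$: indeed $M_{z_j}$ raises homogeneous degree by one, and projecting onto $\cH_\cI$ only kills a possible degree $N+1$ component. Restriction therefore gives a unital completely contractive homomorphism $\rho: A(\cH_\cI) \to B((\cH_\cI)_0)$ sending $S$ to $T := S \big|_{(\cH_\cI)_0}$. By hypothesis, $\rho$ coextends to a $*$-representation $\pi: C^*(A(\cH_\cI)) \to B(\cL)$ for some Hilbert space $\cL$ containing $(\cH_\cI)_0$.

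The heart of the argument is the compression step. By Lemma \ref{lem:1/k_coext_NP_char}(a), $1/k(S,S^*) \ge 0$ as an element of $C^*(A(\cH_\cI))$, and this expression is a \emph{finite} sum thanks to the nilpotency of $S$. Applying the $*$-homomorphism $\pi$ preserves positivity, so $1/k(\pi(S),\pi(S)^*) \ge 0$ on $\cL$. Now coinvariance of $(\cH_\cI)_0$ for $\pi(A(\cH_\cI))$ means that $\pi(S)^*$ leaves $(\cH_\cI)_0$ invariant, and hence $\pi(S)^{*\alpha}\big|_{(\cH_\cI)_0} = (T^*)^\alpha$. Consequently
\[
P_{(\cH_\cI)_0} \pi(S)^\alpha \pi(S)^{*\alpha} P_{(\cH_\cI)_0} = T^\alpha (T^*)^\alpha,
\]
and compressing the finite-sum inequality $1/k(\pi(S),\pi(S)^*) \ge 0$ down to $(\cH_\cI)_0$ yields exactly $1/k(T,T^*) \ge 0$. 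Finally, Lemma \ref{lem:1/k_coext_NP_char}(b) identifies $1/k(T,T^*)$ as a diagonal operator whose eigenvalue on the space of homogeneous polynomials of degree $m$ is $b_m / a_m$. Non-negativity together with $a_m > 0$ forces $b_m \ge 0$ for $1 \le m \le N$; letting $N \to \infty$ completes the proof.

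The main point of care, and the reason the hypothesis is phrased in terms of coextensions rather than mere dilations, is the compression step. For a generic dilation, the compression of a product $A A^*$ to a coinvariant subspace is \emph{not} the product of compressions, so the finite sum defining $1/k(\pi(S),\pi(S)^*)$ would not compress cleanly. What saves us is precisely that a coextension gives invariance of $(\cH_\cI)_0$ under each $\pi(S_j)^*$, which makes $\pi(S)^\alpha \pi(S)^{*\alpha}$ compress to $T^\alpha (T^*)^\alpha$ on the nose. Example \ref{E:Hardy_Bergman} already illustrates that without this extra structure the analogous statement fails.
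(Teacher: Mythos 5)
Your proof is correct and follows essentially the same route as the paper's: apply the hypothesis to the ideals $\langle z_1,\ldots,z_d\rangle^{N+1}$, use the coextension to compress the finite-sum inequality $1/k(S,S^*)\ge 0$ of Lemma \ref{lem:1/k_coext_NP_char}(a) termwise (exactly, thanks to coinvariance) to obtain $1/k(T,T^*)\ge 0$ on $(\cH_\cI)_0$, and then read off $b_m\ge 0$ from part (b) before letting $N\to\infty$ and invoking Lemma \ref{L:NP-char}. Only your side remark on the radius of convergence of $\sum_n b_n t^n$ is circular as phrased (you evaluate the identity at $t\in[0,1)$ before knowing the series converges there); it is cleaner to note that once $b_n\ge 0$ is known, the recursion \eqref{eqn:a_b_recursion} gives $0\le b_m\le a_m$, so the series converges on $\bB_d$ and the identity $1-1/k(z,w)=\sum_{n=1}^\infty b_n\langle z,w\rangle^n$ extends there by the Cauchy product.
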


\begin{proof}
  We apply the assumption to the ideals $\cI = \langle z_1,\ldots,z_d \rangle^{N+1}$ for $N \in \bN$.
  Let $S = S^{\cI}$, let $(\cH_\cI)_0$ be the space of all functions
  in $\cH_\cI$ which vanish at $0$ and let $T = S \big|_{(\cH_\cI)_0}$.   Part (a) of Lemma \ref{lem:1/k_coext_NP_char} shows that $1/k(S,S^*) \ge 0$. Now, by assumption, the unital completely contractive representation
  \begin{equation*}
    A(\cH_\cI) \to B( (\cH_\cI)_0), \quad M_\varphi \mapsto M_\varphi \big|_{(\cH_\cI)_0},
  \end{equation*}
  admits a $C^*(A(\cH_\cI))$-coextension. Hence we must also have $1/k(T,T^*) \ge 0$. By part (b) of Lemma \ref{lem:1/k_coext_NP_char}, this forces
  $b_n \ge 0$ for $1 \le n \le N$, since $a_n > 0$. Since $N\in \bN$ was arbitrary, we conclude that $\cH$ is an irreducible complete Nevanlinna-Pick space
  by Lemma \ref{L:NP-char}.
\end{proof}

If we assume in addition that $\cH$ is regular, then it suffices to consider $A(\cH)$ itself.
Recall that for a regular unitarily invariant space, the polynomials are automatically multipliers.

\begin{theorem}
  \label{thm:coext_char_regular}
  Let $\cH$ be a regular unitarily invariant space on $\bB_d$. If every unital completely
  contractive representation of $A(\cH)$ admits a $C^*(A(\cH))$-coextension, then
  $\cH$ is an irreducible complete Nevanlinna-Pick space.
\end{theorem}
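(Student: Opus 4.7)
The overall plan is to reduce to Proposition \ref{prop:coext_char_ideal}. That proposition concludes $\cH$ is an irreducible complete Nevanlinna-Pick space from the coextension property for all algebras $A(\cH_\cJ)$, but its proof invokes only the ideals $\cI = \langle z_1,\ldots,z_d \rangle^{N+1}$ for $N \in \bN$, and only for the specific representation $\rho_N : A(\cH_\cI) \to B((\cH_\cI)_0)$ obtained by restriction to functions vanishing at the origin. It therefore suffices, for each such $N$, to construct a $C^*(A(\cH_\cI))$-coextension of $\rho_N$ from the hypothesis on $A(\cH)$; once this is done, the proof of Proposition \ref{prop:coext_char_ideal} applies verbatim to conclude $b_m \ge 0$ for $1 \le m \le N$, and letting $N$ vary yields $b_m \ge 0$ for all $m \ge 1$, so that $\cH$ is an irreducible complete Nevanlinna-Pick space by Lemma \ref{L:NP-char}.

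Fix $N \in \bN$, set $\cI = \langle z_1,\ldots,z_d\rangle^{N+1}$, and compose $\rho_N$ with the unital completely contractive compression $q: A(\cH) \to A(\cH_\cI)$, $M_\varphi \mapsto P_{\cH_\cI} M_\varphi|_{\cH_\cI}$, to obtain a unital completely contractive representation $\tilde\rho_N: A(\cH) \to B((\cH_\cI)_0)$. By hypothesis, $\tilde\rho_N$ coextends to a $*$-representation $\pi: C^*(A(\cH)) \to B(\cL)$, with $(\cH_\cI)_0$ identified with a coinvariant subspace $\cE \subset \cL$. Regularity now enters: by Theorem \ref{T:essentiallynormalan} and Lemma \ref{lem:rep_split}, we decompose $\pi = \pi_1 \oplus \pi_2$ on $\cL = \cH^{(\kappa)} \oplus \cL_2$, where $\pi_1$ is a multiple of the identity representation and $\pi_2(M_z) = U$ is a spherical unitary. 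The crux is to show $\cE \subset \cH_\cI^{(\kappa)}$. For every $p \in \cI$ one has $p(S^\cI) = 0$ and hence $\tilde\rho_N(M_p) = 0$, so coinvariance forces $\pi(M_p)^* x = 0$ for every $x = (x_1,x_2) \in \cE$. The $\pi_1$-component yields $x_1 \in \bigcap_{p \in \cI} \ker (M_p^{(\kappa)})^* = \cH_\cI^{(\kappa)}$. For the $\pi_2$-component, $U^{*\alpha} x_2 = 0$ for every $|\alpha| = N+1$, so summing against multinomial coefficients and using that $U$ is a doubly commuting tuple of normals with $\sum_{j=1}^d U_j U_j^* = I$ gives
\begin{equation*}
  \|x_2\|^2 = \sum_{|\alpha|=N+1} \binom{N+1}{\alpha} \|U^{*\alpha} x_2\|^2 = 0.
\end{equation*}

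Since $\cH_\cI$ is finite-dimensional with $A(\cH_\cI)$ irreducible (Lemma \ref{L:irredideal}), $C^*(A(\cH_\cI)) = B(\cH_\cI)$, and the amplification $\sigma: B(\cH_\cI) \to B(\cH_\cI^{(\kappa)})$, $X \mapsto X \otimes I_\cK$, is a $*$-representation. Because $\cH_\cI$ is coinvariant for $M_z$, we have $M_z^{(\kappa)*}|_{\cH_\cI^{(\kappa)}} = \sigma(S^\cI)^*$, and so the subspace $\cE \subset \cH_\cI^{(\kappa)}$ is coinvariant for $\sigma(A(\cH_\cI))$ with $\sigma(S^\cI)^*|_\cE = \pi(M_z)^*|_\cE = \rho_N(S^\cI)^*$. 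Hence $\sigma$ is the desired $C^*(A(\cH_\cI))$-coextension of $\rho_N$, and the proof is complete. The main obstacle is precisely the containment $\cE \subset \cH_\cI^{(\kappa)}$: the choice $\cI = \langle z_1,\ldots,z_d\rangle^{N+1}$, for which $V(\cI) \cap \partial \bB_d = \emptyset$, is essential in forcing the spherical unitary component of $\pi$ to vanish on $\cE$, while regularity is essential through Theorem \ref{T:essentiallynormalan} for the decomposition $\pi = \pi_1 \oplus \pi_2$ to be available.
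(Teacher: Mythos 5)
Your proposal is correct and follows essentially the same route as the paper's proof: compress to $(\cH_\cI)_0$ for $\cI = \langle z_1,\ldots,z_d\rangle^{N+1}$, split the hypothesized coextension via Theorem \ref{T:essentiallynormalan} and Lemma \ref{lem:rep_split}, use nilpotency at level $N+1$ to annihilate the spherical unitary summand and force the copy of $(\cH_\cI)_0$ into $\cH_\cI^{(\kappa)}$, and then conclude via Lemma \ref{lem:1/k_coext_NP_char} as in Proposition \ref{prop:coext_char_ideal}. Your repackaging of the endgame as an explicit $C^*(A(\cH_\cI))$-coextension (using $C^*(A(\cH_\cI)) = B(\cH_\cI)$ and the amplification) is just a cosmetic variant of the paper's statement that $T$ coextends to $S^{\cI}\otimes I_{\cE}$.
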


\begin{proof}
Let $N \in \bN$ and let $\cI = \langle z_1,\ldots,z_d \rangle^{N+1} \subset \bC[z_1,\ldots,z_d]$. 
As before, let $S = S^{\cI}$, let $(\cH_\cI)_0$ be the space of all functions
  in $\cH_\cI$ which vanish at $0$ and let 
  \[
  T = P_{(\cH_\cI)_0}M_z \big|_{(\cH_\cI)_0}.
  \]
  Since $(\cH_{\cI})_0$ is semi-invariant for $A(\cH)$, the map
    \begin{equation*}
      A(\cH) \to B((\cH_\cI)_0), \quad M_\varphi \mapsto P_{(\cH_\cI)_0} M_\varphi \big|_{(\cH_\cI)_0},
  \end{equation*}
  is a unital completely contractive representation, and thus it admits a $C^*(A(\cH))$-coextension $\pi$. Since $\cH$ is regular, Theorem \ref{T:essentiallynormalan}
  and Lemma \ref{lem:rep_split} show that
  $\pi = \pi_1 \oplus \pi_2$, where $\pi_1$ is unitarily equivalent to
  a multiple of the identity representation and $\pi_2(M_z)$ is a spherical unitary.
  Consequently, there exist Hilbert spaces $\cE$ and $ \cL$, a spherical unitary $U=(U_1,\ldots, U_d)$ on  $\cL$ and an isometry
  \begin{equation*}
    V = \begin{bmatrix} V_1 \\ V_2 \end{bmatrix}: (\cH_\cI)_0\to (\cH \otimes \cE) \oplus \cL
  \end{equation*}
such that
  \begin{equation*}
    V^*  \big[ (M_{z_j} \otimes I_{\cE}) \oplus U_j \big] = T_j V^*,
  \end{equation*}
  and hence
  \begin{align*}
    V_1^* (M_{z_j} \otimes I_{\cE}) = T_j V_1^* \quad \text{ and } \quad
    V_2^* U_j = T_j V_2^*
  \end{align*}
  for $j=1,\ldots,d$. Since $T^\alpha = 0$ if $|\alpha| = N+1$, we deduce that
  \begin{equation*}
    V_2^* V_2 =
    V_2^* \sum_{|\alpha|={N+1}} \binom{N+1}{\alpha} U^\alpha (U^*)^\alpha V_2
    = \sum_{|\alpha|=N+1} \binom{N+1}{\alpha} T^\alpha V_2^* V_2 (T^\alpha)^* = 0,
  \end{equation*}
  hence $V_2 = 0$.
  Furthermore, if $p \in \cI$ then $p(T) = 0$, so arguing as in the proof of Theorem \ref{T:AEM_ideal},
  it follows  that the range of $V_1$ is contained in $\cH_\cI \otimes \cE$. Consequently, $V$ maps
  $(\cH_\cI)_0$ into $(\cH_{\cI}) \otimes \cE$, and $T$ coextends to $S^{\cI} \otimes I_{\cE}$.
  As in the proof of Proposition \ref{prop:coext_char_ideal}, an
  application of Lemma \ref{lem:1/k_coext_NP_char} now shows that $\cH$ is an irreducible
  complete Nevanlinna-Pick space.
\end{proof}

\bibliographystyle{amsplain}
\bibliography{literature}

\end{document}